\newcommand\de{\delta}
\newcommand{\Sig}{{\bf \Sigma}}
\newcommand{\SQ}{{\bf \Sigma}_q}
\newcommand{\SQS}{{\bf \Sigma}_q^\star}
\newcommand{\comment}[1]{}
\newtheorem{theorem}{Theorem}
\newtheorem{definition}[theorem]{Definition}
\newtheorem{proposition}[theorem]{Proposition}
\newtheorem{lemma}[theorem]{Lemma}
\newtheorem{remark}[theorem]{Remark}
\newcommand\e{\varepsilon}
\newcommand\al{\alpha}
\newcommand{\NN}{\mathbb N}
\newcommand{\ZZ}{\mathbb Z}
\newcommand{\RR}{\mathbb R}
\newcommand{\CC}{\mathbb C}
\newcommand{\TT}{\mathbb T}
\newcommand{\GG}{\mathbb{G}_q}
\newcommand{\GS}{\mathbb{G}_q^{\star}}
\newcommand\PP{\mathcal{P}}
\newcounter{rev}
\newcounter{reb}
\begin{document}

\title[Integral concentration of idempotents]{Integral Concentration of idempotent trigonometric polynomials with gaps }

\author{Aline Bonami \& Szil\'{a}rd Gy. R\'{e}v\'{e}sz}

\date{\today}

\address[Aline Bonami]{
\newline\indent F\'ed\'eration Denis Poisson
\newline\indent MAPMO-UMR 6628 CNRS
\newline\indent Universit\'e d'Orl\'eans
\newline\indent 45067 Orl\'eans France.}
\email{aline.bonami@univ-orleans.fr}

\address[Szil\'ard Gy. R\'ev\'esz]{
\newline \indent A. R\'enyi Institute of Mathematics
\newline \indent Hungarian Academy of Sciences,
\newline \indent Budapest, P.O.B. 127, 1364 Hungary.}
\email{revesz@renyi.hu}

\comment{
\address{\hskip 1cm and
\newline \indent Institut Henri Poincar\'e,
\newline \indent 11 rue Pierre et Marie Curie, 75005 Paris, France}
\email{revesz@ihp.jussieu.fr}
}

\begin{abstract} We prove that for all $p>1/2$ there exists a
constant  $\gamma_p>0$ such that, for any symmetric measurable
set of positive measure $E\subset \TT$ and for any
$\gamma<\gamma_p$, there is an idempotent trigonometrical
polynomial $f$ satisfying $\int_E |f|^p
> \gamma \int_{\TT} |f|^p$. This disproves a conjecture of
Anderson, Ash, Jones, Rider and Saffari, who proved the existence
of $\gamma_p>0$ for $p>1$ and conjectured that it does not exists
for $p=1$.

Furthermore, we prove  that one can take $\gamma_p=1$ when $p>1$
is not an even integer, and that polynomials $f$ can be chosen
with arbitrarily large gaps when $p\neq 2$. This shows  striking
differences with the case $p=2$, for which the best constant is
strictly smaller than $1/2$, as it has been known for twenty
years, and for which having arbitrarily large gaps with such
concentration of the integral is not possible, according to a
classical theorem of Wiener.

We find sharper results for $0<p\leq 1$ when we restrict to open
sets, or when we enlarge the class of idempotent trigonometric
polynomials to all  positive definite ones.
\end{abstract}

\maketitle \vskip1em \noindent{\small \textbf{Mathematics Subject
Classification (2000):} Primary 42A05. Secondary 42A16, 42A61,
42A55, 42A82, 42B05.\\[1em]
\textbf{Keywords:} idempotent exponential polynomials, $L^p$-norm
concentration,  Hardy-Littlewood majorant problem, Montgomery
conjecture, Ingham Inequality, positive definite trigonometric
polynomials, inhomogeneous Diophantine approximation,
Marcinkiewicz-Zygmund Inequality, Bernstein's Inequality, random
trigonometric polynomials.}


\let\oldfootnote\thefootnote
\def\thefootnote{}

\comment{\footnotetext{Supported in part in the framework of the
Hungarian-French Scientific and Technological Governmental
Cooperation, Project \# F-10/04.}}

\footnotetext{The second author was supported in part by the
Hungarian National Foundation for Scientific Research, Project \#s
T-049301 T-049693 K-61908 and K-72731.} \footnotetext{This work
was accomplished during the second author's stay at the Institut
Henri Poincar\'e in Paris under his Marie Curie fellowship,
contract \# MEIF-CT-2005-022927.}
\let\thefootnote\oldfootnote

\newpage

\section{Introduction}\label{sec:intro}

In this work $\TT:=\RR/\ZZ$ is the circle, and $e(t):=e^{2\pi i
t}$ is the usual exponential function adjusted to interval length
$1$. We will denote $e_h$ the function $e(hx)$. For obvious
reasons of being convolution idempotents, the set
\begin{equation}\label{eq:idempotents}
\PP:=\left\{ \sum_{h\in H}e_h ~:~ H\subset \NN, ~ \# H< \infty
\right\}
\end{equation}
is called the set of \emph{(convolution-)idempotent exponential
(or trigonometric) polynomials}, or just \emph{idempotents} for
short.

Remark that we assume all frequencies of idempotents under
consideration  to be non-negative. We can do without loss of
generality since we will only be interested in the modulus of
idempotents, which is not modified by multiplication by some
exponential $e_N$. We will denote as well
\begin{equation}\label{eq:all} \mathcal{T}:=\left\{ \sum_{h\in
H}a_he_h ~:~ H\subset \NN, ~ \# H< \infty \, ; \ a_h \in
\mathbb{C}, \ h\in H \ \right\}
\end{equation}
the space of all trigonometric polynomials.
\smallskip

The starting point of our work was a conjecture in \cite{Many}
regarding the impossibility of the concentration of the integral
norm of idempotents.

Before recording the main result of the paper \cite{Many}, let us
give some notations and definitions. We first start by the notion
of concentration on symmetric open sets, for which results are
more complete, and proofs are more elementary.

A set $E$ is symmetric if  $x\in E$ implies $- x \in E$.

\begin{definition}
Let $p>0$ and $a\in\TT$. We say that there is  $p$-concentration
at $a$ if there exists a constant $c>0$ so that for any symmetric
open set $E$ that contains $a$, one can find an idempotent
$f\in\PP$ with
\begin{equation}\label{eq:Lpconcentration_a}
 \int_E |f|^p
\geq c  \int_\TT |f|^p.
\end{equation}
Moreover, the supremum of all such constants $c$ will be denoted
as $c_p(a)$: it is called the \emph{level of the $p$-concentration
at $a$}. Such an idempotent $f$ will be called a \emph {
$p$-concentrating polynomial}.
\end{definition}

\begin{definition}
Let $p>0$. We say that there is  $p$-concentration  if  there
exists a constant $c>0$ so that for any symmetric non empty open
set $E$ one can find an idempotent $f\in\PP$ with
\begin{equation}\label{eq:Lpconcentration}
\int_E |f|^p \geq c \int_\TT |f|^p.
\end{equation}
Moreover, $c_p$ will denote the supremum of all such constants
$c$. Correspondingly, $c_p$ is called the \emph{level of
$p$-concentration}. If $c_p=1$, we say that there is \emph{full
$p$-concentration}.
\end{definition}

Clearly, as remarked in \cite{DPQ}, the local constant $c_p(a)$ is
an upper semi-continuous function on $\TT$, and
$c_p=\inf_{a\in\TT}c_p(a)$.

\begin{remark}
We have taken symmetric open sets because the function $|f|$ is
even for $f\in\PP$. Without the assumption of symmetry, the
constant $c_p(a)$ would be at most $1/2$ for $a$ different from
$0$ and $1/2$. With this definition, as we will see, $c_p(a)$ and
even $c_p$ can achieve the maximal value $1$. Nevertheless, using
the alternative definition with arbitrary open sets (or just
intervals) would only mean taking half of our constants $c_p(a)$
for $a\ne 0, 1/2$ and of $c_p$.
\end{remark}

The question of $p$-concentration, and the computation or at least
estimation of the best constant $c_p$, originated from the work of
Cowling \cite{Cow}, and of Ash \cite{Ash0} on comparison of
restricted type and strong type for convolution operators. This is
described recently in the survey \cite{Ash}. It has since then
been the object of considerable interest, with improving lower
bounds obtained by Pichorides, Montgomery, Kahane and Ash, Jones
and Saffari, see \cite{first, CRMany, Many} for details. In 1983
D\'echamps-Gondim, Piquard-Lust and Queff\'elec \cite{DPQ, DPQ2}
answered a question from \cite{first}, proving the precise value
\begin{equation}\label{case2}
c_2=\sup_{0\leq x}\frac {2\sin^2 x}{\pi x}=0.46\cdots.
\end{equation}
Moreover, they obtained $c_p\geq 2^{1-\frac p2}c_2^{p/2}$ for all
$p>2$.

As in \cite{DPQ, DPQ2, CRMany, Many}, we will consider the same
notion of $p$-concentration of (convolution-)idempotents for
measurable sets, too.

\begin{definition}
Let $p>0$ and $a\in\TT$. We say that there is  $p$-concentration
\emph{for measurable sets at $a$}, if there exists a constant
$\gamma>0$ so that for any symmetric measurable set $E$, with $a$
being a density point of $E$, there exists some idempotent
$f\in\PP$ with
\begin{equation}\label{eq:Lpconcentration-meas}
\int_E |f|^p \geq \gamma \int_\TT |f|^p.
\end{equation}
The supremum of all such constants $\gamma$ will be denoted as
$\gamma_p(a)$. Furthermore, we say that there is $p$-concentration
\emph{for measurable sets} if such an inequality holds for any
symmetric measurable set $E$ of positive measure. The supremum of
all such constants is denoted by $\gamma_p$.
\end{definition}
It is clear that $p$-concentration for measurable sets implies
$p$-concentra\-tion. On the other hand it is not clear, if
$\gamma_p(a)$ is upper semicontinuous, too. If we knew this, by
our methods that would easily imply the same strength of the
results for measurable sets, as we will obtain for open sets.

The main theorem of \cite{Many} can be stated as:
\begin{theorem}[{\bf Anderson, Ash, Jones, Rider,
Saffari}]\label{th:largepconcentration} There is
$p$-concentration  for measurable sets for all $p>1$.
\end{theorem}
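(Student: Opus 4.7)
The plan is to bootstrap from the already established $p$-concentration for symmetric open sets (due to the earlier work of Pichorides, Montgomery, Kahane, Ash--Jones--Saffari etc., which yields an absolute constant $c_p>0$ for every $p>1$) to $p$-concentration for measurable sets, via a Lebesgue density argument. Given a symmetric measurable $E\subset\TT$ with $|E|>0$, almost every point of $E$ is a Lebesgue density point; choose one $a\in E\setminus\{0,\pm 1/2\}$. For any prescribed $\eta>0$ there exists $\de=\de(\eta,a)>0$ such that the symmetric window
\[
J_\de:=(a-\de,a+\de)\cup(-a-\de,-a+\de)
\]
satisfies $|J_\de\cap E|\geq (1-\eta)|J_\de|$.

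Next, I invoke the known $p$-concentration applied to the open set $J_\de$: there is an idempotent $f\in\PP$ of some degree $N$ with
\[
\int_{J_\de}|f|^p\geq (c_p-\eps)\int_\TT|f|^p.
\]
For the estimates below to close, the degree $N$ must be coupled to $\de$ by a uniform bound $N\de\leq K_p$. I would arrange this by taking a universal idempotent ``concentrator'' $f_0$ at the origin (fixed degree $N_0$, concentration on $(-\de_0,\de_0)$ at level $c_p-\eps$), rescaling it as $f(x):=f_0(Mx)$ (which remains in $\PP$, with degree $MN_0$), and choosing $M\in\NN$ via an inhomogeneous Diophantine approximation of $a$ so that some grid point $k/M$ lies well inside $(a-\de,a+\de)$. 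Since $|f_0(Mx)|$ is even, the bump near $a$ is paired with a symmetric bump near $-a$, and both lie inside $J_\de$.

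The decisive tool is then Nikolskii's inequality: for a trigonometric polynomial $g$ of degree $\leq N$ and $p\geq 1$,
\[
\Linf{g}\leq C_p\,N^{1/p}\Bigl(\int_\TT|g|^p\Bigr)^{1/p}.
\]
This controls the $L^p$ leakage from $J_\de$ to the bad subset $J_\de\setminus E$, whose measure is at most $4\eta\de$:
\[
\int_{J_\de\setminus E}|f|^p\leq \Linf{f}^{\,p}\cdot|J_\de\setminus E|\leq C_p^p N\cdot 4\eta\de\cdot\int_\TT|f|^p\leq 4C_p^p K_p\,\eta\int_\TT|f|^p.
\]
Subtracting this from the lower bound on $\int_{J_\de}|f|^p$ gives
\[
\int_E|f|^p\geq\bigl(c_p-\eps-4C_p^p K_p\eta\bigr)\int_\TT|f|^p,
\]
and choosing $\eps,\eta$ small enough (independently of $E$) yields $\gamma_p\geq c_p/2>0$, finishing the proof.

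I expect the main technical obstacle to be precisely the degree--scale coupling $N\de\leq K_p$ paired with the relocation of the concentration from the origin to the arbitrary point $a$. The rescaling $x\mapsto Mx$ produces concentration periodically on the grid $\{k/M\}_{k=0}^{M-1}$, and one must select $M$ (Dirichlet-style) so that some grid point falls within the inner part of $(a-\de,a+\de)$, with a uniform control independent of the Diophantine type of $a$. Once this calibration is in place, the combination of Nikolskii, the Lebesgue density property, and open-set concentration is the short computation sketched above.
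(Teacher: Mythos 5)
Your plan has a genuine gap at the very step you flag as the ``main technical obstacle'': the rescaling $f(x):=f_0(Mx)$ cannot deliver concentration near a single point $a$. The function $f_0(M\cdot)$ is $\tfrac1M$-periodic, so $|f|^p$ consists of $M$ \emph{identical} bumps, one at every grid point $k/M$, and the $L^p$ mass sitting near any one of them is about $\tfrac1M\int_\TT|f|^p$ rather than $(c_p-\eps)\int_\TT|f|^p$. Since the Lebesgue density argument forces $\de\to 0$, and your degree--scale coupling $N\de\le K_p$ with $N=MN_0$ then forces $M\to\infty$, the concentration at $a$ degrades like $c_p/M\to 0$ and the final bound $\gamma_p\ge c_p/2$ evaporates. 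Note that translating $f_0$ to peak at $a$ is not an option either: $f_0(\cdot-a)$ has coefficients $e(-ka)$, so it leaves the idempotent class $\PP$. This is exactly the obstruction that makes the problem nontrivial.

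What Anderson--Ash--Jones--Rider--Saffari (and, in sharper form, the present paper) do is multiply the rescaled localizer by a \emph{second} idempotent $R(x)$ whose discrete $\ell^p$ mass over the grid $\{k/q\}_{k=0}^{q-1}$ (or the shifted grid $\{(2k+1)/(2q)\}$) is concentrated at the single point $a/q$; one reduces to $a=1$ by the substitution $x\mapsto bx$ with $ab\equiv 1\ (\mathrm{mod}\ q)$, and produces such an $R$ from products of Dirichlet kernels. The product $S(x)=R(x)\,T(qx)$ (with $T$ the peaking idempotent, possibly with large gaps) then concentrates near $a/q$ alone, at a level governed by the discrete constants $c_p^\sharp(q)$ or $c_p^\star(q)$. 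The passage from open sets to measurable sets in the paper further replaces pigeonhole placement of $k/q$ by Khintchine--Sz\"usz--Schmidt inhomogeneous Diophantine approximation (so that $a$ is within $\theta/q^2$ of a reduced fraction with denominator $q$) and controls the error via Marcinkiewicz--Zygmund and Bernstein inequalities on the grid. Your Nikolskii step is sound as far as it goes, but it sits on top of a concentration estimate you have not actually produced; without the factor $R$ there is no way to calibrate $M$ against $\de$ to keep the concentration bounded below.
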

We also refer to them for the fact that $\gamma_2=c_2$ is given by
\eqref{case2}. The proof of \cite{ CRMany,Many} is based on the
properties of the function
\begin{equation}\label{eq:manykernel}
D_n(x) D_n(qx),
\end{equation}
where $D_n$ stands for the Dirichlet kernel. We will use the same
notation as  in \cite{Many} and define the Dirichlet kernel as
\begin{equation}\label{eq:Dndef}
D_n(x):=\sum_{\nu=0}^{n-1} e(\nu x) = e^{\pi i(n-1)x}
\frac{\sin(\pi n x)}{\sin(\pi x)}.
\end{equation}
The idea is that the first Dirichlet kernel in
\eqref{eq:manykernel} will have sufficiently peaky behavior
(regarding $|\cdot|^p$), while the second one simulates a Dirac
delta, so that the $p$-th integral outside  very close
neighborhoods of the points $k/q$ is small. They use the
multiplicative group structure of $\ZZ/q\ZZ$, when $q$ is prime,
to prove that concentration at $k/q$ and concentration at $1/q$
may be compared.

Their proof yields $p$-concentration only with $c_p\to 0$ when
$p\to1$.  Based on these and some other heuristical arguments and
calculations the authors conjectured that for $p$-concentration
the value $1$ should be a natural limit.
 We will disprove this conjecture, even for measurable sets and we will even prove more: all
concentrating idempotents can be taken with arbitrary large gaps.
Recall that the trigonometric polynomial
\begin{equation}\label{eq:gapquestion} f(x):=\sum_{k=1}^{K} a_k e(n_k
x),
\end{equation}
has gaps larger than $N$ if it satisfies the gap condition
$n_{k+1}-n_k>N$ ($k=1,\dots,K-1)$. Before describing our results
more precisely, we need other definitions.
\begin{definition} We say that there is $p$-concentration with gap
(resp. $p$- concentration with gap for measurable sets)  at $a$ if
for all $N>0$  the $p$-concentrating polynomial in
\eqref{eq:Lpconcentration_a} (resp. in
\eqref{eq:Lpconcentration-meas}) can be chosen with gap larger
than $N$. If this holds for every $a$, we say that there is
$p$-concentration with gap (resp. $p$-concentration with gap for
measurable sets). If, moreover, the constant $c$ can be taken
arbitrarily close to $1$, we say that there is full
$p$-concentration with gap (resp. $p$-concentration with gap for
measurable sets).
\end{definition}

With these definitions, we can give our main theorems.
\begin{theorem}\label{th:concentration} For all $0<p<\infty$ we
have $p$-concentration. Moreover, if $p$ is not an even integer,
then we have full concentration, i.e. $c_p=1$. When considering
even integers, we have $c_2$ given by \eqref{case2}, then
$0.495<c_4\leq 1/2$, then for all other even integers
$0.483<c_{2k}\leq 1/2$. Moreover, unless $p=2$, we have
concentration with gap at the same level of concentration. On the
other hand for $p=2$ requiring arbitrarily large gaps would
decrease the level of concentration to $0$.
\end{theorem}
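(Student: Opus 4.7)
The statement of Theorem~\ref{th:concentration} bundles several independent claims, so the plan decomposes into four parts: (i) construction of full $p$-concentration for $p$ not an even integer, (ii) propagation of this concentration to arbitrarily large gaps when $p\neq 2$, (iii) the upper bounds $c_{2k}\le 1/2$ together with the positive lower bounds in the even-integer cases, and (iv) the obstruction at $p=2$ that kills gap-concentration.

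For (i), first I would construct an idempotent $f\in\PP$ whose modulus approximates a positive-definite bump $\varphi$ (obtainable as a convolution of Fej\'er-type kernels) concentrated near the origin. The concentration at $0$ is then transferred to any symmetric open neighborhood of a density point of $E$: for rational points $a=r/q$ with $\gcd(r,q)=1$, the dilation trick of \cite{Many} based on the multiplicative structure of $\ZZ/q\ZZ$ does the job, and $D_n(x)D_n(qx)$-type kernels produce peaks simultaneously at all rationals $k/q$, so that a single construction covers a dense set of targets with a uniform lower bound on $c_p(a)$. Passage from symmetric open sets to arbitrary symmetric measurable $E$ uses the Lebesgue density theorem: concentrate $f$ so tightly in a symmetric neighborhood of a pair $\pm a$ of density points of $E$ that only negligible $L^p$ mass escapes $E$. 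The upgrade from $c_p>0$ to the full concentration $c_p=1$ for non-even $p$ rests on the failure of the Hardy--Littlewood majorant property at such $p$: starting from a non-negative trigonometric polynomial $g$ with $\widehat g\ge 0$, a Marcinkiewicz--Zygmund-type discretization (with extra care needed for $0<p\le 1$) produces a genuine $0/1$-coefficient idempotent whose $L^p$-profile essentially matches that of $g$ on and off the support of $\varphi$.

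For (ii), substitute $f(Mx)$ for the idempotent $f$ produced in (i); this automatically enforces gaps of size $\ge M$ without changing the $L^p$-norm on $\TT$, and a direct change of variables gives
\begin{equation*}
\int_E |f(Mx)|^p\,dx \;=\; \frac{1}{M}\int_\TT N_{E,M}(y)\,|f(y)|^p\,dy,\qquad
N_{E,M}(y):=\#\bigl\{k\in\{0,\dots,M-1\}:(y+k)/M\in E\bigr\}.
\end{equation*}
The task is then to arrange that the peak of $|f|$ sits at a point $a$ for which (nearly) all $M$ translates $(Ma+k)/M$ lie in $E$; this is an inhomogeneous Diophantine approximation problem, solvable for $a$ rational and $M$ in an arithmetic progression adapted to $E$. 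Combined with (i), the concentration level is preserved at $c_p$.

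For (iii) and (iv), the bounds $c_{2k}\le 1/2$ follow from a Parseval-type calculation, because when $p=2k$ the function $|f|^{2k}$ has non-negative Fourier coefficients, and testing against a half-circle (or its complement) forces the factor $1/2$; the matching lower bounds $0.495<c_4$ and $0.483<c_{2k}$ arise from a quantitative refinement of the $D_n(x)D_n(qx)$ construction of \cite{Many}. Finally, for $p=2$ with gaps $\ge N$, Ingham's inequality yields $\int_I |f|^2 \le C\,|I|\int_\TT |f|^2$ for all intervals $I$ with $|I|\ge 1/N$; taking $E$ of small measure then drives the gap-constrained concentration level to $0$. The main obstacle lies in part (i) for $0<p\le 1$, where standard Marcinkiewicz--Zygmund sampling loses its cleanest form: producing a genuine idempotent whose $L^p$ mass is essentially all localized on a prescribed small set, rather than merely a non-negative coefficient polynomial, requires either a delicate probabilistic selection on the frequencies or a careful passage through positive-definite intermediaries.
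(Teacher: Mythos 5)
There are two genuine gaps in your plan, both at the heart of the theorem.

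First, your route to full concentration ($c_p=1$ for $p\notin 2\NN$) cannot work as described. You concentrate at $0$ and transfer to rationals $a/q$ via the dilation trick on $\ZZ/q\ZZ$; but any such argument compares the peak value $|R(a/q)|^p$ against the full grid sum $\sum_k |R(k/q)|^p$, which always contains the term $|R(0)|^p\geq |R(a/q)|^p$ (since $R$ is an idempotent, $R(0)$ is its largest value). Hence the discrete concentration ratio on the grid $\GG$ is capped at $1/2$, and after symmetrization you can never exceed $c_p\leq 1$ \emph{only by a fine balance that in fact stalls below} $1$ --- this is exactly why the paper's even-integer bounds sit near but below $1/2$ on that grid. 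The mechanism the paper actually uses is to peak at $1/2$ rather than at $0$ and to work on the \emph{shifted} grid $\GS=\frac{1}{2q}+\frac1q\ZZ/q\ZZ$, which avoids the point $0$ altogether; there $|D_r(1/(2q))|$ is the \emph{largest} grid value, so raising to a high power $L$ (realized as a product of Dirichlet kernels $D_r(x)\prod_l D_r(((2q)^l+1)x)$ agreeing with $|D_r|^L$ on the grid) kills every other term and gives $c_p^\star=1/2$, whence $c_p=1$. Your appeal to the failure of the Hardy--Littlewood majorant property is in the right spirit but misplaced: Mockenhaupt--Schlag enters only to produce, for $p>2$ not even, a bivariate idempotent whose marginal $p$-integral has its strict maximum at $1/2$ (for $0<p<2$ one uses $1+e(y)+e(x+2y)$ instead); Riesz products $\prod_j f(x,R^jx)$ then convert this into a gap-peaking idempotent at $1/2$. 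Without the peak-at-$1/2$/shifted-grid idea, the upgrade from $c_p>0$ to $c_p=1$ does not go through.

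Second, your gap mechanism fails: replacing $f(x)$ by $f(Mx)$ replicates the peak at the $M$ equally spaced points $(a+k)/M$, so by your own identity the concentration on a fixed small symmetric open set $E$ collapses --- at most a proportion $\approx M|E|$ of the translates $(y+k)/M$ can lie in $E$, and no choice of $a$ or of $M$ in an arithmetic progression fixes this for a short interval. The paper instead builds the large gaps into the \emph{localizing} factor $T$ (the Riesz product, whose frequencies are lacunary of the form $\sum_j m_j R^j$ with $R$ huge) and only dilates the grid factor $R(x)\mapsto R((Mq+1)x)$, which leaves all values on the grid unchanged; the product $R((Mq+1)x)T(qx)$ is then an idempotent with gaps $\geq M$ and the same concentration. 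Your parts (iii) and (iv) are essentially sound in outline (the $p=2$ obstruction is indeed the absolutely convergent Fourier series/Ingham argument, and $c_{2k}\leq 1/2$ reduces to the D\'echamps-Gondim--Lust-Piquard--Queff\'elec value $c_2^+(1/2)=1/2$ applied to $f^k$), though the specific constants $0.495$ and $0.483$ require the theta-function optimization of $B(\lambda,t)$ over products of $L$ Dirichlet kernels combined with gap-peaking at $0$, not merely a refinement of the $D_n(x)D_n(qx)$ kernel of \cite{Many}.
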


For measurable sets, our results are just as good for $p>1$.
Arriving at the limits of our current methods, we leave it as an
open problem what happens for $p\leq 1/2$, and whether there is
full concentration for $1/2<p\leq 1$.
\begin{theorem}\label{th:concentration-meas} For all $1/2 < p<\infty$
we have $p$-concentration for measurable sets. If $p$ is not an
even integer, then we have  full concentration for measurable sets
when $p>1$. If $p=2$, the level of the concentration is given by
\eqref{case2}, and for $p=4$ we have $0.495<\gamma_4\leq 1/2$. For
other even integers we have uniformly $0.483<\gamma_{2k}\leq 1/2$.
Moreover, unless $p=2$, the same level of concentration can be
achieved with arbitrarily large gaps.
\end{theorem}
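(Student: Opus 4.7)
I would deduce Theorem~\ref{th:concentration-meas} from Theorem~\ref{th:concentration} by transferring concentration on a small symmetric open neighbourhood of a Lebesgue density point of $E$ to concentration on $E$ itself. The upper bounds are immediate: every open set is measurable, hence $\gamma_p\le c_p$, which yields the bound from \eqref{case2} at $p=2$ and $\gamma_{2k}\le 1/2$ at every positive integer $k$.

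For the lower bounds, given a symmetric measurable set $E\subset\TT$ of positive measure, Lebesgue's density theorem together with the symmetry of $E$ produces a symmetric pair $\{a,-a\}$ of density points. For each $\eta>0$ there is $\delta_0(\eta)>0$ such that, for all $0<\delta\le\delta_0$, the symmetric open set
\[
U_\delta:=(a-\delta,a+\delta)\cup(-a-\delta,-a+\delta)
\]
satisfies $|U_\delta\setminus E|\le\eta\,|U_\delta|$. Applying Theorem~\ref{th:concentration} to $U_\delta$ gives, for any $\eps>0$ and any prescribed gap $N$, an idempotent $f\in\PP$ of some degree $n$, with gaps exceeding $N$ whenever $p\ne 2$, for which $\int_{U_\delta}|f|^p\ge(c_p-\eps)\int_\TT|f|^p$. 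Everything then reduces to the smallness estimate
\[
\int_{U_\delta\setminus E}|f|^p\le\eps\int_\TT|f|^p,
\]
which, combined with the preceding inequality, gives $\int_E|f|^p\ge(c_p-2\eps)\int_\TT|f|^p$ and therefore $\gamma_p\ge c_p$, from which the numerical conclusions announced in the statement all follow.

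The decisive ingredient is this last estimate. For $p>1$ it is a routine combination of H\"older's inequality with the Nikolskii-type inequality $\|f\|_q\le C\,n^{1/p-1/q}\|f\|_p$: one calibrates the two small parameters, using that $\eta\to 0$ when $\delta\to 0$, so that the measure $|U_\delta\setminus E|$ is very small compared to the polynomial resolution $1/n$, possibly revisiting the choice of $\eta$ once the degree of $f$ is known. For $1/2<p\le 1$ H\"older is unavailable and has to be replaced by a Marcinkiewicz--Zygmund sampling inequality of $|f|^p$ on a grid of mesh $\sim 1/n$, combined with a Nikolskii-type localization on each sampling cell; this sampling argument survives precisely down to $p>1/2$, and constitutes the main technical obstacle. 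The range $0<p\le 1/2$ is left open because the underlying sampling inequality breaks down there. The gap condition is inherited verbatim from the idempotent supplied by Theorem~\ref{th:concentration}, so the gap version of the result comes at no extra cost.
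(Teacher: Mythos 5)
Your reduction of the measurable case to the open case breaks down at exactly the step you call decisive, and the failure is structural, not technical. To make $\int_{U_\delta\setminus E}|f|^p\le\eps\int_\TT|f|^p$ work via Nikolskii you need, in essence, $|U_\delta\setminus E|\cdot n(\delta)\lesssim\eps$, where $n(\delta)$ is the degree of the idempotent concentrating on $U_\delta$. But $|U_\delta\setminus E|=\eta(\delta)\,|U_\delta|$ is dictated by the set $E$ (the density theorem gives $\eta(\delta)\to 0$ at an arbitrary, uncontrolled rate), while $n(\delta)$ is dictated by the construction and blows up far faster than $1/\delta$ (the concentrating polynomials are Riesz products whose degree is astronomically larger than $1/\delta$). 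These two rates are independent of each other, so "revisiting the choice of $\eta$ once the degree of $f$ is known" is a circular loop with no guarantee of termination: shrinking $\eta$ shrinks the admissible $\delta$, which increases $n$, which requires a smaller $\eta$ again. A further sanity check: since trivially $\gamma_p\le c_p$, your argument would prove $\gamma_p=c_p$ for all $p$, whereas the authors explicitly state they do not know whether $\gamma_p$ and $c_p$ coincide for $p\ne 2$ except when both equal $1$.

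The paper's actual proof is organized to avoid precisely this circularity, and its two key devices are absent from your proposal. First, the gap-peaking property is re-proved in a strengthened form (Proposition \ref{p:peak-meas} via Lemma \ref{peak-measstrong}): because the marginal function $F$ has a Gaussian-type profile, the Riesz product $F^L$ with $L\asymp\delta^{-2}$ has total integral $\asymp\delta$ while the exceptional set contributes at most its measure $\eta\delta$ times $\sup F^L=F(0)^L$; the ratio is $\eta$, \emph{independent of $\delta$ and of the degree}. This is what lets $\eta$ depend only on $\eps$. Second, the remaining factor $R$ (the grid-concentrating polynomial) must be nearly constant on the portion of $E$ that is used, and for a general measurable set this is arranged by Diophantine approximation (Khintchine/Sz\"usz/Schmidt): one finds inside $E$ most of an interval of length $\theta/q^2$ centered at a point of $\GG$ or $\GS$, and controls the oscillation of $R$ there by Bernstein and Marcinkiewicz--Zygmund inequalities. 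The length $1/q^2$ (rather than $1/q$) forces $R$ to be a product of only two factors of exponent $2p$ each, and Marcinkiewicz--Zygmund then requires $2p>1$ --- this, not a breakdown of sampling for $|f|^p$, is the true source of the threshold $p>1/2$. Finally, for full concentration at $p>1$ not an even integer the paper still needs the randomization of Section \ref{sec:random}, because the projection $\mathbf{\Pi}_q$ used to move concentration from $1/(2q)$ to $(2a+1)/(2q)$ does not preserve idempotents. None of these steps can be bypassed by the direct transfer you propose.
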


\comment{ We will prove that there is full $p$-concentration with
gap whenever $p<2$. For other values of $p(>2)$, we will only
prove $p$-concentration. In both cases, the central ingredient of
the proof will be the existence of peaking idempotents at $0$ or
$1/2$. So,  we consider first $p$-concentration with gap at $0$ or
$1/2$. }

 This improves considerably the constants given in
\cite{CRMany, Many}, which tend to zero when $p\to \infty$ or when
$p\to 1^+$ (however, to compare constants, be aware of the
notational difference between us and \cite{Many, CRMany}).

We postpone to the last part of the paper what concerns measurable
sets. The proofs will follow from an adaptation of the methods
that we develop for open sets, and also from the use of
diophantine approximation. As in \cite{Many}, we do not know
whether constants $\gamma_p$ and $c_p$ differ when $p\neq 2$,
except when we know that both of them are $1$, which is the case
of all $p>1$ not an even integer.

Let us hint some of the key ideas in our proofs, which may be of
independent interest. The first one is an explicit construction of
concentrating idempotents for the points $0$ and $1/2$ at a level
of concentration arbitrarily close to $1$ and with arbitrarily
large gaps. To emphasize their role in our construction, we will
term such concentrating idempotents as {\emph ``peaking
idempotents"}, or, when referring to the large gaps required, as
``gap-peaking idempotents" -- for a more precise meaning see the
beginning of \S \ref{sec:onepeak}.

\begin{proposition}\label{th:peakingatzero} For all $p>0$, except
for $p=2$, one has full $p$-concentration with gap at $0$. For
$p=2$, positive concentration with arbitrarily large gaps is
possible at neither points $a\in \TT$.
\end{proposition}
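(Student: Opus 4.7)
I plan to prove the two halves of this proposition by very different techniques.

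For the impossibility at $p=2$ at any point $a\in\TT$, the plan is to invoke Ingham's inequality. If $f=\sum_{h\in H}e_h$ is an idempotent whose spectrum has $\operatorname{gap}(H)>N$, Ingham's inequality yields
$$\int_I |f|^2\,dx \;\leq\; C\,|I|\,\|f\|_2^2$$
for any interval $I\subset\TT$ with $|I|>C'/N$. Given any candidate concentration level $c>0$, I pick a symmetric open $E\ni a$ with $|E|<c/(2C)$ and then $N>2CC'/c$; every idempotent $f$ with $\operatorname{gap}>N$ then satisfies $\int_E|f|^2<c\,\|f\|_2^2$, defeating the definition of $p=2$-concentration with gap at $a$. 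An equivalent Fourier route is to observe that $\widehat{|f|^2}(k)=|H\cap(H-k)|$ vanishes for $0<|k|\leq N$, so $|f|^2 = \|f\|_2^2 + R_N$ with $R_N$ supported spectrally in $\{|k|>N\}$; one then bounds $\int_E R_N$ via the Fourier decay $|\widehat{\chi_E}(k)|\lesssim 1/|k|$ and Parseval.

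For the positive half at $p\neq 2$, I plan a two-stage construction of a gap-peaking idempotent. Stage 1 (base peaking, no gap constraint): for $p>1$ the Dirichlet kernel $D_n$ peaks fully at $0$, because the pointwise bound $|D_n(x)|\leq 1/(2|\sin\pi x|)$ keeps $\int_{|x|>\eps}|D_n|^p$ bounded while $\|D_n\|_p^p\asymp n^{p-1}\to\infty$. For $0<p\leq 1$, $D_n$ fails to peak (its $L^p$ mass stays bounded), and I would substitute a more refined polynomial—for instance a weighted sum of dilated Dirichlet kernels combined with an inhomogeneous Diophantine approximation step—engineered so that $|f|^p$ is sharply localized near $0$ even in the low-exponent regime. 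Stage 2 (inserting the gap): I would search for idempotents $f$ whose spectrum has a two-scale structure, a fine scale governing the base peaking and a coarse scale $Q>N$ providing the gap. The naive attempt $f(x)=G(x)\sum_{k=1}^M e_{kQ}(x)$ for a base peaking $G$ with small spectrum distributes $|f|^p$ across the $Q$ translates $k/Q\in\TT$; to concentrate the mass only at $x=0$, one must arrange the fine-scale factor so that the sampled values $|G(k/Q)|$ for $k\neq 0$ are suppressed—for instance by arithmetic cancellation or by tuning $Q$ coprime to the spectrum of~$G$ and choosing the sizes so that $G$ is sharply peaked on the scale $1/Q$.

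The main obstacle is therefore to engineer the right two-scale idempotent and to verify its $L^p$-peaking rigorously. The analysis proceeds through the decomposition
$$\int_\TT|f|^p \;=\; \frac{1}{Q}\sum_{k=0}^{Q-1}\int_{-1/2}^{1/2}\bigl|G\bigl((y+k)/Q\bigr)\bigr|^p\,|D_M(y)|^p\,dy$$
arising from the $1/Q$-periodicity of the coarse factor, and one must dominate the $k\neq 0$ cells either by a Riemann-sum/equidistribution estimate bounding $\sum_{k\neq 0}|G(k/Q)|^p$ in terms of $Q\int_\TT|G|^p$, or by an arithmetic arrangement making those samples essentially vanish. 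I expect the subtlest step of the entire argument to be the base peaking construction when $0<p\leq 1$, where elementary Dirichlet-kernel asymptotics no longer suffice and one must engineer an idempotent approximating a Dirac mass in a sense adapted to fractional-power integration; once that is in hand, pushing the peak-to-total ratio to $1$ is then a matter of letting the fine and coarse parameters tend to infinity in the correct proportions.
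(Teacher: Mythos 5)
Your treatment of the negative half at $p=2$ is sound and matches the paper's in spirit: the paper in effect reproves the needed Ingham-type bound directly, by replacing $\chi_E$ with a triangular majorant $\Delta$ supported on $2E$ and using that $\Delta$ has absolutely summable Fourier coefficients. Citing Ingham outright is fine, but your alternative ``Fourier route'' does not close as written, because $\sum_{|k|>N}|\widehat{\chi_E}(k)|$ diverges when you only have the $1/|k|$ decay of an interval's characteristic function; you need precisely the summable majorant trick that the paper uses.

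The positive half at $p\neq2$ has a genuine, structural gap. Your two-scale candidate $f(x)=G(x)D_M(Qx)$ has spectrum $\mathrm{spec}(G)+Q\cdot\{0,\dots,M-1\}$, so its \emph{smallest} frequency gap equals the smallest gap inside $\mathrm{spec}(G)$: enlarging $Q$ does not create gaps, it only spaces out the coarse blocks while the internal structure of $G$ persists. Meanwhile your own cell decomposition shows the concentration ratio is essentially $|G(0)|^p/\sum_{k}|G(k/Q)|^p$, and if $\deg G\ll Q$ this behaves like $(\deg G)/Q\to0$, not $1$; pushing $\deg G$ up to order $Q$, as you suggest (``$G$ sharply peaked on the scale $1/Q$''), is exactly what kills the gaps, since then $\mathrm{spec}(f)$ is close to a block of consecutive integers. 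No amount of ``arithmetic cancellation'' on the samples $G(k/Q)$ changes this spectral fact. The paper escapes the tension by a genuinely multi-scale construction: a Riesz product $g(x)=\prod_{j=1}^J f(x,R^jx)$ built from one fixed, low-degree bivariate idempotent $f(x,y)$ whose marginal $F(x)=\int_\TT|f(x,y)|^p\,dy$ has a strict maximum at $0$; the gaps come from the geometric separation of the scales $R^j$ (so all frequencies of $g$ are far apart once $R$ is large), and the concentration comes from $\int_I|g|^p\to\int_I F^J$ as $R\to\infty$ together with $F^J$ localizing at the maximum of $F$ as $J\to\infty$. This also disposes of your unresolved Stage~1 for $0<p\leq1$, where ``a weighted sum of dilated Dirichlet kernels combined with an inhomogeneous Diophantine approximation step'' is a placeholder rather than a construction: the paper simply exhibits $h(x,y)=(1+e(y))(1+e(x+3y))$ (and $1+e(y)+e(x+2y)$ for $p>2$), checks by a short computation that its $p$-marginal has a strict maximum at $0$ in the relevant range, and feeds that into the Riesz-product mechanism, obtaining peaking and arbitrarily large gaps in a single stroke.
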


Note that, using the Dirichlet kernel that peaks at $0$, we find
full $p$-concentration at $0$ for $p>1$. For $p\leq 1$, the
Dirichlet kernel cannot be used. For a given concentration, our
examples will be obtained using idempotents of much higher degree.
So as for the behavior at point $0$ and $p>1$ different from $2$,
the novelty is the fact that the peaking polynomial may have
arbitrarily large gaps.

This is what cannot occur in $L^2$, in view of Ingham's
inequalities \cite{Ing, Z}. The somewhat surprising new fact here
is that it does occur for all other values of $p$.

Zygmund \cite[Chapter V  {\S}9, page 380]{Z} pointed out concerning
Ingham's results on essentially uniform distribution of square
integrals (norms) for Fourier series with large gaps: "Nothing
seems to be known about possible extensions to classes $L^p, ~~
p\neq 2$". To the best of our knowledge the problem has not been
addressed thus far. But now we find that an Ingham type inequality
is characteristic to the Hilbertian case, and for no $p\ne 2$ one
can have similar inequalities, not even when restricting to
idempotent polynomials.

The next proposition is even more surprising. It is the key to
full concentration at other points than $0$.
\begin{proposition}\label{prop:peakinghalf} Full $p$-concentration
with gap at $1/2$ holds whenever $p>0$ is not an even integer. On
the other hand, for $p=2k\in 2\NN$, $c_{2k}(1/2)= 1/2$.
\end{proposition}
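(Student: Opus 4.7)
The proposition has two parts that I would address in turn.

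For the equality $c_{2k}(1/2)=1/2$ when $p=2k$, the lower bound $c_{2k}(1/2)\geq 1/2$ is achieved by the explicit idempotent $f(x)=\sum_{j=0}^{K-1}e((2j+1)x) = e(x)\,D_K(2x) \in\PP$, whose frequencies are all odd so that $f\in\PP$. Since $|f(x)|^{2k}=|D_K(2x)|^{2k}$ has period $1/2$ with equal peaks at $0$ and $1/2$, the change of variable $y=2x$ yields
\[\int_{|x-1/2|<\delta}|f|^{2k}\,dx = \tfrac12\int_{|y|<2\delta}|D_K(y)|^{2k}\,dy,\qquad \int_\TT|f|^{2k}=\int_\TT|D_K|^{2k}.\]
Full $L^{2k}$-concentration of $D_K$ at $0$ (standard for $2k>1$) then gives the ratio tending to $1/2$ as $K\to\infty$.

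For the matching upper bound $c_{2k}(1/2)\leq 1/2$, the key observation is that $F:=|f|^{2k}=|f^k|^2$ has \emph{nonnegative} Fourier coefficients $\{a_\nu\}$, since $f^k$ has only nonnegative frequencies. Writing $E=(1/2-\delta,1/2+\delta)$ and $E'=E-1/2=(-\delta,\delta)$, a direct Fourier computation gives
\[\int_{E'}F-\int_E F \;=\; 4\sum_{\nu>0,\ \nu\text{ odd}} a_\nu\,\frac{\sin(2\pi\nu\delta)}{\pi\nu},\]
which is nonnegative whenever $\nu\delta<1/2$ for all $\nu$ in the spectrum of $F$. Combined with the disjointness bound $\int_EF+\int_{E'}F\leq\int_\TT F$ valid for $\delta<1/4$, this gives $\int_EF\leq\tfrac12\int_\TT F$. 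To make the bound uniform in idempotents $f$ of unbounded degree (so that the condition $\nu\delta<1/2$ may fail for tail frequencies), I would use a supplementary argument: either replacing $1_E$ by a Fejer-smoothed variant whose Fourier coefficients decay, or separately estimating the contribution from high-frequency $a_\nu$ via a Parseval-type bound on the odd part $F_{\text{odd}}$. The output is that for any $\epsilon>0$ one can choose $\delta$ small enough so that the ratio is at most $1/2+\epsilon$ for every $f\in\PP$.

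The subtler claim is full $p$-concentration with gap at $1/2$ for $p>0$ not an even integer. The upper-bound argument above pinpoints the obstruction in the even-integer case: the rigidity coming from the nonnegativity of the Fourier coefficients of $|f|^{2k}$. For non-even $p$, $|f|^p$ is not a trigonometric polynomial and this rigidity vanishes. My plan is to start with a gap-peaking idempotent $g$ at $0$ from Proposition~\ref{th:peakingatzero} and to transfer the peak to $1/2$. A first attempt, $e(x)\,g(2x)\in\PP$, produces two equal peaks at $0$ and $1/2$, giving only concentration $1/2$. To push this toward $1$, I would combine it with an auxiliary gap-peaking factor tailored to suppress the parasitic peak at $0$ in $L^p$-norm while preserving the one at $1/2$; such suppression is impossible in $L^{2k}$ precisely because of the Fourier-positivity obstruction described above, but becomes feasible for non-even $p$. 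The concrete implementation I envision is a product or convolution construction in the spirit of the Anderson--Ash--Jones--Rider--Saffari kernel $D_n(x)D_n(qx)$ with $q=2$, combined with the large-gap structure of Proposition~\ref{th:peakingatzero} to preserve idempotency with arbitrarily large gaps. The main technical obstacle will be this suppression step: constructing the auxiliary factor so that the product remains in $\PP$, keeps arbitrarily large gaps, and drives the concentration at $1/2$ all the way to $1$ while exploiting the non-polynomial nature of $|f|^p$ in an essential way.
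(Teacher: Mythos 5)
Your treatment of the even--integer case takes a different route from the paper: the paper simply observes that $|f|^{2k}=|f^k|^2$ with $f^k\in\PP^+$ and quotes the value $c_2^+(1/2)=1/2$ of D\'echamps-Gondim--Lust-Piquard--Queff\'elec, whereas you argue directly from nonnegativity of the Fourier coefficients of $|f|^{2k}$. That is a legitimate idea, but as you yourself note, $\sin(2\pi\nu\delta)\geq 0$ fails for $\nu>1/(2\delta)$, so for a fixed symmetric set $E$ the key inequality $\int_{E'}F\geq\int_E F$ is not established across all of $\PP$ (whose degrees are unbounded); the ``Fej\'er-smoothing / Parseval bound'' you invoke is exactly the part that needs to be carried out, and it is not. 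Until that is done the upper bound $c_{2k}(1/2)\leq 1/2$ is not proved by your argument.

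The more serious gap is in the main claim, full $p$-concentration with gap at $1/2$ for $p\notin 2\NN$. What you have here is a plan, not a proof: the decisive step --- constructing an auxiliary idempotent factor that ``suppresses the parasitic peak at $0$'' while keeping the product idempotent, with arbitrarily large gaps, and drives the concentration at $1/2$ to $1$ --- is exactly where all the content lies, and no mechanism is given. It is also not clear that such a suppression scheme can work: multiplying by anything like $D_n(x)D_n(2x)$ (or any idempotent that does not itself already favor $1/2$ over $0$ in $L^p$) will not break the symmetry between $0$ and $1/2$, and the paper's proof shows that the asymmetry you need is a genuinely subtle phenomenon tied to the failure of the Hardy--Littlewood majorant property. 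The paper proceeds quite differently: it produces \emph{bivariate} idempotents whose marginal $p$-integral $F(x)=\int_0^1|f(x,y)|^p\,dy$ has a strict maximum at $1/2$ --- namely $f(x,y)=1+e(y)+e(x+2y)$ for $0<p<2$ (Proposition~\ref{prop:peakpol2}), and, for $p>2$ not even, the Mockenhaupt--Schlag idempotent $g(x,y)=(1+e_1(x)e_k(y))(1+e_1(x)e_{k+1}(y))$ with $k>p/2$ odd (Proposition~\ref{prop:smooth-half}), where the sign pattern of the Fourier coefficients $c_n$ of $|2\cos\pi y|^p$ forces the maximum of $G_p$ to fall at $1/2$ --- and then passes to Riesz products $\prod_j f(x,R^j x)$ (Proposition~\ref{prop:bividpeaking}, Lemma~\ref{l:Rieszconv}) to produce univariate idempotents with arbitrarily large gaps concentrating arbitrarily close to fully at $1/2$. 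Your proposal does not engage with this mechanism, and as written it does not yield the result.
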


The assertion for $p$ an even integer will follow directly from
the work of D\'echamps-Gondim, Lust-Piquard and Queff\'elec
\cite{DPQ, DPQ2}.

For $0<p<2$ we base our argument on the properties of the
bivariate idempotent $1+e(y)+e(x+2y)$.

For $p>2$, we will rely on a construction of Mockenhaupt and
Schlag, see \cite{MS}, given in their work on the Hardy-Littlewood
 majorant problem, which we describe now in its original
formulation. Following Hardy and Litlewood, $f$ is said to be a
majorant to $g$ if $|\widehat{g}|\leq \widehat{f}$. Obviously,
then $f$ is necessarily a positive definite function. The (upper)
majorization property (with constant 1) is the statement that
whenever  $f\in L^p(\TT)$ is a majorant of $g\in L^p(\TT)$, then
$\|g\|_p\leq \|f\|_p$. Hardy and Littlewood proved this for all
$p\in2\NN$. On the other hand already Hardy and Littlewood
observed that this fails for $p=3$: they took $f=1+e_1+e_3$ and
$g=1-e_1+e_3$ (where $e_k(x):=e(kx)$) and calculated that
$\|f\|_3<\|g\|_3$.

The failure of the majorization property for $p\notin 2\NN$ was
shown by Boas \cite{Boas} (see also \cite{Bac} for arbitrarily
large constants, and also \cite{Fou, Moc} for further comments and
 similar results in other groups.) Montgomery conjectured that it
fails also if we restrict to majorants belonging to $\PP$, see
\cite[p. 144]{Mon}. This has been recently proved by Mockenhaupt
and Schlag in \cite{MS}.

\begin{theorem}[{\bf Mockenhaupt \& Schlag}]\label{th:MMS} Let
$p>2$ and $p\notin 2\NN$, and let $k>p/2$ be arbitrary. Then for
the trigonometric polynomials $g:=(1+e_k)(1-e_{k+1})$ and
$f:=(1+e_k)(1+e_{k+1})$ we have $\|g\|_p >\|f\|_p$.
\end{theorem}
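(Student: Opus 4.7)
My plan is to reduce the comparison $\|g\|_p$ versus $\|f\|_p$ to a one-parameter sum of Fourier coefficients of definite sign. Starting from the factorizations
\[
|f|^p \;=\; |1+e_k|^p\,|1+e_{k+1}|^p, \qquad |g|^p \;=\; |1+e_k|^p\,|1-e_{k+1}|^p,
\]
I would use the classical cosine-power expansion
\[
(2+2\cos t)^{p/2} \;=\; A_0 + \sum_{n\ge 1} A_n\cos(nt),\qquad A_n \;=\; \frac{2\,\Gamma(p+1)}{\Gamma\bigl(\tfrac{p}{2}+n+1\bigr)\,\Gamma\bigl(\tfrac{p}{2}-n+1\bigr)},
\]
which one derives by the substitution $u=t/2$ and a Beta-function integral. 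Since $p\notin 2\NN$, no denominator $\Gamma$ is a pole, and Stirling gives $|A_n|=O(n^{-p-1})$, so the series converges absolutely. Shifting $t\mapsto t+\pi$ flips the odd harmonics, producing the companion formula $(2-2\cos t)^{p/2}=A_0+\sum (-1)^n A_n\cos(nt)$. Plugging $t=2\pi kx$ and $t=2\pi(k+1)x$ then expresses $|1+e_k|^p$ and $|1\pm e_{k+1}|^p$ as absolutely convergent cosine series in $x$.

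Multiplying these series and integrating term by term, orthogonality keeps only the cross-frequencies with $km=(k+1)n$. The coprimality $\gcd(k,k+1)=1$ forces $m=(k+1)j$, $n=kj$ with $j\ge 1$, and a short bookkeeping yields
\[
\int_\TT |g|^p \;-\; \int_\TT |f|^p \;=\; \tfrac12\sum_{j\ge 1} A_{kj}\,A_{(k+1)j}\bigl((-1)^{kj}-1\bigr),
\]
the factor $(-1)^{kj}$ arising from the sign flip of the odd harmonics in the expansion of $|1-e_{k+1}|^p$ at index $n=kj$. Only summands with $kj$ odd survive, each with weight $-2$. Taking $k$ odd is what gives the statement content: for $k$ even the surviving set is empty, in line with the observation that then $f(x+1/2)\equiv g(x)$, so the inequality has to be read for $k$ of the correct parity (given arbitrarily large).

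The crux is the sign of $A_{kj}A_{(k+1)j}$. Writing $p/2\in(k_0,k_0+1)$ with $k_0=\lfloor p/2\rfloor$, for $n\ge k_0+1$ the value $\tfrac{p}{2}-n+1$ falls in an interval $(-s,-s+1)$ with $s=n-k_0-1$, on which $\Gamma$ has sign $(-1)^s$, so $\operatorname{sign}(A_n)=(-1)^{n-k_0-1}$. The hypothesis $k>p/2$ guarantees $kj,(k+1)j\ge k_0+1$ for every $j\ge 1$, and
\[
\operatorname{sign}\bigl(A_{kj}\,A_{(k+1)j}\bigr) \;=\; (-1)^{(kj-k_0-1)+((k+1)j-k_0-1)} \;=\; (-1)^{(2k+1)j} \;=\; (-1)^j
\]
collapses everything to the parity of $j$. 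For odd $j$ the product is strictly negative, so every surviving term of $\int|g|^p-\int|f|^p$ is strictly positive, yielding $\|g\|_p>\|f\|_p$. The hard part is precisely this coherent sign bookkeeping: $k>p/2$ is needed to push the indices into the alternating regime of $\Gamma$; the coprimality $\gcd(k,k+1)=1$ is what makes the cross-products depend on a single parameter $j$, so that one parity check controls the whole series; and the non-even-integrality of $p$ is what keeps every $A_n$ strictly nonzero. Absolute convergence and termwise integration are then immediate from the Stirling bound.
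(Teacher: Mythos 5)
Your proof is correct and follows essentially the same route as the one the paper quotes from Mockenhaupt and Schlag (see the computation inside the proof of Proposition \ref{prop:smooth-half}): expand the cosine power in a Fourier series, use coprimality of $k$ and $k+1$ to reduce the cross-frequencies to a single parameter $j$, and track signs through the Gamma (equivalently, recursion) formula. The only difference is normalization: your $A_n$ equals $2(-1)^n c_n$ in the paper's notation, so your identity $\operatorname{sign}\bigl(A_{kj}A_{(k+1)j}\bigr)=(-1)^j$ is the same fact as the paper's assertion that all the products $c_{kj}c_{(k+1)j}$ are positive for $k>p/2$. One thing you add that is worth noting: you correctly observe that if $k$ is even then $g(x)=f(x+1/2)$, so $\|g\|_p=\|f\|_p$ and the strict inequality fails. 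Thus the theorem needs $k$ odd; the paper implicitly acknowledges this by taking $k$ odd in Proposition \ref{prop:smooth-half}, but the wording ``let $k>p/2$ be arbitrary'' in Theorem \ref{th:MMS} is loose, and your parity remark closes that gap.
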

Our proof of Proposition \ref{prop:peakinghalf} for $p>2$ and
$p\notin 2\NN$, will be based on the construction of Mockenhaupt
and Schlag.

 Once we have our peaking polynomials at $1/2$, we
conclude in proving the following assertion.

\begin{proposition}\label{th:conditional} Let $p>0$ and assume that we
have full p-concentration with gap at $1/2$ for this value of $p$.
Then we also have $p$-concentration. Moreover, $c_p=1$ and we have
full $p$-concentration with gap.
\end{proposition}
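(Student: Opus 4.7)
The plan is to transfer the full concentration at $1/2$ to an arbitrary non-empty symmetric open $E$, preserving the gap condition. Since $|g|$ is even for any idempotent $g$, every such $E$ contains a symmetric set $I=(a-\delta,a+\delta)\cup(-a-\delta,-a+\delta)$ for some $a\in[0,1/2]$ and $\delta>0$. The case $a=1/2$ is immediate from the hypothesis and the case $a=0$ follows from Proposition \ref{th:peakingatzero}, so the task reduces to $a\in(0,1/2)$. By density, I will further approximate $a$ by a rational $b=m/(2q)\in I$ with $m$ odd, so a small symmetric neighbourhood of $\{b,-b\}$ still lies in $I\subset E$.

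The central construction will be a product of dilations of peaking idempotents at $1/2$. First I apply the hypothesis to pick a peaking idempotent $f$ at $1/2$ with gap exceeding $Nq$ and with nearly all its $L^p$ mass in an $\eta$-neighbourhood of $1/2$, for small $\eta$. A routine change-of-variables computation shows that $f(qx)$ is an idempotent with gap $>N$ whose $L^p$ mass distributes equally among the $q$ narrow peaks at $\{(2k+1)/(2q)\}$, so a single dilation by itself only provides roughly $2/q$ concentration near $\{b,-b\}$. To focus the mass onto the two peaks $\pm b$ alone I will pass to a product $g(x):=\prod_{l=1}^{L}f_l(q_lx)$, choosing the $q_l$ so that: (i) $q_lb\equiv 1/2\pmod{1}$ for every $l$, which keeps $\{b,-b\}$ in the common peak set of the factors; (ii) the $q_l$ grow rapidly enough, say $q_{l+1}$ larger than $q_l$ times the maximal Fourier degree of $f_l$, so that the Fourier supports of the factors add uniquely, making $g$ idempotent with gap preserved; and (iii) an arithmetic choice of the $q_l$ modulo $2q$ forces the common peak set to be as small as possible, ideally $\{b,-b\}$.

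The $L^p$ analysis of $|g|^p=\prod_l|f_l(q_lx)|^p$ will rest on two estimates: a quasi-independence identity for rapidly growing dilations giving $\int_\TT|g|^p\asymp\bigl(\int_\TT|f|^p\bigr)^L$, together with a peak-profile analysis showing that outside the common peak set at least one factor $|f_l(q_lx)|$ drops from its peak value $K=|f(1/2)|$ to its typical (much smaller) magnitude, so $|g|^p$ off the peaks is negligible as $L$ grows. A suitable choice of $L$ and $q_L$ then forces the fraction of mass in $I\subset E$ above $1-\varepsilon$. The hardest step will be (iii): for favourable $b$ (such as $b=1/4$) the arithmetic cleanly reduces the common peak set to $\{b,-b\}$, but for other $b$ spurious extra common peaks appear (e.g.\ the parasitic $x=1/2$ in the analogous construction for $b=1/6$) and cap the concentration below $1$. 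For those remaining cases I will fall back on Diophantine approximation by good rationals together with the upper semi-continuity of $c_p(\cdot)$ remarked in the text, propagating $c_p(a)=1$ from a dense set of good points to every $a\in\TT$. Because gaps are preserved at each step, this will deliver both $c_p=1$ and full $p$-concentration with gap.
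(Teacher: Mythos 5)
Your plan diverges structurally from the paper's: you try to build the concentrating polynomial purely as a product $g(x)=\prod_{l}f_l(q_lx)$ of dilated gap-peaking idempotents. This cannot work for general $b=m/(2q)$, and the obstruction sits exactly at your step (iii). If $\gcd(m,2q)=1$, the requirement $q_l b\equiv 1/2\pmod1$ is $q_l m\equiv q\pmod{2q}$, which forces $q\mid q_l$; writing $q_l=qs_l$ with $s_l$ odd, the peak set of $f_l(q_lx)$ is the grid $\mathbb{G}_{qs_l}^{\star}$, which always contains the whole of $\mathbb{G}_{q}^{\star}$. Thus the common peak set of any admissible product contains all $q$ points of $\mathbb{G}_q^\star$, and since each factor assigns the same height to all of its peaks, the fraction of the $L^p$ mass of $g$ near $\{b,-b\}$ is capped at roughly $2/q$, no matter how many factors you take or how you choose the dilations. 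The only case where the common peak set really reduces to $\{b,-b\}$ is $q=2$, i.e.\ $b=\pm1/4$, precisely your ``favourable'' example. Consequently your fallback has nothing to start from: upper semicontinuity does let one pass from a \emph{dense} set of points where $c_p(\cdot)=1$ to all of $\TT$, but the set of good points produced by your direct construction is just $\{0,\pm1/4,1/2\}$, which is not dense.

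The missing device is a second, \emph{grid-concentrated} factor $R(x)$, distinct from the Dirac-like factor. In the paper's proof (see Propositions~\ref{half} and \ref{prop:pcondiconc}), one forms $S(x)=R(x)\,T(qx)$: the gap-peaking idempotent $T(qx)$ localizes $|S|^p$ onto small neighborhoods of $\mathbb{G}_q^\star$, while $R$ is chosen to be nearly extremal for the \emph{discrete} concentration problem on $\mathbb{G}_q^\star$, so that $\left|R\left(\frac{2a+1}{2q}\right)\right|^p$ nearly exhausts $\sum_{k}\left|R\left(\frac{2k+1}{2q}\right)\right|^p$. The paper achieves this by taking $R$ to be a product of scaled Dirichlet kernels $D_r(x)\prod_{\ell}D_r\left(((2q)^\ell+1)x\right)$, which on the grid agrees with $|D_r|^L$, and then showing $c_p^{\star}=1/2$ by computing $A(Lp,1/4)=(1-2^{-Lp})\zeta(Lp)\to1$ as $L\to\infty$. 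It is exactly this factor $R$ that suppresses the $q-2$ parasitic peaks of $\mathbb{G}_q^\star$ — a task which products of translated copies of the same peaking idempotent cannot perform, because such products cannot distinguish one grid point from another.
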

The proof of Proposition \ref{th:conditional} consists of
considering products like
\begin{equation}\label{eq:ourkernel}
 D_{r}(s_1x)\cdots D_{r}(s_nx) T(qx),
\end{equation}
where the similarity to \eqref{eq:manykernel} may be misleading in
regard of the role of the Dirichlet kernels here: the role of the
``approximate Dirac delta" is fully placed on $T$, which is a
peaking function at  $1/2$ with large gaps that insure that the
product is still an idempotent. The first factors will be chosen
in such a way that they coincide with a power of a Dirichlet
kernel on some grid $\frac 1{2q}+\ZZ/q\ZZ$. For measurable sets,
the use of diophantine approximation forces us to take at most two
factors, resulting in the restriction $p>1/2$.

When  there is not full $p$-concentration at $1/2$, i.e. for
$p=2k$, we could not determine $c_{2k}$ precisely. Still, we
can use a peaking function at $0$, provided by Proposition
\ref{th:peakingatzero}, thus obtaining reasonable uniform bounds.

\smallskip

Our last results derive from the consideration of the class of
positive definite trigonometric polynomials
\begin{equation}\label{eq:pos-def} \PP^+:=\left\{ \sum_{h\in
H}a_he_h ~:~ H\subset \NN, ~ \# H< \infty \, ; \ a_h >0 \
{\rm{for}} \  h\in H \ \right\},
\end{equation}
for which full $p$-concentration for measurable sets can be proved
for  $p>0$ not an even integer. We then use a randomization
process to transfer this result to the class $\PP$ for $p>2$, and
then using that even to $p>1$.

\smallskip

Let us record here two remarks on further developments of the
results given in the present paper.

\begin{remark}\label{ajout} The above results are well adapted to
give counter-examples for the Wiener property,
which is concerned with the possibility of inferring $f\in
L^p(\TT)$ for positive definite functions $f$ having large gaps in
case we know $f\in L^p(I)$ on some small interval (or even
measurable set). For developments in this direction see
\cite{CRAS}. For previous counter-examples to the Wiener property
for $p>2$, see  the references cited in \cite{CRAS}, and also the
constructions given by  Erd\H os and R\'enyi \cite{ER} with an
existential proof, and, for $p>6$, by Tur\'an \cite{T} with a
concrete construction.
\end{remark}

\begin{remark}\label{L1} As seen above, the conjecture of Ash,
Anderson, Jones, Rider and Saffari on nonexistence of
$L^1$-concentration, described after Theorem
\ref{th:largepconcentration}, fails. But in a sense this is due to
a "cheating" in the extent that we can simulate powers of
Dirichlet kernels by products of their scaled versions. In a
forthcoming note \cite{L1} we show, however, that on the finite
groups $\ZZ/q\ZZ$ uniform in $q$ $L^1$ concentration does really
fail.
\end{remark}

\medskip
Let us finally fix some notations that will be used all over. We
denote
\begin{equation}\label{eq:degree-q} \mathcal{T}_q:=\left\{
\sum_{h=0}^{q-1} a_h e_h \, ; \ a_h \in\CC \ {\rm{for}} \  h=0,
\cdots, q-1 \ \right\}
\end{equation}
the space of trigonometric polynomials of degree smaller than $q$
and
\begin{equation}\label{eq:idempotent-q} \PP_q:=\left\{
\sum_{h\in H}e_h ~:~ H\subset \{0,1, \cdots q-1\}\right\}
\end{equation}
the set of idempotents of degree smaller than $q$.
\bigskip

{\bf Aknowledgement.} The authors thank Terence Tao, who suggested
the construction of peaking functions through bivariate
idempotents and Riesz products \cite{Tao}. Although Riesz products
form a well-known technique, see e.g. \cite{Bac, GR, Moc}, and
bivariate idempotents have already been occurred in the subject,
too, see \cite{CRMany, Many}, combining these for the particular
construction did not occur to us, so the present paper could not
have been written without this contribution.

The authors thank also Gerd Mockenhaupt and Wilhem Schlag for
giving them their recent manuscript on the Hardy and Littlewood
majorant problem \cite{MS}. One of their construction plays a
crucial role in this paper.


\newpage

\bigskip

\begin{center}
{\bf Part I: Limitations of full concentration.}
\end{center}

\section{Negative results regarding concentration when $p\in2\NN$}\label{sec:negative}

Let us first start with proving that in case $p=2$, requiring
arbitrarily large gaps decreases the level of concentration to
$0$, as said in Theorem \ref{th:concentration} and Proposition
\ref{th:peakingatzero} (and, consequently, in Theorem
\ref{th:concentration-meas}, too).

For this there is a well known argument. We take an interval $E$
centered at $0$ and a triangular function $\Delta$ supported by
$2E$ and equal to $1$ at zero. Let $N$  be an integer and $f$ an
idempotent with gap $N$. Then
$$
\int_E |f|^2dt\leq 2\int \Delta |f|^2dt=2\sum_m\sum_n \widehat
\Delta(m) \widehat f(n)\overline {\widehat f (n+m)}.
$$
If we write separately the term with $m=0$ and insert
$\widehat{\Delta}(0)=|E|$, then the right hand side becomes
$$
2 |E| \sum_n |\widehat{f}(n)|^2 + 2\sum_{|m|>N}\widehat \Delta(m)
\sum_n\widehat f(n)\overline {\widehat f (n+m)}.
$$
Finally, by an application of the Cauchy-Schwarz inequality,
$$
\int_E |f|^2dt\leq 2  |E| \sum_n |\widehat{f}(n)|^2 + 2
\sum_{|m|>N}|\widehat\Delta(m)| \sum_n |\widehat{f}(n)|^2.
$$
According to Parseval's identity $\int_\TT |f|^2 dt = \sum_n
|\hat{f}(n)|^2$, hence
$$
\frac{\int_E |f|^2dt}{\int_\TT |f|^2 dt} \leq 2 |E| +  2
\sum_{|m|>N} |\widehat\Delta(m)|.
$$
The last estimate can be taken arbitrarily small by taking the
interval $E$ small enough, and then the gap $N$ large enough,
using the fact that the Fourier series of $\Delta$ is absolutely
convergent. This contradicts the peaking property with gap.

\begin{remark} The same proof, using for $\Delta$ a triangular
function supported by $E$, gives the reverse inequality
$$
\frac{\int_E |f|^2dt}{\int_\TT |f|^2 dt} \geq \frac
{|E|}{2+\epsilon},
$$
valid for functions with sufficiently large gaps, depending on $E$
and $\epsilon>0$. These type of estimates are known as Ingham type
inequalities, and various generalizations have many applications
e.g. in control theory, see \cite{Kom}, \cite{TT}, \cite{TT2}. The
fact that one can have full $p$-concentration with gap at $0$ may
be interpreted as the impossibility of an Ingham type inequality
for $p\neq 2$. This settles to the negative a problem posed by
Zygmund, see Notes to Chapter V  \S 9, page 380 in \cite{Z}.
\end{remark}

\smallskip

Next, we explain how to obtain the necessary condition $c_{2k}\leq
1/2$. In fact one knows more, since this is also valid for the
problem of concentration on the  class $\PP^+$ of positive
definite exponential polynomials (see \eqref{eq:pos-def}). Let us
denote by $c_p^+$ and $c_p(a)^{+}$, as well as $\gamma_p^+$ and
$\gamma_p^+(a)$, the corresponding concentration constants, with
the class $\PP$ of idempotents replaced by the class $\PP^+$. One
has the inequalities
$$
c_p(a) \leq c_p^+(a), \quad c_p\leq c_p^+, \qquad \gamma_p(a)\leq
\gamma_p^+(a) \quad \gamma_p\leq \gamma_p^+.
$$
It was proved in \cite{DPQ, DPQ2} that $c_2^+(1/2)=1/2$. From this
we obtain that for $p=2k$ an even integer, $c_{2k}(1/2)\leq
c_{2k}^{+}(1/2) \leq 1/2$. Indeed, if $f\in\PP^+$, so is $f^k$,
and using the already known value $c_2^+(1/2)=1/2$ we infer
$c_{2k}(1/2)\leq c_2^+(1/2)=1/2$. In fact we have equality,
$$c_{2k}(1/2)=
c_{2k}^{+}(1/2)= 1/2,$$ taking the Dirichlet kernel $D_N(2x)$ as
concentrating polynomial.

While \cite{DPQ, DPQ2} gives also $c_2^+=1/2$, we do not know the
exact values of $c_{2k}$ and $c_{2k}^{+}$ for $k>1$.

We do not have any other negative result than the ones in this \S.

\bigskip

\begin{center}
{\bf Part II: Concentration on open sets.}
\end{center}

\section{Full concentration with gap and peaking functions}\label{sec:onepeak}

In this section, we will prove Proposition \ref{th:peakingatzero}
and Proposition \ref{prop:peakinghalf}. For $a=0$ or $1/2$, we are
interested in the construction of gap-peaking idempotents, that
is, for  all $\e$, $\delta$ and $N>0$, idempotent exponential
polynomials
\begin{equation}\label{eq:gapquestion2}
T(x):=\sum_{k=1}^{K} e(n_k x),
\end{equation}
 with gap condition $n_{k+1}-n_k>N$
($k=1,\dots,K)$, so that
\begin{equation}\label{eq:concentration}
\int_{a-\delta}^{a+\delta} |T|^p > (1-\e) \int_\TT |T|^p.
\end{equation}

The first step is to prove the following.

\begin{proposition}\label{prop:bividpeaking}
Let $f$ be an idempotent exponential polynomial in two variables
and of the form
\begin{equation}\label{eq:fdef}
f(x,y) = \sum_{k=1}^K e( n_k x + m_k y ),
\end{equation}
where $K\in\NN$ and $n_k, m_k\in \NN$ are two sequences of
nonnegative integers, with $m_k$  strictly increasing. Assume that
$f$ has the property that its ``marginal $p$-integral", given by
\begin{equation}\label{eq:marginalnorm}
F(x) := \int_0^1 |f(x,y)|^p dy,
\end{equation}
has a strict maximum at $a$, for $a=0$ or $a=1/2$. Then one has
full $p$-concentration with gap at the point $a$.
\end{proposition}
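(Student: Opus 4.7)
\medskip

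\textbf{Plan.} Given $\delta,\e,N>0$, the idea is to construct the required univariate gap-peaking idempotent as a lacunary diagonal product
$$T(x) \;:=\; \prod_{j=1}^{J} f(x, M_j x),$$
where $J$ is a large integer and $1 \ll M_1 \ll M_2 \ll \cdots \ll M_J$ is a very fast increasing sequence of positive integers, both chosen in terms of $f$, $\e$, $\delta$ and $N$. The diagonal substitution $y=M_j x$ is what allows one to inflate the gaps of a univariate specialization of $f$, while multiplying $J$ such specializations is what converts peaking in an \emph{average} sense (which is all $F$ directly provides) into true pointwise concentration.

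First I would verify that $T\in\PP$ has gaps exceeding $N$. Expanding the product, $T(x) = \sum_{\vec k} e(\Lambda_{\vec k} x)$ with $\Lambda_{\vec k} = \sum_{j=1}^J (n_{k_j} + M_j m_{k_j})$ and $\vec k = (k_1,\dots,k_J) \in \{1,\dots,K\}^J$. Because the $m_k$ are strictly increasing, two distinct tuples $\vec k\neq \vec k'$ differ at a largest coordinate $j_0$, at which the contribution $M_{j_0}(m_{k_{j_0}} - m_{k'_{j_0}})$ to $\Lambda_{\vec k}-\Lambda_{\vec k'}$ dominates every lower-order term provided $M_{j+1} \gg J M_j \max_k m_k + J\max_k n_k$. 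Under such lacunarity, distinct multi-indices produce distinct frequencies with coefficient $1$, so $T\in\PP$; the same estimate shows consecutive frequencies differ by at least $M_1$ minus controllable corrections, so taking $M_1>C(f,J)\,N$ yields gaps $>N$.

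The heart of the argument is the comparison
$$\int_I |T(x)|^p\,dx \;\approx\; \int_I F(x)^J\,dx \qquad (I\subset\TT),$$
with error arbitrarily small if the $M_j$'s are sufficiently lacunary. Granted this for both $I=[a-\delta,a+\delta]$ and $I=\TT$, the proposition follows at once: since $F$ is continuous on $\TT$ with a strict maximum at $a$, Laplace-type asymptotics give $\int_{|x-a|\le\delta}F^J\,dx \,/\, \int_\TT F^J\,dx \to 1$ as $J\to\infty$. Thus I would first fix $J$ so that $F^J$ is $(1-\e/2)$-concentrated on $[a-\delta,a+\delta]$, then take the $M_j$'s lacunary enough that the displayed approximation holds with relative error $<\e/2$ on both intervals; combining gives the required $(1-\e)$-concentration of $|T|^p$.

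The main obstacle is to prove the displayed approximation. Fourier expanding $|f(x,y)|^p=\sum_m c_m(x)e(my)$ in $y$ with $c_0(x)=F(x)$, the product becomes
$$|T(x)|^p \;=\; \sum_{\vec m} \Bigl(\prod_j c_{m_j}(x)\Bigr)\, e\!\Bigl(\sum_j m_j M_j\, x\Bigr),$$
so that $\int_I |T|^p\,dx - \int_I F^J\,dx$ is a sum of oscillatory integrals over $\vec m\neq \vec 0$. Since $|f|^p$ need not be smooth in $y$ when $p\notin 2\NN$, the coefficients $c_m(x)$ decay only slowly and the full series resists direct control. My strategy would be to first approximate $|f(x,y)|^p$ uniformly on $\TT^2$ by a trigonometric polynomial of degree $\leq n$ in $y$ with continuous $x$-coefficients (via Ces\`aro or Weierstrass approximation), reducing the offending sum to finitely many indices with $|m_j|\leq n$; then with $M_j = A^j$ for $A\gg n, J$, every nonzero $\sum_j m_j M_j$ is bounded below by a positive power of $A$, so each oscillatory integral is $O(1/A)$ by the trivial bound on $\int_I e(\lambda x)\,dx$, and the total error tends to zero. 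This Riesz-product-type mechanism --- the Tao suggestion recorded in the acknowledgements --- is precisely what transfers bivariate peaking of $F$ into univariate $p$-concentration with arbitrarily large gaps.
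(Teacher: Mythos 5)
Your proposal is essentially the paper's proof: the same Riesz-product construction $T(x)=\prod_{j=1}^J f(x,M_jx)$, the same reduction $\int_I|T|^p\to\int_I F^J$ as the $M_j$ become lacunary, and the same Laplace-type argument that $F^J$ concentrates near the strict maximum of $F$. The only cosmetic difference is in the convergence lemma: the paper approximates the full $(J+1)$-variable integrand by an exponential polynomial and invokes Riemann--Lebesgue, whereas you approximate $|f|^p$ by a trigonometric polynomial in $y$ and bound the resulting finitely many oscillatory integrals over intervals directly; both are valid and interchangeable here.
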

\begin{proof}
Choose $M$ with $0\leq m_k, n_k <M$ for all $k$ and consider the
Riesz product
\begin{equation}\label{eq:Rieszprod}
g(x) := g_{R,J}(x):= \prod_{j=1}^J f(x, R^j x)
\end{equation}
where $R$ is a very large integer, $f$ is given by \eqref{eq:fdef}
satisfying the assumption, and $J$ will be chosen later on. If we
take $R>M(J+1)$, then $g\in\PP$; moreover, $g$ will obey a gap
condition of size $N$ if $R$ is large enough depending on $J$, $M$
and $N$. Recall that the marginal $p$--integral
\eqref{eq:marginalnorm} has a strict maximum at $a$. For any fixed
interval $I$, the integral of $|g|^p$ on $I$ will approach  the
integral of $F^J$ on $I$ as $R\to\infty$. Indeed,
$$
\int_I |g|^p =  \int_I \prod_{j=1}^J |f(x, R^j x)|^p dx \,,
$$
and as the function $|f|^p \in C(\TT^2)$, we can apply Lemma
\ref{l:Rieszconv} below.
\begin{lemma}\label{l:Rieszconv}
Assume that $\varphi\in C(\TT\times\TT^{J})$. Denote the marginal
integrals by $\Phi(x):=\int_{\TT^J }\varphi(x,{\bf y})d{\bf y}$.
Then, for $E$ a measurable set of positive measure, we have
\begin{equation}\label{eq:Rieszlimit}
\lim_{n_1,n_2,\dots,n_J\to\infty} \int_E \varphi\left(x,n_1
x,n_1n_2x, \dots, n_1n_2\cdots n_Jx\right) dx = \int_E \Phi(x) dx.
\end{equation}
\end{lemma}
Here by $n_1,\dots,n_J\to\infty$ we naturally mean
$\min(n_1,\dots,n_J)\to\infty$. For the sake of remaining
self-contained, we give a proof below, even if this one is
standard, mentioned also e.g. in \cite{Moc, Mon, Bac} (for $J=1$).

\begin{proof} By density, it is sufficient to prove this for
$\varphi$ an exponential polynomial on $\TT\times\TT^{J}$. By
linearity, it is sufficient to consider a monomial. When it does
not depend on the second variable there is nothing to prove.
Assume that $\varphi(x,{\bf y})=e(kx +l_1y_1+\cdots +l_Jy_J)$,
with at least one of the $l_j$'s being nonzero. We want to prove
that
$$
\int_E \varphi \left(x,n_1 x,n_1n_2x, \dots, n_1n_2\cdots
n_Jx\right) dx \longrightarrow 0 \qquad (n_1,\dots,n_J \to
\infty).
$$
This integral is the Fourier coefficient of the characteristic
function of $E$ at the frequency $k+n_1l_1+n_1n_2l_2+\cdots +
n_1n_2\cdots n_Jl_J$, which tends to infinity for
$n_1,\dots,n_J\to\infty$. We conclude using the Riemann-Lebesgue
Lemma.
\end{proof}

\smallskip

Let us go back to our Riesz product $g$ in \eqref{eq:Rieszprod}.
Let us first choose $J$ large enough: Then $F^J$ will be
arbitrarily concentrated on $I:=[a-\delta,a+\delta]$ in integral
because $F$ has a strict global maximum at $a$. More precisely, we
fix $J$ large enough so that
$$\int_I
F^{J} >(1-\varepsilon)\int_\TT F^{J} .
$$
Once $J$ is fixed, we use Lemma \ref{l:Rieszconv} for the function
$$
\varphi(x, {\bf y}):=\prod_{j=1}^J|f(x, y_j)|^p.
$$
We know that
$$
\lim_{R\to\infty} \int_I |g_{R,J}|^p = \int_I F^{J},
$$
and the same for the integral over the whole torus. The
proposition is proved.
\end{proof}

This concludes the proof of Proposition \ref{th:peakingatzero},
assuming that the condition of Proposition \ref{prop:bividpeaking}
holds. Next we will focus on this point.

\begin{remark}The function $|f|$ is even in the sense that
$|f(-x,-y)|=|f(x,y)|$, since the quantities inside the absolute
value sign are just complex conjugates. Therefore, $F$ is even.
Moreover it can have a \emph{unique} maximum in $\TT$ if only this
maximum is either at $0$ or at $1/2$.
\end{remark}

\begin{proposition}\label{prop:peakpol2} Let
$f(x,y):=1+e(y)+e(x+2y)$. Then   the marginal integral function
$F_p(x):=\int_0^1 |f(x,y)|^pdy$ is a continuous function, which
has a unique, strict maximum at $0$ for $p>2$, while it has a
strict maximum at $1/2$ for $p<2$.
\end{proposition}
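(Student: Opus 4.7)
The plan is to reduce $F_p$ to a one-variable integral $H_p(t)$ with $t := \cos(\pi x)$, and then to prove that $H_p$ is strictly monotone on $[0,1]$ with the direction of monotonicity determined by the sign of $p-2$. Continuity of $F_p$ on $\TT$ is immediate from continuity and boundedness of $|f|^p$ on $\TT^2$ via dominated convergence. For the reduction, I first translate $y \mapsto y - x/2$ inside the integral (legitimate by periodicity), obtaining
\begin{equation*}
F_p(x) = \int_0^1 \bigl|\,1 + e(-x/2)\,e(y) + e(2y)\,\bigr|^p dy.
\end{equation*}
Factoring $e(y)$ inside the modulus gives $|1 + e(-x/2)e(y) + e(2y)| = |e(-x/2) + 2\cos(2\pi y)|$, whence
\begin{equation*}
|f(x, y - x/2)|^2 = 1 + 4\cos^2(2\pi y) + 4\cos(\pi x)\cos(2\pi y).
\end{equation*}
The Chebyshev substitution $v = \cos(2\pi y)$ then produces $F_p(x) = H_p(\cos(\pi x))$, where
\begin{equation*}
H_p(t) := \frac{1}{\pi}\int_{-1}^{1}\frac{(1+4v^2+4tv)^{p/2}}{\sqrt{1-v^2}}\, dv.
\end{equation*}

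The central step is to show that $H_p$ is strictly increasing on $[0,1]$ for $p>2$, strictly decreasing for $0<p<2$, and constant for $p=2$. The evenness $H_p(-t)=H_p(t)$ follows at once from $v\mapsto -v$. Differentiating under the integral and then symmetrizing about $v=0$ yields
\begin{equation*}
H_p'(t) = \frac{2p}{\pi}\int_0^1 \frac{v\bigl[\,(1+4v^2+4tv)^{p/2-1} - (1+4v^2-4tv)^{p/2-1}\,\bigr]}{\sqrt{1-v^2}}\, dv.
\end{equation*}
For $t,v\in(0,1]$ both arguments $1+4v^2\pm 4tv$ are strictly positive with the $+$ one strictly larger, and since $s\mapsto s^{p/2-1}$ is strictly increasing for $p>2$ and strictly decreasing for $0<p<2$, the bracket inherits the sign of $p-2$. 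This forces the claimed strict monotonicity of $H_p$ on $[0,1]$. (For $p=2$ the bracket vanishes, in agreement with Parseval applied to $f(x,\cdot)$, which gives $F_2\equiv 3$.)

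Finally, $x\mapsto\cos(\pi x)$ is a strictly decreasing bijection of $[0,1/2]$ onto $[0,1]$, so the composition $F_p(x) = H_p(\cos(\pi x))$ is strictly decreasing on $[0,1/2]$ for $p>2$ and strictly increasing for $0<p<2$. Combined with the evenness of $F_p$ on $\TT$ already recorded in the remark preceding the proposition, this gives a unique strict global maximum of $F_p$ on $\TT$, located at $x=0$ for $p>2$ and at $x=1/2$ (identified with $-1/2$ in $\TT$) for $p<2$. The only delicate point is the sign identification in the symmetrized expression for $H_p'$; the rest is mechanical.
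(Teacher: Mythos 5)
Your argument is correct and, after the substitutions $t=\cos(\pi x)$ and $v=\cos(2\pi y)$, it reduces to exactly the computation in the paper: the sign of $H_p'$ is governed by $(1+4v^2+4tv)^{p/2-1}-(1+4v^2-4tv)^{p/2-1}$, which is literally the paper's quantity $|2e(x/2)\cos(2\pi y)+1|^{p-2}-|2e(x/2)\cos(2\pi y)-1|^{p-2}$ written algebraically, so the two proofs are essentially the same. One small slip: you assert that for $t,v\in(0,1]$ both $1+4v^2\pm 4tv$ are strictly positive, but $1+4v^2-4tv=0$ at $t=1$, $v=1/2$; restricting to $t\in(0,1)$ (as the paper does, working on $0<x<1/2$) and invoking continuity of $H_p$ at $t=1$ fixes this without further change.
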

\begin{proof}
Since $F_p$ is even, it suffices to prove that it is monotonic on
$[0, \frac 12]$, with the required monotonicity. Note that
$$
|f(x,y)|=\left|2e(x/2)\cos \left(\pi(x+2 y)\right) +1\right|.
$$
So
\begin{align*}
F_p(x)&=\int_{-1/2}^{1/2} \left|2e(x/2)\cos (2\pi y) +1\right|^p
dy
\\ &= \int_{-1/4}^{1/4}\left(|2e(x/2)\cos (2\pi y) +1|^p
+|2e(x/2)\cos (2\pi y) -1|^p\right)dy.
\end{align*}
It is sufficient to show that for fixed $y\in (-\frac 14, \frac
14)$ the quantity
$$
\Phi(x,y):=|2e(x/2)\cos(2\pi y) +1|^p +|2e(x/2)\cos (2\pi y) -1|^p
$$
is monotonic in $x$ for $0<x<\frac 12$. Considering its derivative
\begin{align*}
\frac{\partial \Phi}{\partial x}(x,y)=&-2p\pi \sin(\pi x)\cos(2\pi
y)
\\ & \times \left\{ |2e(\frac x2)\cos(2\pi y) +1|^{p-2} - |2e(\frac
x2)\cos (2\pi y) -1|^{p-2} \right\}
\end{align*}
we find that its signum is the opposite of the signum of the
difference in the second line. It follows that $\Phi$, hence $F_p$
has a strict global maximum at zero when $p>2$ and a strict global
maximum at $1/2$ when $p<2$.
\end{proof}
This concludes for the existence of a peaking function at $0$ for
$p>2$, and for a peaking function at $1/2$ for $p<2$.

\smallskip

We will need the following lemma later on.
\begin{lemma}\label{smooth} The
function $F_p$ is a $\mathcal{C}^2$ function for $p>2$ and its
second derivative at $0$ is strictly negative. For all values of
$p$ it is a $\mathcal{C}^\infty$ function outside $0$. Its second
derivative at $1/2$ is strictly negative for $p<2$.
\end{lemma}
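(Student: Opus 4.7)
The plan is to reduce the problem to the analysis of a single-variable function. From the identity $|f(x,y)|^2 = 4\cos^2(\pi(x+2y)) + 4\cos(\pi(x+2y))\cos(\pi x) + 1$ already used in the proof of Proposition \ref{prop:peakpol2}, a translation of the integration variable by $x/2$ (permitted by $1$-periodicity of the integrand) gives the factorisation
\begin{equation*}
F_p(x) = \Phi(\cos(\pi x)), \qquad \Phi(u) := \int_0^1 \bigl(4\cos^2(2\pi y) + 4u\cos(2\pi y) + 1\bigr)^{p/2}\, dy.
\end{equation*}
For $u \in (-1,1)$, the quadratic $4c^2 + 4uc + 1$ (in $c = \cos(2\pi y)$) attains its minimum $1-u^2 > 0$ at $c = -u/2$, so the integrand is smooth in $(y,u)$ and uniformly bounded away from $0$ on compact sub-intervals of $(-1,1)$. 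Hence $\Phi \in \mathcal{C}^\infty(-1,1)$, and composing with the analytic diffeomorphism $x \mapsto \cos(\pi x)$ of $(0,1)$ onto $(-1,1)$ gives $F_p \in \mathcal{C}^\infty(\TT \setminus \{0\})$, settling the second assertion.

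At $x = 1/2$ (hence $u = 0$), twofold differentiation under the integral sign is legitimate and yields
\begin{equation*}
F_p''(1/2) = \pi^2 \Phi''(0) = 4\pi^2 p(p-2) \int_0^1 \cos^2(2\pi y)\bigl(4\cos^2(2\pi y) + 1\bigr)^{p/2 - 2}\, dy.
\end{equation*}
The integral is strictly positive; for $p < 2$ the prefactor $p - 2$ is negative, so $F_p''(1/2) < 0$ as claimed.

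At $x = 0$ (hence $u = 1$), the integrand of $\Phi$ vanishes quadratically at $c = -1/2$, i.e. at $y = \pm 1/3$. To analyse the resulting singularity I would introduce local coordinates $\eta := 2\cos(2\pi y) + 1$ and $\sigma := 1 - u$: near the zeros the integrand becomes $\bigl(\eta^2 + 2\sigma + O(\eta\sigma)\bigr)^{p/2}$, and the scaling $\eta = \sqrt{2\sigma}\, t$ shows that the singular contribution to $\Phi(u)$ takes the form $S(\sigma) + C\sigma^{(p+1)/2}$, with $S$ smooth in $\sigma$. Since $\sigma = 1 - \cos(\pi x) = \frac{\pi^2}{2} x^2 \bigl(1 + O(x^2)\bigr)$ is an even analytic function of $x$, pulling this decomposition back yields
\begin{equation*}
F_p(x) = E(x) + C'|x|^{p+1}\bigl(1 + O(x^2)\bigr)
\end{equation*}
near $x = 0$, with $E$ an even analytic function of $x$. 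For $p > 2$, the function $|x|^{p+1}$ is of class $\mathcal{C}^2$ at $0$ with vanishing first and second derivatives; hence $F_p \in \mathcal{C}^2$ near $0$, and from $F_p = \Phi \circ \cos(\pi \cdot)$ we compute
\begin{equation*}
F_p''(0) = -\pi^2 \Phi'(1) = -2\pi^2 p \int_0^1 \cos(2\pi y)\, |2\cos(2\pi y) + 1|^{p-2}\, dy.
\end{equation*}

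To finish, I would show that the last integral is strictly positive for $p > 2$. The symmetry $\cos(2\pi(y+1/2)) = -\cos(2\pi y)$ applied via the change of variable $y \mapsto y - 1/2$ on $[1/2,1]$ recasts it as
\begin{equation*}
\int_0^{1/2} \cos(2\pi y) \bigl\{ |2\cos(2\pi y) + 1|^{p-2} - |1 - 2\cos(2\pi y)|^{p-2} \bigr\}\, dy.
\end{equation*}
The elementary identity $|2c+1|^2 - |1-2c|^2 = 8c$ shows that $|2c+1| > |1-2c|$ iff $c > 0$; hence, for $p > 2$, the bracketed factor has the same sign as $c$, so the integrand is pointwise non-negative and strictly positive on a set of positive measure. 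The integral is therefore strictly positive, and $F_p''(0) < 0$. The principal technical step in this plan is the scaling argument at $u = 1$: the corrections coming from the $O(\eta\sigma)$ piece of the local expansion can be shown to contribute at orders $|x|^{p+3}$ and higher, which is harmless for $\mathcal{C}^2$-regularity when $p > 2$ and leaves the formula for $F_p''(0)$ intact.
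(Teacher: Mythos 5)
Your proposal is correct, but it follows a genuinely different and much more computational route than the paper. The paper's own proof is soft: for $p>2$ the map $z\mapsto|z|^p$ is of class $\mathcal{C}^2$ on $\CC$, so one may differentiate twice under the integral sign in \eqref{eq:marginalnorm} to get $F_p\in\mathcal{C}^2$ with $F_p''(0)=\int_0^1\Phi''_{xx}(0,y)\,dy$; the monotonicity already established in the proof of Proposition \ref{prop:peakpol2} forces $\Phi''_{xx}(0,y)\le0$ for every $y$, and since this quantity is not identically zero the integral is strictly negative (and likewise at $1/2$ for $p<2$, where $f$ does not vanish so no regularity issue arises). Your reduction $F_p(x)=\Phi(\cos\pi x)$ is an attractive alternative: it yields closed-form expressions for $F_p''(1/2)$ and $F_p''(0)$ where the paper only gets signs, and your symmetrization argument showing $\int_0^1\cos(2\pi y)\,|2\cos(2\pi y)+1|^{p-2}dy>0$ is a clean, quantitative replacement for the paper's ``clearly not identically $0$''. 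The price is that the $\mathcal{C}^2$-regularity at $0$, which the paper gets for free from $|\cdot|^p\in\mathcal{C}^2(\CC)$, now rests on the asymptotic expansion of $\Phi(1-\sigma)$ as $\sigma\to0^+$, which you only sketch. That expansion is standard but should be stated with care: after completing the square the local model is $\int(\zeta^2+\rho)^{p/2}g(\zeta)\,d\zeta$ with $\rho=\sigma(2-\sigma)$ and $g$ smooth, whose expansion is $A(\rho)+B(\rho)\rho^{(p+1)/2}$ with $A,B$ smooth, \emph{except} that for odd integer $p$ an additional $\rho^{(p+1)/2}\log\rho$ term appears; moreover $E$ is smooth rather than analytic. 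Neither point damages the conclusion, since every such correction pulls back to $O\bigl(|x|^{p+1}\log|x|\bigr)=o(x^2)$ when $p>2$, but the clause ``$S(\sigma)+C\sigma^{(p+1)/2}$ with $S$ smooth'' is not literally exact for all $p$. With these caveats filled in, your argument is complete and, if anything, more informative than the paper's.
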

\begin{proof} For $p>2$  the smoothness of the composite function
follows from smoothness of $|\cdot|^p$. We already know from
monotonicity of $\Phi(x,y)$ for fixed $y$ that
$\Phi_{xx}^{''}(0,y)$ is non positive. Since it is clearly not
identically $0$, it is somewhere strictly negative, hence
$F^{''}_p(0)<0$. To prove that $F_p$ is a $\mathcal{C}^\infty$
function outside $0$, it is sufficient to remark that $f(x,y)$
does not vanish for $x\neq 0$. The same reasoning as above gives
the sign of the second derivative at $1/2$.
\end{proof}
\smallskip

\begin{proof}[Proof of Proposition \ref{prop:peakinghalf}]
Let us now concentrate on peaking functions at $1/2$ for $p>2$ not
an even integer and prove Proposition \ref{prop:peakinghalf}. We
will prove the following, which relies entirely on the methods of
Mockenhaupt and Schlag \cite{MS}, but tailored to our needs with
introducing also a second variable and slightly changing the
occurring idempotents, too.
\begin{proposition}\label{prop:smooth-half}
Let $p>2$ not an even integer. For $k$ an odd number that is
larger than $p/2$, the bivariate idempotent function
\begin{equation}\label{eq:MSbivariate}
g (x, y):=(1+e_1(x)e_k(y))(1+e_1(x)e_{k+1}(y))
\end{equation}
is such that its marginal integral $G_p(x):=\int_\TT |g(x,y)|^p
dy$ has a strict maximum at $1/2$. Moreover, it is a
$\mathcal{C}^4$ function, whose second derivative at $1/2$ is
strictly negative.
\end{proposition}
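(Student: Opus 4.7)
The plan is to obtain an explicit Fourier expansion of $G_p$ on $\TT$ and to read off all three claims---regularity, strict maximum, and negative curvature at $1/2$---directly from it. The starting point is the identity $|1+e^{i\theta}|^2 = 4\cos^2(\theta/2)$, which rewrites
\begin{equation*}
|g(x,y)|^p = 4^p\,u(x+ky)\,u(x+(k+1)y),\qquad u(t):=|\cos \pi t|^p.
\end{equation*}
Writing $u(t)=\sum_{n\in\ZZ} b_n\,e(nt)$ with $b_{-n}=b_n\in\RR$, substituting into $G_p$, and integrating in $y$ retains only the pairs $(n,m)\in\ZZ^2$ satisfying $nk+m(k+1)=0$; since $\gcd(k,k+1)=1$, these are $(n,m)=(-(k+1)r,\,kr)$ for $r\in\ZZ$, and so
\begin{equation}\label{eq:G-fourier-plan}
G_p(x) = 4^p\Bigl(b_0^2 + 2\sum_{r\geq 1} b_{kr}\,b_{(k+1)r}\cos(2\pi r x)\Bigr).
\end{equation}

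Next I would invoke the classical closed form
\begin{equation*}
b_n = \frac{\Gamma(p+1)}{2^p\,\Gamma(\tfrac p2+n+1)\,\Gamma(\tfrac p2-n+1)},
\end{equation*}
from which Stirling (or the reflection formula) yields $|b_n|\asymp n^{-(p+1)}$, hence $r^4|b_{kr}b_{(k+1)r}|=O(r^{2-2p})$, a summable bound since $p>2$; this upgrades \eqref{eq:G-fourier-plan} to a $\mathcal{C}^4(\TT)$ Fourier series. The hypothesis $p\notin 2\NN$ is essential here: it guarantees $p/2\notin\ZZ$, so no factor $\Gamma(\tfrac p2-n+1)$ hits a pole and every $b_n$ is finite and nonzero. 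Moreover, for $n>p/2$ the argument $\tfrac p2-n+1$ sits strictly between two consecutive non-positive integers, and since $\Gamma$ flips sign each time its real argument crosses such an integer, the sequence $b_n$ \emph{strictly alternates} in sign once $n\geq n_0:=\lceil p/2\rceil$. Because $k>p/2$ forces every index $kr,\,(k+1)r$ ($r\geq 1$) into this alternating regime, we obtain
\begin{equation*}
\operatorname{sign}(b_{kr}b_{(k+1)r}) = (-1)^{(kr-n_0)+((k+1)r-n_0)} = (-1)^{(2k+1)r} = (-1)^{r}.
\end{equation*}

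Substituting $b_{kr}b_{(k+1)r}=(-1)^r|b_{kr}b_{(k+1)r}|$ in \eqref{eq:G-fourier-plan} and using $(-1)^r\cos(2\pi r x)=\cos(2\pi r(x-1/2))$ converts $G_p$ into a non-negative cosine series recentered at $1/2$:
\begin{equation*}
G_p(x) = 4^p\Bigl(b_0^2 + 2\sum_{r\geq 1}|b_{kr}b_{(k+1)r}|\cos\bigl(2\pi r(x-1/2)\bigr)\Bigr).
\end{equation*}
Its maximum is therefore attained at $x=1/2$, strictly so because $b_k b_{k+1}\neq 0$ (Gamma formula) and $\cos(2\pi(x-1/2))<1$ whenever $x\neq 1/2\pmod 1$. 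Term-by-term second differentiation, which is legitimate thanks to the Fourier decay above, then yields
\begin{equation*}
G_p''(1/2) = -8\pi^2\cdot 4^p\sum_{r\geq 1} r^2|b_{kr}b_{(k+1)r}| < 0,
\end{equation*}
again via the nonvanishing $r=1$ term. The hardest step in the whole scheme is this sign-alternation of $b_n$: it is what aligns \emph{every} mode of $G_p-b_0^2$ to a common maximum at $1/2$, and it is precisely the mechanism underlying Theorem~\ref{th:MMS} (indeed, for $k$ odd the translation $y\mapsto y+1/2$ identifies $g(1/2,\cdot)$ with the Mockenhaupt--Schlag polynomial $g_{MS}$, so the inequality $G_p(1/2)>G_p(0)$ is just the $r$-odd part of \eqref{eq:G-fourier-plan} read through their theorem).
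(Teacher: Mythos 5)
Your argument is correct and follows essentially the same route as the paper: the identical Fourier expansion of $G_p$ in terms of the coefficients of $|\cos\pi t|^p$, followed by the observation that the products $b_{kr}b_{(k+1)r}$ carry the sign $(-1)^r$ once $k>p/2$, which recenters $G_p$ as a nonnegative cosine series at $1/2$ and gives both the strict maximum and $G_p''(1/2)<0$. The only cosmetic differences are that you deduce the sign pattern from the Gamma-function closed form and pole crossings whereas the paper uses the Mockenhaupt--Schlag recurrence $c_{n+1}=\tfrac{n-p/2}{n+p/2+1}\,c_n$ (with $c_n=(-1)^n2^pb_n$), and you obtain $\mathcal{C}^4$ regularity from Fourier decay where the paper invokes the convolution of two $\mathcal{C}^2$ functions; both are sound.
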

\begin{proof}
After a change of variables, we see that
$$
G_p(x)=4^p \int_0^1 |\cos (\pi k y)|^p\left|\cos
\left(\pi(k+1)(y-\frac x {k(k+1)})\right)\right|^pdy.
$$
The smoothness of $G_p$ follows from the fact that it is the
convolution of two functions of class $\mathcal{C}^2$. Mockenhaupt
and Schlag have computed that
$$
2^p |\cos (\pi y)|^p=\sum_{n}(-1)^n c_n e^{2 i \pi  ny}
$$
with real coefficients $c_n=c_{-n}$, such that, for non negative
$n$,
$$
c_{n+1}=\frac{n-\frac p2}{n+\frac p2 +1}\, c_n.
$$
In the convolution, only frequencies that are multiples of both
$k$ and $k+1$ are present, so that
$$
G_p(x)=\sum_{n}(-1)^n c_{kn}c_{(k+1)n}e^{2 i \pi nx}.
$$
Indeed, the Fourier coefficient $\widehat {G_p}(n)$ is equal to
$c_m c_{m'}$, where $km=(k+1)m'$, and $n=m'/k$, which gives also
$m=(k+1)n$.

Now, looking at the inductive formula for the coefficients, and
using the fact that all $c_{kn}c_{(k+1)n}$ are positive for
$k>p/2$, we find that $G_p$ is maximum when $e^{2 i \pi
nx}=(-1)^n$ for all $n$, that is, for $x=1/2$. The computation of
the Fourier series of its second derivative implies that it is
strictly negative at this point.
\end{proof}

It remains to prove that we have the gap peaking property at $0$
for $0<p<2$. It could be deduced from the theorems below, but we
can also build on the construction of Mockhenhaupt and Schlag.
Indeed, consider for $0<p<2$, the bivariate idempotent
$$
h(x, y):=(1+e_1(y))(1+e_1(x)e_{3}(y)).
$$
Using the computations of Mockenhaupt and Schlag, similarly to the
above it is again straightforward to see that the $p$-th marginal
integral $H_p(x):=\int_{\TT} |h(x,y)|^pdy$ has a strict maximum at
$0$.

This concludes the proof of Proposition \ref{prop:peakinghalf}.
\end{proof}

\section{Restriction to a discrete problem of concentration}\label{sec:oneconc}

The second step of our proof consists of restricting the problem
of $p$-concentration of an idempotent polynomial on a small
interval into the one of concentration of an idempotent polynomial
at one point of either of the two discrete grids
\begin{equation}\label{def:grid} \mathbb{G}_q:=
\frac 1q \ZZ/q\ZZ \qquad \qquad \mathbb{G}_q^\star:=\frac 1{2q} +
\frac 1q \ZZ/q\ZZ.
\end{equation}
The idea is that if we take a gap-peaking polynomial $T$, then
multiplication by $T(qx)$ will concentrate integrals on a
neighborhood of the grid: for the first grid we need $T$ to be
peaking at $0$, and for the second one we need $T$ to do so at
$1/2$.

\begin{definition}\label{projection} For $f\in\mathcal{T}$ we
denote by $\mathbf{\Pi}_q(f)$ the polynomial in $\mathcal{T}_q$
which coincides with $f$ on the grid $\mathbb{G}_q$, that is, the
polynomial having Fourier coefficients
$$\widehat{\mathbf{\Pi}_q(f)}(k):=\sum_{j\in \NN} \widehat f(k+jq), \qquad k=0,1,\cdots,
q-1.$$
\end{definition}
In particular, if $f$ is positive definite, so is
$\mathbf{\Pi}_q(f)$. However, in general the class of idempotent
polynomials is not preserved by this projection.

\medskip

Let us first define concentration  on $\mathbb{G}_q$.

\begin{definition}
We shall say that there is $p$-concentration at $a/q$ on
$\mathbb{G}_q$ with constant $c>0$ if there exists an idempotent
polynomial $R$ such that
\begin{equation}\label{eq:discreteconc}
\left|R\left(\frac aq\right) \right|^p > c \sum_{k=0}^{q-1}
\left|R\left(\frac kq\right) \right|^p.
\end{equation}
\end{definition}
The next well-known lemma (see \cite{DPQ, Many} etc.) allows to
restrict to $a=1$.
\begin{lemma}\label{l:homominvariance}
Assume that there is $p$-concentration at $1/q$ on $\mathbb{G}_q$
with constant $c$, that is, with some appropriate idempotent $R$
we have
\begin{equation}\label{eq:discreteconc1}
\left|R\left(\frac 1q\right) \right|^p > c \sum_{k=0}^{q-1}
\left|R\left(\frac kq\right) \right|^p.
\end{equation}
Let now $a\in \NN$, $0<a<q$ be a natural number so that $a$ and
$q$ are relatively prime. Then there is also $p$-concentration at
$a/q$ on $\mathbb{G}_q$ with constant $c$: that is,
\eqref{eq:discreteconc1} implies \eqref{eq:discreteconc} with some
appropriately chosen (possibly different) idempotent $R$.
\end{lemma}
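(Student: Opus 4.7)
The plan is to exploit the multiplicative structure of the ring $\ZZ/q\ZZ$: since $a$ is coprime to $q$, multiplication by $a$ permutes the residues mod $q$, and the inverse of this operation can be realized at the level of the polynomial by a frequency dilation that preserves the class $\PP$.

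More precisely, since $\gcd(a,q)=1$, choose a positive integer $b$ with $ab\equiv 1 \pmod q$. Starting from the given idempotent $R(x)=\sum_{h\in H} e(hx)$ satisfying \eqref{eq:discreteconc1}, I would define
\[
R'(x) := R(bx) = \sum_{h\in H} e(bh\cdot x).
\]
Because $b\in\NN$ and the map $h\mapsto bh$ is injective on $\NN$, the frequencies $\{bh : h\in H\}$ form a finite subset of $\NN$ with distinct elements; hence $R'\in\PP$.

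The rest is a direct evaluation. By the choice of $b$ we have $ab = 1 + mq$ for some nonnegative integer $m$, so by the $1$-periodicity of $R$
\[
R'\!\left(\frac{a}{q}\right) = R\!\left(\frac{ab}{q}\right) = R\!\left(\frac{1}{q}+m\right) = R\!\left(\frac{1}{q}\right).
\]
Moreover, as $k$ runs through $\{0,1,\dots,q-1\}$, the residue $bk \bmod q$ also runs through the same set because $b$ is invertible in $\ZZ/q\ZZ$; using periodicity of $R$ once more,
\[
\sum_{k=0}^{q-1}\left|R'\!\left(\frac{k}{q}\right)\right|^p = \sum_{k=0}^{q-1}\left|R\!\left(\frac{bk}{q}\right)\right|^p = \sum_{k=0}^{q-1}\left|R\!\left(\frac{k}{q}\right)\right|^p.
\]
Plugging these two identities into the assumed inequality \eqref{eq:discreteconc1} for $R$ yields \eqref{eq:discreteconc} for $R'$ at the point $a/q$ with the same constant $c$.

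There is no real obstacle here: the only point that deserves attention is checking that the dilation $x\mapsto bx$ keeps us inside the class $\PP$ of idempotents with non-negative frequencies (which is immediate because $b>0$) and that the permutation argument for the denominator set is legitimate (which is the standard fact that multiplication by a unit in $\ZZ/q\ZZ$ is a bijection). This is why the lemma is really a statement about the group of units of $\ZZ/q\ZZ$ acting on the grid $\GG$, and the same proof would fail if $a$ and $q$ shared a common factor.
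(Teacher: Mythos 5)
Your proof is correct and is essentially identical to the paper's: both take the multiplicative inverse $b$ of $a$ modulo $q$, pass to the dilated idempotent $Q(bx)$, and use that multiplication by the unit $b$ permutes the grid $\GG$ while fixing the value at $a/q$ to equal the original value at $1/q$.
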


\begin{proof}
Let $Q$ be the idempotent that satisfies (\ref{eq:discreteconc1}).
 Let now
$a\not\equiv 0,1$ (mod $q$, of course) be another value, coprime
to $q$. We then have a multiplicative inverse $b$ of $a$ $\mod q$
so that $1\leq b < q$ and $ab \equiv 1$ mod $q$. With this
particular $b$ we can consider
\begin{equation}\label{eq:Rdef}
R(x):=Q(bx).
\end{equation}
Clearly we have $R(0)=Q(0)$, $R(a/q)=Q(ab/q)=Q(1/q)$, and the
values of $R(j/q)=Q(jb/q)$ with $j=0,\dots,q-1$ will cover all
values of $Q(k/q)$ with $k=0,1,\dots,q-1$, exactly once each.
Therefore, we conclude that (\ref{eq:discreteconc}) holds with $a$
and $R$.
\end{proof}
\begin{remark}\label{degree} If $Q$ is in $\PP_q$, then instead of $Q(bx)$ we can take
for $R$ the polynomial in $\mathcal{T}_q$ which coincides with
$Q(bx)$ on the grid $\mathbb{G}_q$, that is, the polynomial
$\mathbf{\Pi}_q(Q(b\,\cdot))$ of Definition \ref{projection}.
Indeed, it is also an idempotent polynomial since $b$ and $q$ are
coprime.
\end{remark}

So now it makes sense to formally define the following
concentration coefficient.
\begin{definition}\label{def:cpnumq} We define, for $q\in\NN$,
\begin{equation}\label{eq:cpq}
c_p^\sharp(q) := \sup_{R\in \PP} \frac{\left|R\left(\frac
1q\right) \right|^p}{ \sum_{k=0}^{q-1} \left|R\left(\frac
kq\right) \right|^p},
\end{equation}
and
\begin{equation}\label{eq:cplim}
c_p^\sharp := \liminf_{q\rightarrow \infty} c_p^\sharp(q).
\end{equation}
\end{definition}

We want to extend concentration results on discrete point grids to
the whole of $\TT$, and keep track of constants. We state this as
a proposition.
\begin{proposition}\label{discret2cont} Let $p>0$ be such that
there is full $p$-concentration with gap at $0$. If $c_p^\sharp
>0$, then $p$-concentration holds for the whole of $\TT$, and we
have the inequality
\begin{equation}\label{constant}
c_p \geq 2  c_p^\sharp .
\end{equation}
Moreover, the same level of concentration holds with gap.
\end{proposition}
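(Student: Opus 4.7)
The plan is to transfer the discrete concentration of an idempotent $R$ on the grid $\mathbb{G}_q$ into continuous $L^p$-concentration on $E$, by forming a product $f(x) := R(Mx)\,T(qx)$, where $T$ is a gap-peaking idempotent at $0$ (furnished by Proposition \ref{th:peakingatzero}), and $M$ is a dilation factor whose purpose is to enlarge the gaps of $f$. The point is that $|T(qx)|^p$ concentrates its $L^p$-mass in tiny neighborhoods of the grid points $k/q$, so the global integral of $|f|^p$ is essentially $\sum_k |R(k/q)|^p$ weighted by $\frac{1}{q}\int_\TT|T|^p$, while the integral on $E$ picks up precisely the contributions from the grid points $\pm a/q$ lying inside $E$. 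The factor $2$ in $c_p \geq 2 c_p^\sharp$ arises from the symmetry: $|R|$ is even because the coefficients of $R$ are real, so $|R(a/q)|=|R(-a/q)|$, and both $\pm a/q$ lie in $E$ by symmetry of $E$.

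First, I would fix a symmetric nonempty open $E\subset\TT$, $\eps>0$, and a prescribed gap $N>0$. Since $c_p^\sharp = \liminf_q c_p^\sharp(q)$, I take $q$ arbitrarily large (and prime for simplicity, so every nonzero residue is coprime to $q$) with $c_p^\sharp(q) > c_p^\sharp - \eps$, and $a\in\{1,\dots,q-1\}$ such that $[\pm a/q - 2\delta_0/q,\,\pm a/q + 2\delta_0/q] \subset E$ for some $\delta_0>0$ (possible for $q$ large, by density of $\{a/q\}_a$). By Lemma \ref{l:homominvariance} together with Remark \ref{degree}, there is $R\in\PP_q$ with
$$|R(a/q)|^p \;>\; (c_p^\sharp - 2\eps)\,\sum_{k=0}^{q-1} |R(k/q)|^p.$$
Then I pick a large integer $M\equiv 1\pmod q$ (automatically $\gcd(M,q)=1$), and finally, by Proposition \ref{th:peakingatzero}, an idempotent $T$ with gap larger than some $N_T$ and satisfying $\int_{-\delta}^{\delta} |T|^p > (1-\eps)\int_\TT |T|^p$, for $\delta<\delta_0$ to be pinned down.

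The frequencies of $R(Mx)$ lie in $M\cdot\{0,\dots,q-1\}$ and those of $T(qx)$ in $q\NN$ with consecutive gaps $\geq qN_T$. From $\gcd(M,q)=1$ one sees, as in the proof of Lemma \ref{l:homominvariance}, that the frequencies $Mh+qm$ of $f$ are all distinct, hence $f\in\PP$. Within a block (fixed $m$) the gap of $f$ is at least $M$, and between consecutive blocks it is at least $qN_T - M(q-1)$; choosing $M\geq N$ and then $N_T$ so large that $qN_T - M(q-1)\geq N$ gives $f$ a gap larger than $N$. Crucially, because $M\equiv 1\pmod q$, one has $R(Mk/q)=R(k/q)$ for every $k\in\{0,\dots,q-1\}$.

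The change of variables $y=qx-k$ on the intervals $[k/q-\tfrac{1}{2q},\,k/q+\tfrac{1}{2q}]$ then yields
$$\int_\TT |f|^p \;=\; \frac{1}{q}\sum_{k=0}^{q-1} \int_{-1/2}^{1/2} \left|R\!\left(\tfrac{k}{q}+\tfrac{My}{q}\right)\right|^p |T(y)|^p\, dy.$$
By Bernstein's inequality, $\|R'\|_\infty \leq 2\pi q\|R\|_\infty$, so for $|y|\leq\delta$ the argument above is $R(k/q)$ up to an error $O(M\delta\|R\|_\infty)$; choosing $\delta$ so small that $M\delta\|R\|_\infty$ is negligible, and combining with the peaking of $T$, I obtain
$$\int_\TT |f|^p \;\approx\; \tfrac{1}{q}\int_\TT|T|^p \sum_{k=0}^{q-1} |R(k/q)|^p, \qquad \int_E |f|^p \;\geq\; \tfrac{1}{q}\int_\TT|T|^p \cdot 2|R(a/q)|^p\,(1-O(\eps)).$$
The ratio is thus at least $2(c_p^\sharp - O(\eps))$, and letting $\eps\to 0$ yields $c_p\geq 2c_p^\sharp$, the corresponding polynomial having gap $>N$. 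The main obstacle is the coordinated choice of parameters in the correct order: first $q$ (large, prime), then $R$, then $M\geq N$ with $M\equiv 1\pmod q$, then $\delta$ so small that $M\delta\|R\|_\infty$ is much smaller than a typical nonzero $|R(k/q)|$, and only then $T$ with gap $N_T \geq (N + M(q-1))/q$ and peaking radius $\delta$.
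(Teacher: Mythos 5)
Your proposal is correct and follows essentially the same route as the paper's proof: one multiplies the (dilated) discretely concentrating idempotent by the gap-peaking factor $T(qx)$, splits $\TT$ into small neighbourhoods of the grid points and their complements, and uses the evenness of $|f|$ together with the symmetry of $E$ to gain the factor $2$. The only (cosmetic) differences are that you build the dilation $R(Mx)$, $M\equiv 1 \pmod q$, into the construction from the outset — the paper introduces $R((Mq+1)x)$ only at the end to secure the gaps — and that you quantify the near-constancy of $R$ on the intervals $I_k$ via Bernstein's inequality rather than plain uniform continuity.
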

\begin{proof}
Let us fix a symmetric open set $E$ and construct a related
peaking idempotent. First, there exists some interval $J:= \left[
\frac aq - \frac{1}{2q},\frac aq + \frac{1}{2q} \right]$ with
$(a,q)=1$, such that $J$ and $-J$ are contained in $E$. We fix $R$
that gives the $p$-concentration at $a/q$ on $\mathbb{G}_q$ with a
constant $C$: this can be done with $C$ arbitrarily close to
$c_p^\sharp(q)$ in view of Lemma \ref{l:homominvariance}.

Now, let $\varepsilon$ be given. By uniform continuity we may
choose $0<\delta<1/2$   so that we have the inequalities
\begin{equation}\label{main-rest}
  |R(t+a/q)|^p \geq |R(a/q)|^p
-\varepsilon\left|R(a/q)\right|^p \;,  \qquad (|t|\leq \delta/q)
\end{equation}
and, for $k=0,1,\cdots, q-1$,
$$|R(t+k/q)|^p \leq
|R(k/q)|^p+ \e |R(0)|^p , \qquad (|t|\leq \delta/q)$$
 which implies immediately
\begin{equation}\label{eq:RmaxonIk}
\sum_{k=0}^{q-1}|R(t+k/q)|^p \leq
(1+q\e)\sum_{k=0}^{q-1}|R(k/q)|^p. \qquad (|t|\leq \delta/q)
\end{equation}
Once $\delta$ is chosen, we will take $T$ a gap-peaking idempotent
at $0$, provided by Proposition \ref{th:peakingatzero} -- compare
also \eqref{eq:gapquestion2}-\eqref{eq:concentration} -- with the
given $\varepsilon$, $\delta$ as above, and $N$ larger than the
degree of $R$, so that
\begin{equation}\label{eq:Sdef}
S(x):=R(x)T(qx)
\end{equation}
is an idempotent, too.  It remains to show
\begin{equation}\label{eq:SonTT}
2C\int_{\TT} |S|^p\leq \kappa(\e)\int_E |S|^p ,
\end{equation}
with $\kappa(\varepsilon)$ getting arbitrarily close to $1$ when
$\varepsilon$ is chosen appropriately small.

Denoting $\tau^p:=\int_{\TT}|T|^p$ and $I:= [ \frac aq -
\frac{\delta}{2q},\frac aq + \frac{\delta}{2q}]$, we find
\begin{align}\label{eq:SonJ}
\frac 12 \int_E |S|^p \geq \int_{J} |S|^p & \geq (1-
\varepsilon)\left|R(a/q)\right|^p \int_{I} |T(qx)|^pdx \notag \\ &
\geq (1- \varepsilon)\left|R(a/q)\right|^p ~~\frac{1}{q}
\int_{-\delta}^{\delta} |T|^p \notag \\ & \geq
\frac{(1-\varepsilon)^2\tau^p}{q} \left|R(a/q)\right|^p.
\end{align}
We now  estimate the whole integral of $|S|^p$. We define the
intervals
$$
J_k:= \left[ \frac kq - \frac{1}{2q},\frac kq + \frac{1}{2q}
\right], \quad I_k:=\left[ \frac kq - \frac{\delta}{q},\frac kq +
\frac{\delta}{q} \right] \quad (k=0,\dots,q-1).
$$ Then, if
we proceed as in (\ref{eq:SonJ}), using (\ref{eq:RmaxonIk}) this
time, we find that $$\sum_{k=0}^{q-1}\int_{I_k} |S|^p=\int_{I_0}
\sum_{k=0}^{q-1}|R(t+k/q)|^p |T(qt)|^p\leq
\frac{\tau^p}{q}(1+q\e)\sum_{k=0}^{q-1}|R(k/q)|^p,$$ while
\begin{align*}
\int_{J_k\setminus I_k}|S|^p&\leq 2\left|R(0)\right|^p \int_{\frac
kq+\frac \delta q}^{\frac kq+\frac 1{2 q}} |T(qx)|^p dx=\frac
2q\left|R(0)\right|^p \int_{\frac \delta q}^{\frac 1{2}} |T(x)|^p
dx\\&\leq \frac {\varepsilon \tau^p}q \left|R(0)\right|^p\leq
\frac{\varepsilon \tau^p}{q} \sum_{k=0}^{q-1}|R(k/q)|^p.
\end{align*}Taking the sum over $k$ for the last integrals
and adding the above sum for integrals over the $I_k$'s,  we
obtain the estimate
\begin{equation}\label{eq:SonI}
\int_{\TT} |S|^p \leq  \frac{\tau^p}{q}(1+2q\varepsilon)\sum_{k}
|R(k/q)|^p.
\end{equation}
Combining \eqref{eq:SonJ} and \eqref{eq:SonI}, \eqref{eq:SonTT}
obtains with
$\kappa(\varepsilon):=(1-\varepsilon)^{-2}(1+2q\varepsilon)$.
\smallskip

Let us finally prove $p$-concentration with gap. It is sufficient
to remark that instead of taking the polynomial $R$ in
(\ref{eq:Sdef}) we could have as well taken the polynomial
$R((Mq+1)x)$, with $M$ arbitrarily large. From this point, the
proof is identical, since the two polynomials take the same values
on the grid. If the gaps of the peaking idempotent $T$ are taken
large enough, then $S$ will have gaps larger than $M$.
\end{proof}

We can modify slightly the previous proof of Proposition
\ref{discret2cont} to prove concentration results on the
corresponding second grid, using the peaking property with gap at
$1/2$ instead of $0$.
\begin{definition}
We shall say that there is $p$-concentration at $\frac{2a+1}{2q}$
on the grid $\mathbb{G}_q^\star$ with constant $c$ if there exists
an idempotent polynomial $R$ such that
\begin{equation}\label{eq:qqconc}
\left|R\left(\frac {2a+1}{2q}\right) \right|^p > c
\sum_{k=0}^{q-1} \left|R\left(\frac {2k+1}{2q}\right) \right|^p.
\end{equation}
\end{definition}
Remark that in particular we restrict to idempotents $R$ that do
not vanish identically on the grid under consideration, which we
assume in the following definition.
\begin{definition}\label{def:cpetq} If $q\in\NN$, then we
define
\begin{equation}\label{eq:cpqq}
c_p^{\star}(q) := \sup_{R\in \PP} \frac{\left|R\left(\frac
1{2q}\right) \right|^p}{ \sum_{k=0}^{q-1} \left|R\left(\frac
{2k+1}{2q}\right) \right|^p}
\end{equation}
and
\begin{equation}\label{eq:cplim2}
c_p^{\star} := \liminf_{q\rightarrow \infty} c_p^{\star}(q).
\end{equation}
\end{definition}

Again, the first step is to restrict to $1/(2q)$.
\begin{lemma}\label{l:homominvqq}
Assume that there is $p$-concentration at $1/(2q)$ on
$\mathbb{G}_q^\star$ with constant $c$. Let now $a\in \NN$, $0\leq
a<q$ be so that $2a+1$ and $q$ are relatively prime. Then there is
also $p$-concentration at $(2a+1)/(2q)$ on the grid
$\mathbb{G}_q^\star$ with the same constant $c$.
\end{lemma}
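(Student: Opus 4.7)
The plan is to adapt the proof of Lemma \ref{l:homominvariance} by choosing a multiplicative inverse, this time modulo $2q$ rather than $q$. Let $Q\in\PP$ be an idempotent witnessing $p$-concentration at $1/(2q)$ on $\mathbb{G}_q^\star$ with constant $c$, that is,
\[
\left|Q\left(\tfrac{1}{2q}\right)\right|^p > c\sum_{k=0}^{q-1}\left|Q\left(\tfrac{2k+1}{2q}\right)\right|^p.
\]
Given $a$ with $(2a+1,q)=1$, the first observation is that since $2a+1$ is odd, we also have $(2a+1,2)=1$, hence $(2a+1,2q)=1$. Consequently there exists $b\in\NN$, $1\leq b<2q$, with $b(2a+1)\equiv 1 \pmod{2q}$; moreover $b$ must itself be odd. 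I then set $R(x):=Q(bx)$, which is again an idempotent in $\PP$ (the frequencies $bh$, $h\in\supp\widehat Q$, are nonnegative and pairwise distinct).

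The core computation is to see what $R$ does on the grid $\mathbb{G}_q^\star$. By definition of $b$,
\[
R\!\left(\tfrac{2a+1}{2q}\right)=Q\!\left(\tfrac{b(2a+1)}{2q}\right)=Q\!\left(\tfrac{1}{2q}\right).
\]
For the sum, as $k$ runs over $\{0,1,\ldots,q-1\}$, the integers $2k+1$ run over a complete set of representatives of the odd residues modulo $2q$. Since $b$ is odd and coprime to $2q$, multiplication by $b$ induces a bijection on $\ZZ/2q\ZZ$ preserving the parity of residues; hence $k\mapsto b(2k+1)\bmod 2q$ is a permutation of the odd residues. Therefore
\[
\sum_{k=0}^{q-1}\left|R\!\left(\tfrac{2k+1}{2q}\right)\right|^p=\sum_{k=0}^{q-1}\left|Q\!\left(\tfrac{b(2k+1)\bmod 2q}{2q}\right)\right|^p=\sum_{k=0}^{q-1}\left|Q\!\left(\tfrac{2k+1}{2q}\right)\right|^p.
\]
Combining the two identities with the hypothesis on $Q$ yields \eqref{eq:qqconc} with $a$ and $R$, which is the required $p$-concentration at $(2a+1)/(2q)$ on $\mathbb{G}_q^\star$ with the same constant $c$.

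There is no serious obstacle: the only point requiring care is the arithmetic step showing that invertibility of $2a+1$ modulo $q$ upgrades to invertibility modulo $2q$, which is immediate from the oddness of $2a+1$, together with the observation that the resulting inverse $b$ is itself odd and therefore preserves the set of odd residues modulo $2q$ under multiplication. As in Remark \ref{degree}, if one wants $R$ of degree less than $q$, one can replace $Q(b\,\cdot)$ by its projection $\mathbf{\Pi}_q(Q(b\,\cdot))$, which is still an idempotent since $b$ is coprime to $q$.
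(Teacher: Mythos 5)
Your proof is correct and follows essentially the same route as the paper's: pick the inverse $b$ of $2a+1$ modulo $2q$ (noting $b$ is odd), set $R(x):=Q(bx)$, and observe that multiplication by $b$ permutes the odd residues modulo $2q$ so the grid sum is preserved. The only difference is that you spell out the small arithmetic step that $(2a+1,q)=1$ together with oddness gives $(2a+1,2q)=1$, which the paper leaves implicit.
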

\begin{proof}
Let $Q$ be the idempotent that satisfies \eqref{eq:qqconc} with
$a=0$. We then have a multiplicative inverse $b$ of $2a+1$ $\mod
2q$ so that $1\leq b < 2q$ and $(2a+1)b \equiv 1$ mod $2q$; hence,
in particular, also $b$ is odd. Now with this particular $b$ we
can consider $R(x):=Q(bx)$ exactly as before in \eqref{eq:Rdef}.

Clearly we have $R(0)=Q(0)$,
$R((2a+1)/(2q))=Q((2a+1)b/(2q))=Q(1/(2q))$, and the values of
$R(j/(2q))=Q(jb/(2q))$ with $j=0,\dots,2q-1$ will cover all values
of $Q(k/(2q))$ with $k=0,1,\dots,2q-1$, exactly once each, and
such a way, that odd $j$'s correspond to odd $k$'s. Therefore, we
conclude that (\ref{eq:qqconc}) holds with $2a+1$ and $R$.
\end{proof}
\begin{remark}\label{degree-half} As in Remark \ref{degree}, if
$Q$ is in $\PP_{2q}$ then instead of $Q(bx)$ we can take for $R$
the polynomial $\mathbf{\Pi}_{2q}( Q( b\, \cdot))$, which
coincides with $Q(bx)$ at each point of the grid
$\mathbb{G}_{2q}$, hence a priori on $\GS$.
\end{remark}

The corresponding proposition goes as follows:
\begin{proposition}\label{half}
Let $p>0$ be such that there is full $p$-concentration with gap at
$1/2$. If $c_p^{\star}>0$, then $p$-concentration holds for the
whole of $\TT$ and we have the inequality
\begin{equation}\label{constant_second}
c_p \geq 2  c_p^{\star}.
\end{equation}
Moreover, the same property holds with arbitrarily large gaps.
\end{proposition}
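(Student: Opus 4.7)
The plan is to follow the proof of Proposition \ref{discret2cont} line by line, making two substitutions: the grid $\GG$ is replaced by $\GS$, and the gap-peaking idempotent at $0$ is replaced by one at $1/2$, whose existence is precisely the hypothesis of the present proposition. The key observation is unchanged: if $T\in\PP$ peaks at $1/2$, then the dilated function $T(q\,\cdot\,)$ concentrates its $L^p$-mass on small neighborhoods of the points where $qx\in 1/2+\ZZ$, that is, on the grid $\GS=\tfrac{1}{2q}+\tfrac{1}{q}\ZZ/\ZZ$, just as $T(q\,\cdot\,)$ for $T$ peaking at $0$ did for $\GG$.

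Given a nonempty symmetric open set $E\subset\TT$, I would first fix an arbitrarily large integer $q$ and some $a\in\{0,1,\ldots,q-1\}$ such that $2a+1$ is coprime to $q$ and both $J:=\bigl[\tfrac{2a+1}{2q}-\tfrac{1}{2q},\tfrac{2a+1}{2q}+\tfrac{1}{2q}\bigr]$ and $-J$ lie in $E$; this is possible for $q$ large enough since $E$ contains some open arc and a positive proportion of residues satisfy the coprimality condition. By Lemma \ref{l:homominvqq}, one then finds an idempotent $R$ whose ratio in \eqref{eq:qqconc} at $\tfrac{2a+1}{2q}$ is arbitrarily close to $c_p^\star(q)$. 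Choosing $\e>0$, then $\delta>0$ small by uniform continuity, one controls $|R(t+\tfrac{2k+1}{2q})|^p$ in terms of $|R(\tfrac{2k+1}{2q})|^p$ for $|t|\leq\delta/q$ and all $k$, exactly as in \eqref{main-rest}--\eqref{eq:RmaxonIk}. The hypothesis now furnishes a gap-peaking idempotent $T$ at $1/2$ with parameters $\e,\delta$ and gap $N$ larger than the degree of $R$, and I set $S(x):=R(x)T(qx)\in\PP$.

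The estimates then mirror those of Proposition \ref{discret2cont}, with the intervals $J_k:=\bigl[\tfrac{2k+1}{2q}-\tfrac{1}{2q},\tfrac{2k+1}{2q}+\tfrac{1}{2q}\bigr]$ and $I_k:=\bigl[\tfrac{2k+1}{2q}-\tfrac{\delta}{q},\tfrac{2k+1}{2q}+\tfrac{\delta}{q}\bigr]$ partitioning $\TT$ into neighborhoods of $\GS$. Writing $\tau^p:=\int_\TT|T|^p$, on each $I_k$ the localising factor $|T(qx)|^p$ carries essentially all its mass, contributing about $\tau^p/q$ after change of variable, while on $J_k\setminus I_k$ it contributes at most $\e\tau^p/q$ by the peaking property at $1/2$. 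Summing produces an analogue of \eqref{eq:SonI}, and combining with the symmetry bound $\int_E|S|^p\geq\int_J|S|^p+\int_{-J}|S|^p=2\int_J|S|^p$ yields the analogue of \eqref{eq:SonTT} with the constant $2c_p^\star(q)$ in place of $2c_p^\sharp$, up to a factor $\kappa(\e)\to 1$. Letting $\e\to 0$ and taking $q\to\infty$ along a subsequence realising the liminf in \eqref{eq:cplim2} gives $c_p\geq 2c_p^\star$.

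For the gap version, I would replace $R$ by $R\bigl((2Mq+1)\,\cdot\,\bigr)$ with $M$ arbitrarily large; the identity $(2Mq+1)\tfrac{2k+1}{2q}=M(2k+1)+\tfrac{2k+1}{2q}$ shows that this dilation preserves the values of $R$ on $\GS$, while $2Mq+1$ remains coprime to $2q$, so the argument of Remark \ref{degree-half} applies. Taking the gaps of $T$ large enough depending on $M$ ensures $S$ has gaps exceeding $M$. The only point requiring genuine attention is this algebraic one: one must use the multiplier $2Mq+1$ rather than $Mq+1$ as in Proposition \ref{discret2cont}, because $\GS$ is a coset of $\GG$ of order $2$ under the translation action. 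Beyond this small bookkeeping modification, no new obstacle arises.
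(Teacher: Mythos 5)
Your proposal is correct and follows essentially the same path as the paper's proof: both reduce to Proposition \ref{discret2cont} by working on the grid $\GS$ instead of $\GG$, invoking Lemma \ref{l:homominvqq} to transfer concentration to $(2a+1)/(2q)$, and multiplying by a gap-peaking idempotent $T(q\,\cdot\,)$ at $1/2$. The paper leaves the gap version at ``similar way''; your observation that the multiplier must be $2Mq+1$ rather than $Mq+1$ (so that $(2Mq+1)\tfrac{2k+1}{2q}\equiv\tfrac{2k+1}{2q}\pmod 1$) is a correct and useful piece of bookkeeping the paper omits, and the only small point you in turn gloss over — that $0\notin\GS$ forces the ratio $|R(0)|^p/\SQS(|R|^p)$ to appear in the error terms — is harmless, since that ratio is fixed once $R$ and $q$ are chosen and is killed as $\e\to0$.
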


\begin{proof}
Similarly to the above, it suffices to derive the concentration
phenomenon for the symmetrized of an  interval $J:=[\frac aq ,
\frac {(a+1)}{q} ]$ for $q$ a sufficiently large number, $2a+1$
coprime to $2q$.

In this setup for any $c<c_p^{\star}(q)$ Lemma \ref{l:homominvqq}
leads to the inequality
\begin{equation}\label{conc_half}
\left|R\left(\frac {2a+1}{2q}\right) \right|^p > c
\sum_{k=0}^{q-1} \left|R\left(\frac {2k+1}{2q}\right) \right|^p.
\end{equation}
with an appropriate $R\in\PP$.

At this point, the proof is exactly the same as the one of the
previous proposition, considering intervals $I_k$ centered at
$(2k+1)/(2q)$ with radius $\delta/q$, with $\delta$ small enough
so that $R$ is nearly constant on $I_k$, and then considering
$S(x):=R(x)\cdot T(qx)$ again, where $T$ is now a gap-peaking
idempotent at $1/2$, with gaps sufficiently large, so that $S$ is
still an idempotent. Using the fact that outside $I_k$ but within
$(k/q, (k+1)/q)$, the integral of $T$ is arbitrarily small in view
of the peaking property at $1/2$, we obtain the assertion as
before. The only difference is the fact that $0$ is no more in the
grid, so that the quotient of $R(0)^p$ with $\sum_{k=0}^{q-1}
\left|R\left( (2k+1)/(2q)\right) \right|^p$ appears in the rests,
but does not change the limit since it remains fixed while $\e$
tends to $0$.

The $p$-concentration with gap at the same level of concentration
is obtained also in a similar way.
\end{proof}

\section{ $p$-concentration by means of peaking at $1/2$}\label{sec:pcondiconc}

We now prove the  part of Theorem \ref{th:concentration}
concerning $p$ not an even integer, which we state separately for
the reader's convenience. The  following proof contains also the
one of Proposition \ref{th:conditional}, which we gave in the
introduction as a hint for the methods.

\begin{proposition}\label{prop:pcondiconc} Let $p>0$ be a given value
for which there is full $p$-concentration with gap at $1/2$. Then
for each nonempty symmetric open set $E\subset \TT$ and each
constant $c<1$ we can find an idempotent $S\in\PP$ with the
property that
\begin{equation}\label{eq:pnormconcentration}
 \int_E |S|^p  > c\int_\TT |S|^p.
\end{equation}
 Moreover, $S$ may be chosen with arbitrarily large gaps.
\end{proposition}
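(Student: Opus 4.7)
The plan is to reduce the problem to a discrete concentration estimate on the grid $\GS=\frac{1}{2q}+\ZZ/q\ZZ$ via Proposition~\ref{half}, and then to solve the discrete problem by an explicit product of Dirichlet kernels tailored to $\GS$. Since Proposition~\ref{half} gives $c_p\geq 2c_p^{\star}$, it suffices to establish $c_p^{\star}=1/2$. The upper bound $c_p^\star\leq 1/2$ is automatic: by evenness of $|R|$, the points $1/(2q)$ and $-1/(2q)=(2q-1)/(2q)$ contribute equally to the denominator of~\eqref{eq:cpqq}, so no single grid point can carry more than half the total mass. Combined with the hypothesis of full $p$-concentration with gap at $1/2$, the equality $c_p^{\star}=1/2$ yields $c_p\geq 1$, and the gap version of Proposition~\ref{half} produces the desired large-gap concentrating idempotent~$S$.

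To produce an idempotent $R$ with discrete concentration ratio on $\GS$ approaching $1/2$, I would take
$$R(x):=\prod_{j=1}^{n}D_{q}(s_j x),\qquad s_j := 1+2qN_j,$$
with $q$ odd and $N_1<N_2<\cdots<N_n$ a super-exponentially growing integer sequence so that (i) the $q^n$ possible sums $\sum_j\nu_j s_j$ with $0\leq\nu_j\leq q-1$ are pairwise distinct (a Sidon-type lacunarity such as $s_j\geq q(s_1+\cdots+s_{j-1})$, guaranteeing $R\in\PP$ and making the minimum gap of the Fourier support of $R$ equal to $s_1=1+2qN_1$, which we can choose as large as we wish), and (ii) $s_j\equiv 1\pmod{2q}$ for each~$j$. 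The congruence makes each factor $D_q(s_j x)$ collapse at every grid point $x=(2k+1)/(2q)\in\GS$ to the constant $D_q\bigl((2k+1)/(2q)\bigr)$, so on $\GS$ the whole product coincides with $D_q^{\,n}$. For odd $q$,
$$\bigl|D_q\bigl((2k+1)/(2q)\bigr)\bigr|=1/\bigl|\sin(\pi(2k+1)/(2q))\bigr|$$
is maximized precisely at the symmetric pair $k\in\{0,q-1\}$, with the runner-up smaller by the strict factor $\sin(\pi/(2q))/\sin(3\pi/(2q))<1$. Consequently the discrete ratio $|R(1/(2q))|^p/\sum_k|R((2k+1)/(2q))|^p$ tends to $1/2$ as $n\to\infty$, so $c_p^{\star}(q)\to 1/2$ along a sequence of odd~$q$.

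The remainder of the proof is a faithful repetition of the proof of Proposition~\ref{half}: for the given symmetric open set $E$ and $c<1$, one selects a small interval $J$ around some $(2a+1)/(2q)\in E$ with $(2a+1,2q)=1$ and $J\cup(-J)\subset E$, uses Lemma~\ref{l:homominvqq} to reduce concentration at $(2a+1)/(2q)$ to concentration at $1/(2q)$, chooses a gap-peaking idempotent $T$ at $1/2$ (whose existence is exactly our hypothesis) with gaps exceeding $\deg(R)/q$ so that $S(x):=R(x)T(qx)$ still lies in $\PP$, and then transfers the $1/2$-concentration on $\GS$ into the required $\approx 1$-concentration on $E$, the factor $2$ gained from $J\cup(-J)$ absorbing the factor $1/2$ lost in the discrete stage. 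Arbitrarily large gaps for $S$ are obtained by taking the gaps of $T$ correspondingly large, together with $s_1$ large. The main obstacle is the precise identification of the maximum of $|D_q|$ on $\GS$ and the strict spectral gap separating it from all other grid values: both points hinge on $q$ being odd, so that $\sin((2k+1)\pi/2)=\pm 1$ for every $k$, and they are what allows the high $n$-th power to annihilate off-peak contributions in the limit.
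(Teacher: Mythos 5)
Your proposal is correct and follows essentially the same route as the paper: reduction to the grid $\GS$ via Proposition~\ref{half}, followed by a product of Dirichlet kernels modulated by frequencies $\equiv 1 \pmod{2q}$ so that on the grid the product collapses to a high power of a single Dirichlet kernel whose off-peak values become negligible --- the paper takes $D_r$ with $r/q$ tending to a fixed $t$ (optimally $t=1/4$) and lets $q\to\infty$ for each fixed number $L$ of factors before sending $L\to\infty$, whereas you fix $q$, take $D_q$ itself and let the number of factors grow, an immaterial variation for open sets. One cosmetic slip: the identity $|\sin(q(2k+1)\pi/(2q))|=1$ holds because $2k+1$ is odd, not because $q$ is, so nothing in your construction actually requires $q$ to be odd.
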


\begin{proof} By Proposition \ref{half},
it is sufficient to prove that $c_p^{\star}=1/2$, that is,

\begin{equation}\label{eq:limsup}
\liminf_{q\rightarrow \infty}\sup_{R\in\PP} \frac
{\left|R\left(\frac 1{2q}\right) \right|^p }{ \sum_{k=0}^{q-1}
\left|R\left(\frac {2k+1}{2q}\right) \right|^p}=\frac 12.
\end{equation}
We will restrict to a sub-family of polynomials in $\PP$, obtained
by products of Dirichlet kernels. Observe first that for $r<q$,
the product
$$
D_r(x)\prod_{l=1}^{L-1} D_r\left(((2q)^l+1)x\right)
$$
is also an idempotent polynomial, the modulus of which coincides
with the $L$-th power of $|D_r|$ on the grid under consideration.
So we are to prove also the last inequality in
\begin{equation}\label{eq:liminf}
\frac{1}{c_p^\star} = \limsup_{q\to\infty} \frac{1}{2c_p^\star(q)}
\leq \inf_L\limsup_{q\rightarrow \infty}\min_{r<q} ~~ \frac12
\cdot \frac { \sum_{k=0}^{q-1} \left|D_r\left(\frac
{2k+1}{2q}\right) \right|^{Lp}}{\left|D_r\left(\frac 1{2q}\right)
\right|^{Lp} }\leq 1.
\end{equation}
Let us define
\begin{equation}\label{A-half}
A(\lambda, r,q):=\left|\frac{\sin\left(\frac{\pi}{2q}\right)}
{\sin\left(\frac{r\pi}{2q}\right)}\right|^{\lambda} \sum_{k\in
\NN\,; k<q/2}\left|\frac{\sin\left(\frac{(2k+1)r\pi}{2q}\right)}
{\sin\left(\frac{(2k+1)\pi}{2q}\right)}\right|^{\lambda}.
\end{equation}
Substituting the explicit value of $D_r$ and using parity, the
quantity appearing in the left hand side of (\ref{eq:liminf}) can
be written as $A(Lp,r,q)$ for $q$ even. When $q$ is odd, we have
to subtract half of the term obtained for $k=(q-1)/2$, which gives
only a $0$ contribution to the limit below. In any case, we have
the inequality
\begin{equation}\label{eq:maj-A-half}
\left( \frac{1}{2c_p^\star(q)} \leq \right) \frac12  \frac {
\sum_{k=0}^{q-1} \left|D_r\left(\frac {2k+1}{2q}\right)
\right|^{Lp}}{\left|D_r\left(\frac 1{2q}\right) \right|^{Lp} }\leq
A(Lp, r, q).
\end{equation}

We then have the following lemma.
\begin{lemma}\label{l:majA}
For fixed $\lambda>1$, we have the inequality
\begin{equation}\label{eq:majAlimit}
\limsup_{q\rightarrow \infty}\min_{r<q}A( \lambda, r, q)\leq
\inf_{0<t<1/2} A(\lambda, t),
\end{equation}
where
\begin{equation}\label{eq:Alimit}
A(\lambda, t):=\frac 1 {\left(\sin(\pi t) \right)^{\lambda}}
\sum_{k=0}^{\infty}\left|\frac{\sin\left((2k+1)\pi t\right)}
{2k+1}\right|^{\lambda}.
\end{equation}
\end{lemma}
\begin{proof}
 Let us  fix  $t\in (0,1/2)$, and consider the
limit of $A(\lambda, 2[qt], q)$ when $q$ tends to $\infty$. It has
the same limit as
$$
\left|\frac{\frac{\pi}{2}} {\sin\left(\pi
t\right)}\right|^{\lambda}
\sum_{k=0}^{(q-1)/2}\left|\frac{\sin\left(\frac{(2k+1)[qt]\pi}{q}\right)}
{q\sin\left(\frac{(2k+1)\pi}{2q}\right)}\right|^{\lambda}.
$$
As $q\sin(\frac{(2k+1)\pi}{2q}) \geq (2k+1) $, Lebesgue's theorem
for series justifies taking the limit termwise. This concludes the
proof of the lemma.
\end{proof}

So in view of Lemma \ref{l:majA} $1/(2 c_p^{\star}(q)) \leq
\inf_L\inf_t A(Lp,t)$. If we take $t=1/4$, all the absolute values
of the occurring sines in $A(\lambda,t)$ are equal, hence cancel
out. It remains
$$
A(\lambda, 1/4)=\sum_k (2k+1)^{-\lambda}=(1-2^{-\lambda})\zeta(\lambda).
$$
Now we can take $L$, or $\lambda=Lp$, arbitrarily large.
Therefore, the infimum in \eqref{eq:liminf} is just $1$.
\end{proof}
\medskip

Note that we found that $1/c_p^\star \leq \inf_L \inf_t A(Lp,t)$
holds always.

Let us conclude this section by a remark that will be used later
on for measurable sets, where we will not be able to consider
large products of Dirichlet kernels for $p\leq 1$, and will have
to restrict to  two factors, that is, take $L=2$. Observe that
each term  $\left|\frac{\sin\left((2k+1)\pi t \right)} {(2k+1)
\sin (\pi t)}\right|$ is below $1$, so that $A(\lambda,t)$ and
$\inf_t A(\lambda,t)$ are strictly decreasing functions of
$\lambda$.

Moreover, $\inf_{0<t<1/2} A(2,t)$ can be computed explicitly. To
compute the summation, we can use Plancherel Formula once we have
recognized the Fourier coefficients (at $k$ and $-k$) of the
function
$$\frac {\pi}2 \left
(\chi_{[-t/2,t/2]}(x)-\chi_{[-t/2,t/2]}(x-1/2)\right).$$ It
follows that
\begin{equation}\label{exact}
A(2,t)=\frac {\pi^2 t}{4\sin^2(\pi t)}.
\end{equation}
Substituting $x=\pi t$ and recalling \eqref{case2} we find
$1/\min_t A(2,t)=2c_2\approx 0.92...$, which is already much
larger than $1/2$, and close to $1$.

\section{Uniform lower bounds for $p$-concentration }\label{sec:unilower}

We now prove the lower estimation in the $p\in 2\NN$ part of
Theorem \ref{th:concentration}. We proceed as in the last section,
using Proposition \ref{discret2cont} instead of Proposition
\ref{half}, since we have now gap-peaking idempotents at $0$.
Similarly to the above, we consider a product of Dirichlet
kernels:
\begin{equation}
\label{prod-idem} R(x):=D_r(x)\prod_{\ell=1}^{L-1}
D_r((q^\ell+1)x).
\end{equation}
We have to consider the quantities \eqref{eq:cpq} and
\eqref{eq:cplim}, i.e. we are to calculate
\begin{equation}\label{liminf}
\frac2{c_p}\leq  \limsup_{q\rightarrow \infty}
\frac1{c_p^{\sharp}(q)} \leq \inf_L\limsup_{q\rightarrow
\infty}\min_{r<q} \frac { \sum_{k=0}^{q-1} \left|D_r\left(\frac
{k}{q}\right) \right|^{Lp}}{\left|D_r\left(\frac 1{q}\right)
\right|^{Lp} }.
\end{equation}
\comment{Remark that we can assume that $r\leq q/2$. Indeed,
$D_r+e_rD_{q-r}=D_q$, which vanishes on the grid except at $0$. So
$|D_r|=|D_{q-r}|$ on the grid except for $0$, and among the two
indices $r$ and $q-r$ it is the smaller one that gives the smaller
quotient.}

As before, in order to estimate the quotient in \eqref{liminf} we
have to consider the equivalent quantity $B(Lp, r,q)$ defined by
\begin{equation}\label{eq:Batzero}
B(\lambda,r,q):= \left|\frac{r\sin\left(\frac{\pi}{q}\right)}
{\sin\left(\frac{r\pi}{q}\right)}\right|^{\lambda} +
2\left|\frac{\sin\left(\frac{\pi}{q}\right)}
{\sin\left(\frac{r\pi}{q}\right)}\right|^{\lambda}
\sum_{k=1}^{q/2}\left|\frac{\sin\left(\frac{kr\pi}{q}\right)}
{\sin\left(\frac{k\pi}{q}\right)}\right|^{\lambda}.
\end{equation}
 We then have the following lemma.
\begin{lemma}\label{l:majB}
For fixed $\lambda>1$, we have the inequality
\begin{equation}\label{eq:majBlimit}
\limsup_{q\rightarrow \infty}\min_{r<q}B( \lambda, r, q)\leq
\inf_{0<t<1/2}B(\lambda, t),
\end{equation} where
\begin{equation}\label{eq:Bdef}
B(\lambda,t):=\left(\frac{\pi t}{\sin \pi
t}\right)^{\lambda}\left(1+2\sum_{k=1}^{\infty}
\left|\frac{\sin\left(k\pi t\right)}{k\pi
t}\right|^{\lambda}\right).
\end{equation}
\end{lemma}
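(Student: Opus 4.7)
The plan is to imitate very closely the proof of Lemma \ref{l:majA}: fix a target $t\in(0,1/2)$, choose a sequence of integers $r=r_q$ with $r_q/q\to t$ (the natural choice is $r_q:=[qt]$), and show that $B(\lambda, r_q, q)\to B(\lambda, t)$ as $q\to\infty$. Since taking the minimum over $r<q$ is bounded above by the particular choice $r=r_q$, this will yield $\limsup_{q\to\infty}\min_{r<q} B(\lambda,r,q)\le B(\lambda,t)$ for every $t\in(0,1/2)$, and then one takes the infimum over $t$.

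The convergence $B(\lambda,r_q,q)\to B(\lambda,t)$ splits into two parts. First, the leading term: as $q\to\infty$ one has $r_q\sin(\pi/q)\sim (r_q/q)\pi\to \pi t$ and $\sin(r_q\pi/q)\to\sin(\pi t)$, giving $\left|\frac{r_q\sin(\pi/q)}{\sin(r_q\pi/q)}\right|^\lambda\to\bigl(\frac{\pi t}{\sin\pi t}\bigr)^{\lambda}$. Second, the sum: rewrite
$$
2\left|\frac{\sin(\pi/q)}{\sin(r_q\pi/q)}\right|^\lambda\!\!\!\sum_{k=1}^{q/2}\left|\frac{\sin(kr_q\pi/q)}{\sin(k\pi/q)}\right|^\lambda = 2\sum_{k=1}^{q/2}\left|\frac{\sin(\pi/q)}{\sin(k\pi/q)}\right|^\lambda\left|\frac{\sin(kr_q\pi/q)}{\sin(r_q\pi/q)}\right|^\lambda.
$$
For each fixed $k$, the first factor converges to $1/k^\lambda$, and the second converges to $|\sin(k\pi t)/\sin(\pi t)|^\lambda$, so the general term tends to $\bigl|\frac{\sin(k\pi t)}{k\sin(\pi t)}\bigr|^\lambda$. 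Summing termwise and factoring $(\pi t)^\lambda$ in numerator and denominator reconstructs exactly $2\bigl(\frac{\pi t}{\sin\pi t}\bigr)^\lambda\sum_{k\ge 1}\bigl|\frac{\sin(k\pi t)}{k\pi t}\bigr|^\lambda$, which combined with the leading term gives $B(\lambda,t)$.

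The only technical point is the interchange of limit and summation, handled by dominated convergence for series exactly as in Lemma \ref{l:majA}. For $1\le k\le q/2$, the inequality $\sin(k\pi/q)\ge \tfrac{2k}{q}$ together with $\sin(\pi/q)\le \pi/q$ gives $\bigl|\frac{\sin(\pi/q)}{\sin(k\pi/q)}\bigr|^\lambda\le (\pi/(2k))^\lambda$, which is summable since $\lambda>1$. On the other factor, $|\sin(kr_q\pi/q)/\sin(r_q\pi/q)|^\lambda\le 1/\sin^\lambda(r_q\pi/q)$, and this denominator is bounded below by, say, $\tfrac12\sin^\lambda(\pi t)>0$ for all $q$ large enough since $r_q/q\to t\in(0,1/2)$. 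Extending the summand to all $k\ge 1$ by zero and dominating it by the $k$-independent $q$-uniform bound $C(t)/k^\lambda$, Lebesgue's theorem for series applies and the termwise limit is justified. This completes the proof; the main (minor) obstacle is simply tracking the uniform domination of the tail in $k$, which the above explicit estimate handles.
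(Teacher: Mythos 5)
Your proof is correct and follows essentially the same route as the paper's: the paper also fixes $t$, lets $q\to\infty$ with $r/q\to t$, and invokes dominated convergence for series exactly as in Lemma \ref{l:majA}, only in sketch form. Your explicit domination $\sin(k\pi/q)\ge 2k/q$ for $k\le q/2$ correctly supplies the uniform bound $C(t)/k^\lambda$ that the paper leaves implicit.
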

\begin{proof}
For  fixed $t\in (0,1/2)$, the left hand side of
\eqref{eq:majBlimit} is bounded by the value that we obtain when
letting $q\to\infty$ with $r/q$ tending to $t$ at the same time.
We conclude as in Lemma \ref{l:majA}.
\end{proof}

Let us define for any fixed value of $\kappa>0$, the quantity
\begin{equation}\label{eq:BpAlambda}
\beta(\kappa):=\limsup_{\lambda\mapsto \infty}B\left (\lambda,
\kappa\sqrt{6 /\lambda }\right),
\end{equation}
which will be useful later on,  since $2/c_p\leq \inf_L \inf_t
B(Lp,t) \leq \beta(\kappa)$. For fixed $s$, the quantity
$\big(\sqrt{\lambda}/s ~\cdot ~ \sin (s/\sqrt
\lambda)\big)^\lambda$ tends to $\exp(-s^2/6)$. We use this for
the computation of $\beta (\kappa)$ and see that the first factor
of \eqref{eq:Bdef} tends to $\exp(\kappa^2\pi^2)$.

Applying the well-known Weierstrass product for $\sin$ we get
$$
\log  \left(\frac{\sin x}{x}\right) =\sum_{n=1}^{\infty}
\log\left( 1-\frac{x^2}{n^2\pi^2}\right)\leq  -
\left(\sum_{n=1}^{\infty}\frac 1{n^2}\right)\frac
{x^{2}}{\pi^{2}}=- \frac{x^2}{6}.
$$
For the $\log$ function here we must restrict to $0<x<\pi$: that
provides us the useful inequality
\begin{equation}\label{eq:sinexp}\notag
\frac{\sin x}{x} \leq \exp\left(-\frac{x^2}{6}\right)\qquad
(0<x<\pi),
\end{equation}
what we apply in the second factor of \eqref{eq:Bdef} for the
range $1\leq k < 1/t$. Thus (at the end extending the sum up to
$\infty$) we are led to
\begin{equation}\label{eq:Dsmallk}
\sum_{k<1/t} \left|\frac{\sin\left(k\pi t\right)}{k\pi
t}\right|^{\lambda} \leq \sum_{k<1/t} \exp\left( -\lambda
\frac{k^2\pi^2 t^2}{6} \right)
\leq \sum_{k=1}^\infty e^{-\kappa^2 k^2\pi^2}.
\end{equation}
Using the trivial bound $|\sin u|\leq 1$, the tail sum can be
estimated as
\begin{equation}\label{eq:Dklarge}
\sum_{k\geq 1/t} \left|\frac{\sin\left(k\pi t\right)}{k\pi
t}\right|^{\lambda} \leq (\pi t)^{-\lambda} \left( t^{\lambda} +
\int_{1/t}^{\infty} \frac{du}{u^{\lambda}}\right)=
\pi^{-\lambda}\left(1+\frac{1/t}{\lambda-1}\right),
\end{equation}
which tends to $0$ with $t=\kappa\sqrt{6/\lambda}$ and
$\lambda\to\infty$.

Collecting the above estimates for
$\beta:=\inf_{\kappa>0}\beta(\kappa)$, we are led to
\begin{equation}\label{majB}
\beta \leq \inf_{\kappa>0} e^{\pi^2\kappa^2}
\left\{1+2\sum_{k=1}^{\infty} e^{-\kappa^2 k^2 \pi^2 }\right\} .
\end{equation}
Note that the sum in the last curly brackets is well-known as
Jacobi's theta function. Choosing here $\kappa=0.225$, we can
compute $\beta\leq 4.13273$, which leads to $c_p\geq 2/\beta \geq
0.48394$, surprisingly close to the theoretical upper bound of
$1/2$.

\medskip

The computation  of $\inf_{0<t<1/2}B(\lambda, t)$ can be executed
explicitly for $\lambda=4$. We recognize the Fourier coefficients
of the convolution product $\chi_{[-t/2,t/2]}*\chi_{[-t/2,t/2]}$,
whose $L^2$ norm is equal to $(2t^3/3)^{1/2}$. Then we use the
Plancherel Formula and obtain that
\begin{equation}\label{p=4}
c_4\geq \max_{0<t<1/2} \frac{3 \left(\sin ^4 (\pi t)
\right)}{\pi^4 t^3 } > 0.495,
\end{equation}
the concrete numerical value having been obtained for the choice
of $t = 0.267$.

\medskip

Comparing the results of the last two sections, it should become
clear why gap-peaking at $1/2$ is even more useful for us, than
gap-peaking at $0$. Indeed, once we can apply gap-peaking at 1/2,
we are able to consider $\GS$ in place of $\GG$: and that means
that instead of the \emph{second largest term} $|D_r(1/q)|$, we
can consider \emph{the very largest term} $|D_r(1/2q)|$ in
comparison to the whole grid sum. Thus in the translated grid case
we can take advantage of considering arbitrarily large powers $L$,
eventually killing all other terms compared to our
$|D_r(1/2q)|^L$, while in the original grid $\GG$ this is subject
to a fine balance, restricted by the necessity of keeping control
of the dominance of the very largest term $D_r(0)^L$.

\bigskip

\begin{centerline} {\bf Part III : Concentration for measurable
sets}
\end{centerline}

\bigskip

We will go back to all steps of the previous proofs in order to
partly generalize the results to measurable sets. We start by
using the theorem of Khintchine on diophantine approximation, see
\cite{Khi}. We prove that a symmetric measurable set of positive
measure contains large parts of intervals which are centered at a
point of one of the two grids, $\GG$ or $\GS$. This is done in
Section \ref{sec:Diophantine}. Then in Section \ref{s:peak-meas}
we prove the gap-peaking property at $0$ or $1/2$ in the even
stronger form that some measurable set of measure $2\eta \delta$
can be deleted from the interval $[-\delta, +\delta]$. In Section
\ref{sec:Bernstein} we prove that values of an idempotent,
concentrating on the grid, does not take too different values on
the intervals of length $2\delta$. Here we may consider additional
assumptions on the degree of the polynomials. Based on the results
of these sections, we will prove $p$-concentration for measurable
sets when $p>1/2$, with some estimates on constants. We conclude
the proof of Theorem \ref{th:concentration-meas} finally in \S
\ref{sec:measconclusion}.

\section{The use of Diophantine Approximation}\label{sec:Diophantine}

We will state two propositions, used respectively on $\GG$ and
$\GS$. The first one is a direct corollary of Khintchine's
Theorem, while the second one is its inhomogeneous extension,
first proved by Sz\"usz \cite{Sz} and later generalized by Schmidt
\cite{Sch}.

\begin{proposition} \label{grid-one}Let $E$ be a measurable set of positive measure in $\TT$. For all $\theta>0$,
$\eta>0$ and $Q\in \mathbb{N}$, there exists an irreducible
fraction $k/q$ such that $q>Q$ and
\begin{equation}\label{khin}
\left|\left [\frac kq-\frac{\theta}{q^2},\frac
kq+\frac{\theta}{q^2}\right]\cap E\right|\geq
(1-\eta)\frac{2\theta} {q^2}.
\end{equation}
 Moreover, given a positive integer $\nu$, it is possible to
choose $q$ such that $(\nu, q)=1$.
\end{proposition}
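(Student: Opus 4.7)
My plan is to combine Lebesgue's density theorem on $E$ with a Khintchine-style Diophantine approximation result, restricted to denominators coprime to $\nu$. First, I would pick, by the Lebesgue density theorem, an irrational density point $x_0\in E$; such a point exists because almost every point of $E$ is a density point of $E$ and the rationals have measure zero. For any prescribed $\zeta>0$ there is then $r_0=r_0(x_0,\zeta)>0$ with
\[
\Abs{[x_0-r,x_0+r]\cap E^c}\leq 2\zeta r\qquad(0<r<r_0).
\]

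Next, I would invoke a form of Khintchine's theorem that supplies infinitely many reduced fractions $k/q$ with $(q,\nu)=1$ and $\Abs{x_0-k/q}<1/q^2$. This is available because the set $S_\nu:=\{q\in\NN:(q,\nu)=1\}$ has positive lower density $\varphi(\nu)/\nu$, so that $\sum_{q\in S_\nu}1/q=\infty$, and the exceptional null set in Khintchine's theorem may be absorbed by requiring $x_0$ to lie outside it. From this infinite family of approximations I select one with $q>Q$ large enough that also $(\theta+1)/q^2<r_0$.

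For the chosen fraction $k/q$, the interval $I_q:=[k/q-\theta/q^2,\,k/q+\theta/q^2]$ of length $2\theta/q^2$ is contained in $J_q:=[x_0-(\theta+1)/q^2,\,x_0+(\theta+1)/q^2]$, since $\Abs{x_0-k/q}<1/q^2$. The density estimate then yields
\[
\Abs{I_q\cap E^c}\leq\Abs{J_q\cap E^c}\leq 2\zeta(\theta+1)/q^2,
\]
so $\Abs{I_q\cap E}\geq (2\theta-2\zeta(\theta+1))/q^2$, and choosing $\zeta\leq\eta\theta/(\theta+1)$ at the very start delivers \eref{khin}.

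The principal obstacle is the coprimality condition $(q,\nu)=1$: the sequence of convergents $p_n/q_n$ of the continued fraction of $x_0$ does not suffice in general, because one can construct $x_0$ whose denominators $q_n$ are all divisible by some prime factor of $\nu$. The remedy is either to appeal to a restricted-denominator Khintchine theorem of Duffin--Schaeffer type (which applies here because the weight $1/q$ on $S_\nu$ still yields a divergent series), or to use non-convergent approximations such as sufficiently many semiconvergents, or to apply Dirichlet's theorem to a translate of $x_0$ that forces the denominator into a prescribed residue class modulo $\nu$.
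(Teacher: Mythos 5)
Your treatment of the main assertion is correct and is essentially the paper's own argument: choose an irrational density point $x_0$ of $E$, approximate it by reduced fractions to within $O(1/q^2)$ via a metrical Diophantine theorem, and let the density estimate on a slightly larger interval centered at $x_0$ absorb both the off-centering of $I_q$ and the loss factor $\eta$; your bookkeeping with $\zeta\leq \eta\theta/(\theta+1)$ is fine, and unboundedness of the reduced denominators gives $q>Q$. Where you stop short is the coprimality clause $(\nu,q)=1$, which you correctly identify as the real difficulty but then leave as a menu of three candidate remedies rather than a completed step. The paper resolves it by citing Schmidt's metrical theorem: for every integer polynomial $P$ and almost every $\xi$, the inequality $\|P(r)\xi-\alpha\|\leq\theta/r$ has infinitely many solutions $r$; taking $P(r)=\nu r+1$ produces denominators $\equiv 1 \pmod{\nu}$, and cancelling common factors only improves the approximation while the reduced denominator, dividing $\nu r+1$, stays coprime to $\nu$. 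This is in effect your first remedy made precise, but note that the divergence condition you quote is not the right one for a Duffin--Schaeffer-type statement (the relevant series is $\sum_q\varphi(q)\psi(q)/q$ with $\psi(q)=\theta/q$ supported on $q$ coprime to $\nu$ --- it does diverge, already over primes not dividing $\nu$, but that is what must be checked), and that the full Duffin--Schaeffer theorem is a far heavier tool than needed. Your third remedy (forcing a residue class on the denominator via Dirichlet's theorem for the fixed point $x_0$) is not straightforward as stated and would itself require a genuine argument, so as written the coprimality part of the proposition remains unproved.
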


\begin{proposition} \label{grid-half}Let $E$ be a measurable set of positive
measure in $\TT$. For all $\theta>0$, $\eta>0$ and $Q\in
\mathbb{N}$, there exists an irreducible fraction $(2k+1)/(2q)$
such that $q>Q$ and
\begin{equation}\label{szusz}
\left|\left [\frac {2k+1}{2q}-\frac{\theta}{q^2},\frac {2k+1}{2q}
+\frac{\theta}{q^2}\right]\cap E\right|\geq (1-\eta)\frac{2\theta}
{q^2}.
\end{equation}
Moreover, given a positive integer $\nu$, it is possible to choose
$q$ such that $(\nu, q)=1$.
\end{proposition}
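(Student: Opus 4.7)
The plan is to invoke the inhomogeneous Khintchine--Sz\"usz theorem in the form suitable for approximation by fractions with odd numerator and denominator $2q$, then localise at a Lebesgue density point of $E$.

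\emph{Inhomogeneous approximation.} Sz\"usz's theorem (\cite{Sz}; see also Schmidt \cite{Sch} for the quantitative form with coprime pairs) asserts that for any monotone $\psi:\NN\to\RR_+$ with $\sum_q\psi(q)=\infty$ and any fixed $\beta\in\TT$, almost every $\alpha\in\TT$ admits infinitely many coprime pairs $(p,q)$ with $|q\alpha-p-\beta|<\psi(q)$. I would apply this with $\beta=1/2$ and $\psi(q)=\theta'/q$ for each $\theta'=1/n$, $n\in\NN$. Setting $m:=2p+1$ (automatically odd) and rearranging gives
\[
\Bigl|\alpha-\frac{m}{2q}\Bigr|<\frac{\theta'}{q^{2}}.
\]
Intersecting the resulting countably many full-measure subsets of $\TT$ produces a single set $\Omega$ of full measure on which, for \emph{every} $\theta'>0$, infinitely many pairs $(m,q)$ with $m$ odd satisfy the inequality above.

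\emph{Handling the extra arithmetic constraints.} The coprimality $(p,q)=1$ is not identical to the required irreducibility $(m,2q)=1$, which reduces to $(m,q)=1$ since $m$ is odd. The additional condition $(\nu,q)=1$ is a further arithmetic constraint. I would enforce both by restricting the approximation to prime denominators $q>\max(Q,\nu)$: for such $q$ one automatically has $(\nu,q)=1$, while $(m,q)=1$ can fail only when $q\mid m$, forcing $m/(2q)\in\{0,\tfrac12\}\pmod 1$. Excluding $\alpha\in\{0,\tfrac12\}$ (a zero-measure set, hence compatible with restriction to a full-measure subset of $E$) leaves only finitely many such sporadic bad approximations, so the discarded pairs are harmless. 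The restricted series $\sum_{q \text{ prime}, \, q>\nu}\psi(q)$ still diverges by Mertens' theorem, which keeps the Sz\"usz hypothesis intact via a monotone minorant.

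\emph{Localisation and interval estimate.} Since $E$ has positive measure, by the Lebesgue density theorem almost every point of $E$ has density $1$ in $E$; I pick such a point $x_0\in E\cap\Omega$ with $x_0\neq 0,\tfrac12$. Given $\eta>0$, fix $r_0>0$ so small that $|B(x_0,r)\setminus E|\leq (\eta/2)\cdot 2r$ for all $0<r\leq r_0$. Using the previous two steps with $\theta':=\theta$, pick a prime $q>\max\{Q,\nu,\sqrt{2\theta/r_0}\}$ and an odd $m$ coprime to $q$ with $|x_0-m/(2q)|<\theta/q^{2}$. The symmetric interval $I:=[m/(2q)-\theta/q^{2},\,m/(2q)+\theta/q^{2}]$ is then contained in $B(x_0,\,2\theta/q^{2})\subset B(x_0,r_0)$, and
\[
|I\cap E|\geq |I|-\bigl|B(x_0,2\theta/q^{2})\setminus E\bigr|\geq \frac{2\theta}{q^{2}}-\frac{\eta}{2}\cdot\frac{4\theta}{q^{2}}=(1-\eta)\frac{2\theta}{q^{2}},
\]
which is the desired conclusion upon writing $m=2k+1$.

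\emph{Main obstacle.} The delicate point is balancing three constraints on the denominator simultaneously: largeness $q>Q$, irreducibility of $(2k+1)/(2q)$, and coprimality $(\nu,q)=1$. Restricting the approximating denominators to primes exceeding $\max(Q,\nu)$ is the cleanest resolution, but it requires that the Sz\"usz--Schmidt statement survive such a restriction; this is the genuinely Diophantine ingredient hidden in the appeal to \cite{Sz,Sch}, everything else being density-point calculus.
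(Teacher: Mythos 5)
Your localisation step (density point plus triangle inequality) and the basic reduction $\|q\alpha-1/2\|<\theta/q$ $\Rightarrow$ $|\alpha-(2k+1)/(2q)|<\theta/q^2$ are exactly what the paper does. The genuine gap is in your treatment of the arithmetic constraints. You restrict the approximating denominators to primes $q>\max(Q,\nu)$ and justify this by saying that $\sum_{q\ \mathrm{prime}}\theta'/q$ still diverges, so ``the Sz\"usz hypothesis stays intact via a monotone minorant.'' This does not work: a monotone minorant of a function that vanishes at every non-prime is eventually identically zero, so there is no such minorant to feed into Sz\"usz's theorem. More fundamentally, Khintchine--Sz\"usz-type metric theorems do \emph{not} survive restriction of the denominators to an arbitrary subsequence merely because the series restricted to that subsequence diverges --- this is precisely the Duffin--Schaeffer-type difficulty, and the case of prime denominators is a separate, genuinely harder theorem (of Harman-type) that is neither cited in the paper nor derivable from \cite{Sz} or \cite{Sch} as you invoke them. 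Since you yourself identify this as ``the genuinely Diophantine ingredient,'' the proof as written is incomplete at its crucial point.

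The paper resolves the two constraints differently and more cheaply. For irreducibility of $(2k+1)/(2q)$ it does not impose coprimality in advance at all: it simply cancels common factors, observing that the error $\theta/q^2$ only improves when measured against the smaller denominator $q'$, and that an irrational $\xi$ forces infinitely many distinct reduced denominators (so $q'>Q$ is available). For $(\nu,q)=1$ it invokes Schmidt's theorem \cite{Sch} on $\|P(r)\xi-\alpha\|\leq\theta/r$ for polynomials $P$ with integer coefficients, applied with $P(r)=\nu r+1$: the denominators produced are of the form $\nu r+1$ and hence automatically coprime to $\nu$, and this coprimality persists under reduction of the fraction. If you want to keep your architecture, you should replace the prime-denominator step by this appeal to Schmidt's polynomial version; otherwise you would need to import a metric theorem on inhomogeneous approximation with prime denominators, which is a substantially stronger tool than the ones cited.
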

\begin{proof}[Proof of Propositions \ref{grid-one} and \ref{grid-half}]
Let $\alpha$ be $0$ or $1/2$. Then according to Sz\"usz' Theorem
\cite{Sz} for $\xi$ belonging to a set of full measure,
\begin{equation}\label{eq:qapproxhalf}
\|q\xi-\alpha\| \leq \frac{\theta}{q }
\end{equation}
has an infinite number of solutions. For $\alpha=1/2$, for
instance, it means that with a certain $k\in\NN$ ($0\leq k <q$) we
have
\begin{equation}\label{eq:qaproxhalfreduced}
|q\xi-1/2-k|<\frac{\theta}{q}, \qquad \textrm{i.e.} \qquad
\left|\xi-\frac{2k+1}{2q}\right|<\frac{\theta}{q^2}.
\end{equation}
We may  assume, and we will do it, that the denominator and
numerator are coprime: if not, we cancel out the common factors,
and the error, compared to the new denominator $q'$, is even
better. Note that for irrational $\xi$ we have infinitely many
different such denominators $q'$: indeed, if not we get a
contradiction with the fact that the error tends to zero with $q$.

Let us choose for $\xi$ an irrational density point of $E$ having
infinitely many solutions of \eqref{eq:qapproxhalf}. This we can
do, since almost every point of $E$ is such. For $\eta$ fixed and
$q$ sufficiently large we then have
$$\left|\TT\setminus E \cap\left[\xi-\frac{2\theta}
{q^2},\xi+\frac{2\theta }{q^2}\right]\right|\leq \frac{2\eta
\theta}{q^2}.$$ So, if $q$ and $k$ are such that
(\ref{eq:qaproxhalfreduced}) holds and if $q$ is large enough,
then (\ref{szusz}) is satisfied by the triangle inequality.

It remains to prove that the denominators $q$ can be taken so that
$(\nu, q)=1$. Schmidt proves in \cite{Sch} that, for each
polynomial $P$ with integer coefficients and each $\alpha\in \TT$,
for almost every $\xi$ one can find an infinite number of integers
$r$ such that
\begin{equation}\label{eq:qapproxnu}
\|P(r)\xi-\alpha\| \leq \frac{\theta}{r }.
\end{equation}
Both for $\alpha=0$ or $1/2$, it suffices to consider $P(r)=\nu r
+ 1$. Schmidt's Theorem then allows \eqref{eq:qapproxnu} for a.e.
$\xi$ by infinitely many $r$. So we can approach $\xi$ for
$\alpha=0$ by fractions $k/(\nu r+1)$, and for $\alpha=1/2$ by
fractions $\frac{2k+1}{2(\nu r+1)}$, eventually simplified. So the
denominator and $\nu$ will always remain coprime. The rest of the
proof is identical.
\end{proof}

\section{Peaking idempotents at $0$ and $1/2$}\label{s:peak-meas}
We will prove the following, which is a more accurate statement
than those of Section \ref{sec:onepeak}.
\begin{proposition}\label{p:peak-meas}
Let $p> 2$. For $\varepsilon>0$ there exists  $\delta_0>0$ and
$\eta>0$ such that, for all $\delta<\delta_0$ and $N\in
\mathbb{N}$,  if $E$ is a measurable set  that satisfies $|E\cap
[-\delta, \delta]|>2(1-\eta)\delta$, then there exists an
idempotent $T$ with gaps larger than $N$ such that
$$\int_{E\cap
[-\delta, \delta]} |T|^p>(1-\varepsilon)\int_0^1 |T|^p.$$ Let
$p>0$ not an even integer. Then for $\varepsilon>0$ there exists
$\delta_0>0$ and $\eta>0$ such that, for all $\delta<\delta_0$ and
$N\in \mathbb{N}$,  if $E$ is a measurable set  that satisfies
$|E\cap [\frac 12-\delta, \frac 12+\delta]|>2(1-\eta)\delta$, then
there exists an idempotent $T$ with gaps larger than $N$ such that

$$\int_{E\cap
[\frac 12-\delta, \frac 12+\delta]} |T|^p>(1-\varepsilon)\int_0^1
|T|^p.$$
\end{proposition}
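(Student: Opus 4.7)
My plan is to revisit the Riesz product construction of Propositions \ref{th:peakingatzero} and \ref{prop:peakinghalf} and to strengthen its concentration estimate so that $|T|^p$ is controlled not merely outside the target interval but on any measurable subset of it of small relative measure. Let $a\in\{0,1/2\}$ denote the peaking point; I would choose the bivariate idempotent exactly as there: the polynomial $f(x,y)=1+e(y)+e(x+2y)$ of Proposition \ref{prop:peakpol2} when $a=0$ with $p>2$ and also when $a=1/2$ with $0<p<2$; and the Mockenhaupt--Schlag polynomial of Proposition \ref{prop:smooth-half} when $a=1/2$ with $p>2$ not an even integer. In every case the marginal $p$-integral $F$ has a strict maximum at $a$ and, by Lemma \ref{smooth} or the analogue inside Proposition \ref{prop:smooth-half}, is of class $\mathcal{C}^2$ near $a$ with $F''(a)<0$.

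The first step is a standard Laplace-type analysis of $F^J$. The $\mathcal{C}^2$ regularity with $F''(a)<0$ gives, as $J\to\infty$, the asymptotic $\int_\TT F^J\sim c_0 F(a)^J/\sqrt{J}$, and the tail ratio
$$\frac{\int_{|x-a|>\delta}F^J}{\int_\TT F^J}$$
depends (to leading order) only on $u:=\delta\sqrt{J}$ and tends to $0$ as $u\to\infty$; indeed, after the change of variable $y=(x-a)\sqrt{J}$, both numerator and denominator become $F(a)^J/\sqrt{J}$ times Gaussian integrals in $y$. Hence fixing $u=u(\varepsilon)$ of order $\sqrt{\log(1/\varepsilon)}$ and $J:=\lceil u^2/\delta^2\rceil$ would yield $\int_{a-\delta}^{a+\delta}F^J>(1-\varepsilon/4)\int_\TT F^J$, with $\delta\sqrt{J}$ bounded by a quantity depending only on $\varepsilon$.

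The new estimate, compared with the proofs already in the paper, concerns the bad set $B:=[a-\delta,a+\delta]\setminus E$, of measure at most $2\eta\delta$. Using only the trivial pointwise bound $F\leq F(a)$ and the above lower bound on $\int_\TT F^J$,
$$\int_B F^J \;\leq\; 2\eta\delta\, F(a)^J \;\leq\; \frac{2\eta\,\delta\sqrt{J}}{c_0}\,\int_\TT F^J.$$
Since $\delta\sqrt{J}$ depends only on $\varepsilon$, I could fix $\eta=\eta(\varepsilon)$ (and $\delta_0(\varepsilon)$ to absorb implicit constants) so that $\int_B F^J<(\varepsilon/4)\int_\TT F^J$ uniformly for all $\delta<\delta_0$ and all admissible $B$. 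This is the heart of the argument, and it is precisely why a plain $L^\infty$ estimate on the eventual idempotent $T$ would not suffice: $\|T\|_\infty^p/\int_\TT|T|^p$ grows exponentially in $J$, whereas $F(a)^J/\int_\TT F^J$ grows only like $\sqrt{J}$---exactly the slack needed to pay for the concentration constraint $J\sim 1/\delta^2$.

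Finally, I would set $T(x):=\prod_{j=1}^{J}f(x,R^j x)$ as in the proof of Proposition \ref{prop:bividpeaking}; for $R$ large enough in terms of $J$ and $N$, this is an idempotent with gaps exceeding $N$. Lemma \ref{l:Rieszconv} extends from continuous to arbitrary measurable integration domains by a standard $L^1$-approximation of $\chi_A$ by a continuous function, taken with tolerance relative to the uniform bound $\|f\|_\infty^{pJ}$ (which is fixed once $J$ is fixed); applied to each $A\in\{[a-\delta,a+\delta],\,B,\,\TT\}$, this yields $\int_A|T|^p\to\int_A F^J$ as $R\to\infty$. Taking $R$ so large that each such approximation error is below $(\varepsilon/8)\int_\TT F^J$ and combining the three estimates gives $\int_{E\cap[a-\delta,a+\delta]}|T|^p>(1-\varepsilon)\int_\TT|T|^p$, as required in all cases.
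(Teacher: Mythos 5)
Your proposal is correct and follows essentially the same route as the paper. The paper isolates precisely your estimate as Lemma~\ref{peak-measstrong}: after normalizing $F(a)=1$, it uses two-sided Gaussian bounds $\exp(-Ax^2)<F(x)<\exp(-ax^2)$ (valid since $F''(a)<0$) to get $\int_{[0,\delta]\setminus E}F^L<\eta\delta$ and $\int F^L\gtrsim\sqrt{\pi/(LA)}$, then chooses $L\asymp\varepsilon'^{-2}\delta^{-2}$ and $\eta=\varepsilon'^2/2$ — exactly your balance $\eta\cdot\delta\sqrt{J}\lesssim\varepsilon$ with $\delta\sqrt{J}$ controlled by $\varepsilon$. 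Your Laplace-asymptotic framing is a cosmetic repackaging of the same inequalities, and your remark that the sub-exponential growth of $F(a)^J/\int F^J$ (in contrast to the exponential growth of $\|T\|_\infty^{p}/\int|T|^p$) is what makes the argument work is exactly the heart of the paper's proof. One small inaccuracy: you say Lemma~\ref{l:Rieszconv} ``extends'' to measurable sets by $L^1$-approximation, but it is already stated (and proved, via Riemann--Lebesgue applied to $\widehat{\chi_E}$) for arbitrary measurable $E$, so no extension step is needed.
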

\begin{proof}
We will proceed as in Section \ref{sec:onepeak}. The main point
is, for our peaking bivariate functions $f$, to find an
appropriate power $L$ of the marginal function $F$ for which the
same kind of estimate is valid: we will then take a Riesz product
with $L$ factors. The proposition will be a consequence of the
following lemma, with $F$ the associated marginal function.

\begin{lemma}\label{peak-measstrong} Let $F:[0,1/2]\rightarrow
[0,\infty)$ be a nonnegative, continuous function, having a strict
global maximum at $0$. Moreover, assume that there exist $0<a<A$
and $\Delta>0$ with $F$ admitting the estimates
\begin{equation}\label{eq:exptwosides}
F(0) \exp(-Ax^2) < F(x) < F(0) \exp(-ax^2) \qquad (x\in
[0,\Delta]).
\end{equation}

Then for all $\e>0$ there exists an $\eta>0$ so that for any
$0<\delta<\Delta$ there is an $L=L(\e,\de)\in \NN$ with the
property that whenever $E\subset [0,1/2)$ is a measurable set
satisfying $|E\cap[0,\delta]|>(1-\eta)\de$, then we have the
inequality
$$
\int_{E\cap [0, \delta]} F^L >(1-\varepsilon)\int_0^{1/2} F^L.
$$
\end{lemma}
\begin{remark} Observe that \eqref{eq:exptwosides} certainly holds
true in case $F$ has a nonvanishing second derivative (from the
right) at 0. Also note the validity of the obvious modification
for even functions on $[-1/2,1/2]$ assuming the analogous
two-sided conditions.
\end{remark}
\begin{proof} We can assume $F(0)=1$. By condition, $
\max_{[\Delta,1/2]} F  <1$, hence -- perhaps with a different
value of $a$, which still depends only on $F$ -- we have
$F(x)<\exp(-ax^2)$ on the whole of $[0,1/2]$. Extending $F$ to the
halfline $[0,\infty)$ as 0 outside $[0,1/2]$, we thus still have
this estimate.

Let now $H:=[0,\infty)\setminus ([0,\de]\cap E)$. Then we have
\begin{equation}\label{eq:Hintegral}
\int_H F^L < \int_{[0,\de]\setminus E} 1 + \int_\de^\infty
e^{-Lax^2}dx < |[0,\delta]\setminus E| + \int_\de^\infty
\frac{dx}{Lax^2} < \eta  \delta + \frac{1}{\de a L}.
\end{equation}
On the other hand with a very similar calculation we obtain
\begin{align}\label{eq:intonE}
\int_{[0,\de]\cap E} F^L & > \int_{[0,\de]\cap E} e^{-LAx^2} dx =
\left( \int_0^\infty - \int_{[0,\delta]\setminus E} -
\int_\delta^\infty \right) e^{-LAx^2} dx \notag \\ & \geq \frac12
\sqrt{\frac{\pi}{LA}}-|[0,\delta]\setminus E| - \int_\de^\infty
\frac{dx}{Lax^2}  > \frac12 \sqrt{\frac{\pi}{LA}}- \eta\delta -
\frac{1}{\de a L}.
\end{align}
A combination of \eqref{eq:Hintegral} and \eqref{eq:intonE}
reveals that it suffices to ascertain
$$
\eta \delta < \frac{\e}8 \sqrt{\frac{\pi}{LA}} \qquad \textrm{and}
\qquad \frac{1}{\de a L} < \frac{\e}8 \sqrt{\frac{\pi}{LA}},
$$
that is, with a constant $C=C(a,A)=C_F$,
$$
\eta \delta \sqrt{L} \leq \e' \qquad \textrm{and} \qquad
\frac{1}{\de \sqrt{L}} \leq \e' \qquad \textrm{with} \quad
\e':=\e/C.
$$
Thus we conclude the proof choosing $L:=\lceil \e'^{-2}\de^{-2}
\rceil$ and $\eta=\e'^2/2$.
\end{proof}

To prove both cases of the proposition, note that we can also
translate $F$ so that the maximum point falls to $1/2$ instead of
$0$.

At this point the proof of the proposition is identical to the
proofs of Section \ref{sec:onepeak}, using Lemma
\ref{l:Rieszconv}. For the given $E$, we find an idempotent $T$
such that integrals of $|T|^p$, respectively on $E\cap[-\delta,
+\delta]$ and on the whole torus, satisfy the same inequality as
the corresponding integrals for the function $F^L$.
\end{proof}

\section{Bernstein-type inequalities}\label{sec:Bernstein}

In order to adapt our proof of Proposition \ref{discret2cont}, we
need to control the error done when replacing values of
idempotents in a neighborhood of one of the grids by its values on
the grid.

We introduce the following notation, which will simplify the
proofs. For $f$ a periodic function, we will  use the sums of its
values on the two grids, which we denote by
\begin{equation}\label{def:sigma} \Sig_q(f):=
\sum_{k=0}^{q-1}f\left(\frac kq\right), \qquad \qquad
\Sig_q^\star(f):= \sum_{k=0}^{q-1}f\left(\frac{2k+1}{2q}\right).
\end{equation}

The aim of this paragraph is to recall classical inequalities, and
modify them according to our purposes. Let us prove the following
lemma.
\begin{lemma}\label{l:Bernstein}
For $1< p<\infty$ there exists a constant $C_p$ such that, for
$P\in \mathcal{T}_q$  and for $|t|<1/2$, we have the two
inequalities
\begin{equation}\label{maj-grid}
\sum_{k=0}^{q-1}|P( t+k/q)|^p\leq C_p \sum_{k=0}^{q-1}|P(k/q)|^p,
\end{equation}
\begin{equation}\label{maj-grid1}
\sum_{k=0}^{q-1}\left||P(t+k/q)|^p-|P(k/q)|^p \right| \leq C_p
|qt| \sum_{k=0}^{q-1}|P(k/q )|^p.
\end{equation}
\end{lemma}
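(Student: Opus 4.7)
My plan is to deduce both inequalities from a single uniform Marcinkiewicz--Zygmund (MZ) estimate for $\mathcal{T}_q$: for fixed $1<p<\infty$ there exist constants $c_p, C_p>0$, independent of $q\in\NN$ and of $s\in\TT$, such that for every $P\in\mathcal{T}_q$,
\[
c_p \, q \int_{\TT} |P|^p \;\leq\; \sum_{k=0}^{q-1} |P(s+k/q)|^p \;\leq\; C_p \, q \int_{\TT} |P|^p.
\]
The crucial point is that polynomials in $\mathcal{T}_q$ have one-sided spectrum $\{0,1,\ldots,q-1\}$; after the unimodular change $P(x) = e(\tfrac{q-1}{2}x)\tilde P(x)$, which preserves $|P|$, the polynomial $\tilde P$ has spectrum symmetric about $0$ of two-sided degree $(q-1)/2$. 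For odd $q$ this is an integer degree and $q = 2\cdot(q-1)/2+1$ equispaced nodes are exactly the right number for the classical MZ theorem. For even $q$, $\tilde P$ carries half-integer frequencies and is $1$-antiperiodic; the dilation $\hat P(y) := \tilde P(2y)$ is then a standard $1$-periodic trigonometric polynomial of two-sided degree $q-1$, classical MZ applies to it with $2q$ equispaced nodes, and the $1$-antiperiodicity of $\tilde P$ folds those $2q$ nodes back into the desired $q$-node inequality for $P$ with the same uniform constants. Granted this MZ estimate, \eqref{maj-grid} is immediate, since both $\sum_k |P(t+k/q)|^p$ and $\sum_k |P(k/q)|^p$ are comparable to $q\|P\|_p^p$.

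For \eqref{maj-grid1} I exploit that $|P|^p$ is absolutely continuous on $\TT$ when $p>1$ (it is Lipschitz near the zeros of $P$), with a.e.\ derivative $\frac{d}{dx}|P|^p = p|P|^{p-2}\operatorname{Re}(\overline P P')$ of modulus at most $p|P|^{p-1}|P'|$. The fundamental theorem of calculus then yields, for each $k$,
\[
\bigl||P(k/q+t)|^p - |P(k/q)|^p\bigr| \;\leq\; p \int_0^{|t|} |P(k/q+\sigma)|^{p-1}\,|P'(k/q+\sigma)|\,d\sigma.
\]
Summing in $k$, interchanging sum and integral, and applying H\"older's inequality with conjugate exponents $p/(p-1)$ and $p$, matters reduce to a uniform-in-$\sigma$ bound on
\[
\Bigl(\sum_k |P(k/q+\sigma)|^p\Bigr)^{(p-1)/p} \Bigl(\sum_k |P'(k/q+\sigma)|^p\Bigr)^{1/p}.
\]

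Each factor is now controlled by \eqref{maj-grid}: the first directly, and the second after noting that $P'\in\mathcal{T}_q$ (its spectrum is contained in that of $P$). Using the MZ equivalence to convert $\sum_k |P'(k/q)|^p$ into $\asymp q\|P'\|_p^p$, then invoking Bernstein's inequality $\|P'\|_p \leq 2\pi(q-1)\|P\|_p$ together with the lower MZ bound for $P$, one obtains
\[
\sum_k |P'(k/q)|^p \;\leq\; C_p\,(2\pi q)^p\sum_k |P(k/q)|^p.
\]
Plugging back and integrating over $\sigma\in[0,|t|]$ yields \eqref{maj-grid1} with a constant proportional to $p$. The only genuinely non-trivial step is the uniform-in-$q$ MZ inequality of the first paragraph with exactly $q$ sample points for polynomials of two-sided degree $\leq q-1$; the classical theorem would require $2q-1$ nodes, and it is precisely the one-sided spectrum of $\mathcal{T}_q$, exposed by the spectrum-centering trick, that brings the necessary node count down to $q$.
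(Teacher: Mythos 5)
Your proof is correct and follows essentially the same route as the paper's: \eqref{maj-grid} comes from the Marcinkiewicz--Zygmund equivalence applied twice together with translation invariance of the $L^p$ norm, and \eqref{maj-grid1} from combining Bernstein's inequality, H\"older's inequality and Marcinkiewicz--Zygmund again. The only cosmetic difference is that you differentiate $|P|^p$ directly and integrate, whereas the paper first establishes the discrete bound $\sum_k|P(t+k/q)-P(k/q)|^p\leq C_p|qt|^p\sum_k|P(k/q)|^p$ and then converts differences of $p$-th powers via $\left||a|^p-|b|^p\right|\leq p\,|a-b|\,(|a|^{p-1}+|b|^{p-1})$; both reductions terminate in the same H\"older product of grid sums of $|P|^p$ and $|P'|^p$.
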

\begin{proof}
For $1<p<\infty$, the $L^p$ norm of a trigonometric polynomial in
$\mathcal{T}_q$ is equivalent to the $\ell^p$ norm of its values
on the grid $\mathbb{G}_q$. This is known as the
Marcinkiewicz-Zygmund Theorem: the implied constants depend only
on $p$ but tend to $\infty$ for $p$ tending to $1$ or $\infty$.
For the exact form fitting to our Taylor polynomials see Theorem
(7.10), p. 30 chapter X in \cite{Z}; see also \cite{OS} for recent
extensions. Inequality (\ref{maj-grid}) then follows using the
Marcinkiewicz-Zygmund Theorem twice, and invariance by translation
of the $L^p$ norm.

To obtain (\ref{maj-grid1}), we use a variant of Bernstein's
Inequality, which may be stated, for $P\in\mathcal{T}_q$, as
\begin{equation}\label{Bernst}
 \int_0^1|P(x+t)-P(x)|^pdx\leq (2\pi q|t|)^p\int_0^1 |P(x)|^pdx.
\end{equation}
Since this is not the usual form of Bernstein's Inequality, we
indicate how to obtain it. We write, for positive $t$,
$$
|P(x+t)-P(x)|^p\leq t^{p-1}\int_x^{x+t}|P'(u)|^p du,
$$
apply this estimate on the left hand side of \eqref{Bernst} and
then change the order of integration. We then conclude by using
Bernstein's Inequality as stated in Theorem (3.16), chapter X in
\cite{Z}, that is,
$$ \int_0^1|P'(x)|^p dx\leq (2\pi q)^p\int_0^1 |P(x)|^pdx.$$
Let us proceed with the proof of (\ref{maj-grid1}). By using the
Marcinkiewicz-Zygmund Theorem for both sides of \eqref{Bernst}, we
find that, for $1<p<\infty$, there exists some constant $C_p$,
(independent of $P\in\mathcal{T}_q$), such that
\begin{equation}\label{maj-grid0}
\sum_{k=0}^{q-1}|P(t+k/q)-P(k/q)|^p\leq C_p |qt|^p
\sum_{k=0}^{q-1}|P(k/q)|^p.
\end{equation}
Let us use the elementary inequality
\begin{equation}\label{elem}
\left| |a|^p-|b|^p \right| \leq p |a-b|
\left(|a|^{p-1}+|b|^{p-1}\right)
\end{equation}
and the H\"older Inequality together with \eqref{maj-grid0}, as
well as our notation given in \eqref{def:sigma}. We obtain the
estimate
\begin{align*}\sum_{k=0}^{q-1}&\left||P(t+k/q)|^p-|P(k/q)|^p \right|
\leq \\
& \qquad pC_p^{\frac 1p} |qt|\left(\Sig_q(|P|^p)\right)^{\frac1p}
\cdot \left(\Sig_q\left(\left(|P|^{p-1}+|P(t+\cdot)|^{p-1}
\right)^\frac{p}{p-1}\right)\right)^{\frac{p-1}{p}}.
\end{align*}
After having used Minkowski's inequality and the estimate
(\ref{maj-grid}), i.e. $\Sig_q(|P(t+\cdot)|^p)\leq C_p
\Sig_q(|P|^p)$, the last factor on the right hand side becomes
$C_p'\left(\Sig_q(|P|^p)\right)^{\frac{p-1}{p}}$, which concludes
the proof of \eqref{maj-grid1}.
\end{proof}

The following is an easy consequence of Lemma \ref{l:Bernstein}.
\begin{lemma}\label{l:Bernstein-half}For $1< p<\infty$ and with
the same constant $C_p$ as in Lemma \ref{l:Bernstein} we have the
following property. Whenever $P\in \mathcal{T}_{2q}$ satisfies
\begin{equation}\label{complement}
\sum_{k=0}^{q-1}|P(k/q)|^p \leq K \Sig_q^\star(|P|^p),
\end{equation}
then, for any $|t|<1/2$, we have the two inequalities
\begin{equation}\label{maj-gridhalf}
\sum_{k=0}^{2q-1}\left|P\left(\frac k{2q}+t\right)\right|^p\leq
C_p(K+1) \Sig_q^\star(|P|^p),
\end{equation}
\begin{equation}\label{maj-grid1half}
\sum_{k=0}^{2q-1}\left|\left|P\left(\frac
{k}{2q}+t\right)\right|^p-\left|P\left(\frac
{k}{2q}\right)\right|^p \right| \leq 2C_p (K+1) |qt|
\Sig_q^\star(|P|^p).
\end{equation}
\end{lemma}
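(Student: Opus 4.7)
The plan is to reduce the statement directly to Lemma \ref{l:Bernstein} applied in degree $2q$, exploiting the canonical decomposition of the finer grid,
$$
\left\{\tfrac{k}{2q} : 0\leq k\leq 2q-1\right\} = \GG \,\cup\, \GS,
$$
where the even indices $k=2j$ reproduce $\GG$ and the odd indices $k=2j+1$ give $\GS$.

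The first step is therefore to split the $2q$-grid sum of $|P|^p$ as the sum of the $\GG$-contribution plus $\Sig_q^\star(|P|^p)$, and then to use hypothesis \eqref{complement} in order to bound the $\GG$-contribution by $K\,\Sig_q^\star(|P|^p)$. This yields the clean a priori estimate
$$
\sum_{k=0}^{2q-1}\left|P\!\left(\tfrac{k}{2q}\right)\right|^p = \sum_{j=0}^{q-1}|P(j/q)|^p + \Sig_q^\star(|P|^p) \leq (K+1)\,\Sig_q^\star(|P|^p).
$$

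The second step is to apply Lemma \ref{l:Bernstein} with $q$ replaced by $2q$, which is legitimate because $P\in\mathcal{T}_{2q}$. Its two inequalities, read on the $2q$-grid, give respectively the analogues of \eqref{maj-grid} and \eqref{maj-grid1} with constants $C_p$ and $C_p|2qt|$ multiplying the $2q$-grid sum of $|P|^p$. Substituting the bound from the first step then converts these into the claimed inequalities \eqref{maj-gridhalf} and \eqref{maj-grid1half}; the factor $2$ appearing in the second one arises simply from $|2qt|=2|qt|$.

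No substantial obstacle is expected: the lemma is essentially a dictionary-style translation of Lemma \ref{l:Bernstein} from the grid $\GG$ to the grid $\mathbb{G}_{2q}$, and the extra hypothesis \eqref{complement} enters only to absorb the half of the $2q$-grid sum that lies on $\GG$ into a constant multiple of the $\GS$-sum. In particular, the constant $C_p$ of Lemma \ref{l:Bernstein} is preserved, in accordance with the statement.
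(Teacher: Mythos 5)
Your proof is correct, and since the paper gives no proof of this lemma (it merely labels it ``an easy consequence of Lemma \ref{l:Bernstein}''), your argument supplies exactly the intended derivation: split the $2q$-grid into even and odd indices to get $\sum_{k=0}^{2q-1}|P(k/2q)|^p = \Sig_q(|P|^p) + \Sig_q^\star(|P|^p) \leq (K+1)\Sig_q^\star(|P|^p)$ by \eqref{complement}, then invoke Lemma \ref{l:Bernstein} with $2q$ in place of $q$ (legitimate since $P\in\mathcal{T}_{2q}$ and $C_p$ depends only on $p$), and substitute; the factor $2$ in \eqref{maj-grid1half} comes from $|2qt|=2|qt|$ as you say.
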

This lemma explains why we introduce the next definition.
\begin{definition}\label{Condition} Let $0<p<\infty$ and $q\in\NN$.
We say that a polynomial $f$ satisfies the \emph{grid-condition
with  constant $K$}, if we have
\begin{equation}\label{Kcondi}
\sum_{k=0}^{q-1}\left|f\left(\frac{k}{q}\right)\right|^p\leq K
\sum_{k=0}^{q-1}\left|f\left(\frac{2k+1}{2q}\right)\right|^p,
\end{equation}
that is, with the notation \eqref{def:sigma}, $\Sig_q (|f|^p)\leq
K \Sig_q^\star(|f|^p)$.
\end{definition}
\begin{remark}\label{exact-q} When $P\in {\mathcal T}_q$,
\eqref{complement} -- i.e., the grid condition \eqref{Kcondi} for
$P$ -- holds with $K=C_p$ depending only on $p>1$: just use
\eqref{maj-grid} for the translated by $1/2q$ polynomial.
\end{remark}

We will use these considerations for products of such polynomials
as well.
\begin{lemma}\label{error} For $1/2< p<\infty$ there exists a constant
$A_p$ such that, whenever $Q\in {\mathcal T}_{2q}$ satisfies the
grid-condition \eqref{Kcondi} with exponent $2p$, i.e.
\begin{equation}\label{complementQ}
\Sig_q (|Q|^{2p}):=\sum_{k=0}^{q-1}|Q(k/q)|^{2p}\leq K
\Sig_q^\star(|Q|^{2p}),
\end{equation}
then for the product polynomial $R(x):=Q(x)Q((2q+1)x)$ we have for
all $|t|<1/2$ and for all $a:=0, 1, \dots, q-1$ the two
inequalities
\begin{equation}\label{un}
\sum_{k=0}^{q-1}\left|R\left(t+\frac {2k+1}{2q}\right)
\right|^{p}\leq (1+A_p (K+1)q^2|t|)\Sig_q^\star(|Q|^{2p}),
\end{equation}
\begin{equation}\label{deux} \left|R\left(t+\frac
{2a+1}{2q}\right) \right|^{p}\geq \left|R\left(\frac
{2a+1}{2q}\right)
\right|^{p}-A_p(K+1)q^2|t|\Sig_q^\star(|Q|^{2p}).
\end{equation}
\end{lemma}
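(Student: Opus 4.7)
The plan is to derive both (un) and (deux) from a single master estimate
$$
\sum_{k=0}^{q-1} \left||R(t+y_k)|^p - |R(y_k)|^p\right| \leq A_p (K+1)\, q^2 |t|\, \Sig_q^\star(|Q|^{2p}),
$$
where $y_k := (2k+1)/(2q)$. The arithmetic relation $(2q+1)(2k+1) \equiv (2k+1) \pmod{2q}$ together with the $1$-periodicity of $Q$ gives the two identities $R(y_k) = Q(y_k)^2$ and $R(t+y_k) = Q(t+y_k)\cdot Q\bigl((2q+1)t + y_k\bigr)$. Consequently $\sum_k |R(y_k)|^p = \Sig_q^\star(|Q|^{2p})$, so that (un) follows from the master estimate by summing and (deux) by retaining only the single term $k = a$.

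To prove the master estimate, I set $\phi := |Q|^p$ and write $s'$ for the symmetric reduction of $(2q+1)t$ modulo $1$, so that $|s'| \leq (2q+1)|t|$ and hence $q|s'| \leq 3q^2|t|$. The telescoping identity
$$
\phi(t+y_k)\phi(s'+y_k) - \phi(y_k)^2 = \phi(t+y_k)\bigl[\phi(s'+y_k) - \phi(y_k)\bigr] + \phi(y_k)\bigl[\phi(t+y_k) - \phi(y_k)\bigr]
$$
followed by the triangle inequality and Cauchy-Schwarz reduces matters to two kinds of $\ell^2$ estimates, evaluated for $u \in \{t, s'\}$: the \emph{mass} factor $\bigl(\sum_k |Q(u+y_k)|^{2p}\bigr)^{1/2}$ and the \emph{increment} factor $\bigl(\sum_k |\phi(u+y_k) - \phi(y_k)|^2\bigr)^{1/2}$. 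The mass factor is bounded by $\bigl[C_{2p}(K+1)\Sig_q^\star(|Q|^{2p})\bigr]^{1/2}$ upon applying \eqref{maj-gridhalf} of Lemma \ref{l:Bernstein-half} to $Q \in \mathcal{T}_{2q}$ at the exponent $2p > 1$ (valid since $p > 1/2$) under the grid-condition \eqref{complementQ}.

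The increment factor is the delicate one. For $p \geq 1$ I use the pointwise bound $\bigl||a|^p - |b|^p\bigr| \leq p \max(|a|,|b|)^{p-1}|a-b|$, square, then apply Hölder with conjugate exponents $(p, p/(p-1))$; for $1/2 < p \leq 1$ I use instead the concavity bound $\bigl||a|^p - |b|^p\bigr| \leq |a - b|^p$ and square directly. Either reduction leads to the $\ell^{2p}$ increment sum $\sum_k |Q(u+y_k) - Q(y_k)|^{2p}$, which is controlled by combining the variant \eqref{Bernst} of Bernstein's inequality applied to $Q$ at the exponent $2p$ with the Marcinkiewicz-Zygmund equivalence on the full $2q$-grid $\{k/(2q)\}$, collapsed to $\GS$ via \eqref{complementQ}; this produces the bound $\lesssim (K+1)(q|u|)^{2p}\,\Sig_q^\star(|Q|^{2p})$. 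Assembling all four factors, absorbing the $q|s'|$ into $3q^2|t|$, and collecting constants yields the master estimate with a constant $A_p$ depending only on $p$ through $C_{2p}$ and the Marcinkiewicz-Zygmund constant.

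The main obstacle is the increment estimate in the regime $1/2 < p \leq 1$, where $\phi = |Q|^p$ is no longer a trigonometric polynomial and no direct Bernstein inequality is available for it; the $p$-concavity detour through the increments of $Q$ itself is essentially forced. The hypothesis $p > 1/2$ enters precisely at this point, since the exponent $2p$ must strictly exceed $1$ for the Marcinkiewicz-Zygmund theorem to be applicable to the increment $Q(\cdot + u) - Q(\cdot)$. This is the same threshold that ultimately limits Theorem \ref{th:concentration-meas} to $p > 1/2$.
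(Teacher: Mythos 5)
Your architecture (a telescoping/product decomposition, Cauchy--Schwarz over the grid, then the Bernstein--Marcinkiewicz--Zygmund machinery of Lemmas \ref{l:Bernstein} and \ref{l:Bernstein-half}) is the right one, and for $p\geq 1$ your argument goes through and is essentially a rearrangement of the paper's proof. But there is a genuine gap in the range $1/2<p<1$, which is precisely the range the hypothesis $p>1/2$ is designed to reach. Writing $y_k:=(2k+1)/(2q)$, your increment factor is $\bigl(\sum_k\bigl||Q(u+y_k)|^p-|Q(y_k)|^p\bigr|^2\bigr)^{1/2}$, and you control it via $\bigl||a|^p-|b|^p\bigr|\leq|a-b|^p$ followed by Bernstein and Marcinkiewicz--Zygmund at exponent $2p$. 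That yields a bound of order $\bigl[(K+1)\Sig_q^\star(|Q|^{2p})\bigr]^{1/2}(q|u|)^{p}$, so after multiplying by the mass factor and inserting $|u|\leq(2q+1)|t|$ your error term is of order $(q^2|t|)^{p}$, not $q^2|t|$. Since $p<1$, one has $(q^2|t|)^{p}\gg q^2|t|$ exactly when $q^2|t|$ is small --- which is the only regime where \eqref{un} and \eqref{deux} carry information, and the regime in which the lemma is invoked in Proposition \ref{2pto-p} (there $|t|<\theta/q^2$). So your master estimate, hence the lemma as stated, is not established for $1/2<p<1$; the weaker bound with $(q^2|t|)^p$ would still suffice for the application, but it is not what is claimed, and no choice of $A_p$ repairs it.

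The paper's proof sidesteps this by never differencing the non-Lipschitz function $|Q|^p$. It sets $X_k(t):=|Q(t+k/(2q))|^{2p}$ and applies the \emph{linear-in-$t$} increment estimate \eqref{maj-grid1half} at exponent $2p>1$, obtaining $\sum_{k=0}^{2q-1}|X_k(t)-X_k(0)|\leq 2C_{2p}(K+1)q|t|\,\Sig_q^\star(|Q|^{2p})$. Then $|R(t+y_k)|^p=\sqrt{X_{2k+1}(t)\,X_{2k+1}((2q+1)t)}$, and the square root is removed by Cauchy--Schwarz over $k$ for \eqref{un}, and by the elementary inequality $\sqrt{(A-\delta)(A-\delta')}\geq A-\delta-\delta'$ for the single term in \eqref{deux}. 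In short: take increments at exponent $2p$, where the grid estimate is linear in $t$, rather than at exponent $p$. Your $p\geq1$ branch already does the analogous thing implicitly (there $|Q|^p$ itself admits a linear increment bound after H\"older), which is why that case works.
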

\begin{proof}
Let us put, for $k=0, 1, \dots 2q-1$,
\begin{equation}\label{XY}
X_k(t):= \left|Q\left(t+\frac {k}{2q}\right) \right|^{2p}.
\end{equation}
Note that the two factors of $R$ take the same values on the grid
$\GS$. Moreover, since $Q\in{\mathcal T}_{2q}$ and $2p>1$, it
follows from Lemma \ref{l:Bernstein-half}, formula
\eqref{maj-grid1half} that
$$
\sum_{k=0}^{2q-1}|X_{k}(t)-X_{k}(0)|\leq
2C_{2p}(K+1)q|t|\Sig_q^\star(|Q|^{2p}).
$$
Let us pass to
$$
Y_k(t):= \left|R\left(t+\frac {k}{2q}\right)
\right|^{p}=\sqrt{X_k(t)X_k((2q+1)t)}.
$$
Using the Cauchy-Schwarz Inequality and the previous inequality,
we find for all $|t|\leq 1/2$
\begin{align*}\label{sigma_ineq}
\sum_{k=0}^{q-1}Y_{2k+1}(t)&\leq
\left(\sum_{k=0}^{q-1}X_{2k+1}(t)\right)^{\frac
12}\left(\sum_{k=0}^{q-1}X_{2k+1}((2q+1)t)\right)^{\frac 12}\\
&\leq
\left(\Sig_q^\star(|Q|^{2p})+\sum_{k=0}^{2q-1}|X_{k}(t)-X_{k}(0)|\right)^{\frac
12}\\
&\qquad\qquad\times\left(\Sig_q^\star(|Q|^{2p})+
\sum_{k=0}^{2q-1}|X_{k}((2q+1)t)-X_{k}(0)|\right)^{\frac
12}\\
&\leq (1+2C_{2p}(2q+1)q(K+1)|t| )\Sig_q^\star(|Q|^{2p}).
\end{align*}
We have proved (\ref{un}). We can write in the same way that
\begin{align*} Y_{2a+1}(t)^2&\geq
\left(X_{2a+1}(0)-\sum_{k=0}^{2q-1}|X_{k}(t)-X_{k}(0)|\right)\\
&\qquad\qquad \times
\left(X_{2a+1}(0)-\sum_{k=0}^{2q-1}|X_{k}((2q+1)t)-X_{k}(0)|\right),
\end{align*}
so that
$$
Y_{2a+1}(t)\geq
Y_{2a+1}(0)-2C_{2p}(K+1)(2q+1)q|t|\Sig_q^\star(|Q|^{2p}).
$$
\end{proof}

\section{From discrete concentration to concentration for
measurable sets}\label{sec:measconclusion}

\begin{definition}\label{def:gammap-sharp} We define
\begin{equation}\label{gam-sharp}
\gamma_p^\sharp:= \liminf_{q\rightarrow \infty}
\gamma_p^\sharp(q), \qquad \gamma_p^\sharp(q):=\sup_{R\in \PP_q}
\frac{\left|R\left(\frac 1q\right) \right|^p}{ \sum_{k=0}^{q-1}
\left|R\left(\frac kq\right) \right|^p}.
\end{equation}
\end{definition}
Using the notation \eqref{eq:Bdef}, the results of Section
\ref{sec:unilower} give immediately
\begin{equation}\label{eq:gammasharpest}
(\gamma_p^\sharp)^{-1}\leq \inf_{0<t<1/2} B(p,t),
\end{equation}
valid for any $p>1$.

Let us give the corresponding definition for the grid $\GS$.
\begin{definition}\label{def:gammap-star} We define
\begin{equation}\label{gam-star}
\gamma_p^\star:= \liminf_{q\rightarrow \infty} \gamma_p^\star(q),
\qquad \gamma_p^\star(q):=\sup_{R\in \PP_q}
\frac{\left|R\left(\frac 1{2q}\right) \right|^p}{ \sum_{k=0}^{q-1}
\left|R\left(\frac {2k+1}{2q}\right) \right|^p}.
\end{equation}
\end{definition}
We have seen in Section \ref{sec:pcondiconc} that, for $p>1$, with
the notation \eqref{eq:Alimit},
\begin{equation}\label{eq:gammastarest}
(\gamma_p^\star)^{-1}\leq \inf_{0<t<1/2} A(p,t).
\end{equation}

\comment{We have the following proposition, which will be improved
later on by the use of probability methods. We restrict first to
$p$ not an even integer and the use of the grid $\GS$.}

\begin{proposition}\label{2pto-p}
Let $p>1/2$ not an even integer. Then there is $p$-concentration
for measurable sets, and $\gamma_p \geq 2\gamma_{2p}^{\star}$;
furthermore, there is $p$ concentration for measurable sets with
gap at the same level.
\end{proposition}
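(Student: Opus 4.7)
The strategy mirrors Proposition~\ref{half}, transferring $\GS$-concentration to concentration on a generic symmetric measurable set $E$. The difference is that Proposition~\ref{grid-half} supplies a neighborhood of a grid point of width only $\theta/q^2$, not $1/q$. To cope with this I use the minimal product $R(x) := Q(x) Q((2q+1)x)$ of two idempotents; since on $\GS$ one has $|R|^p = |Q|^{2p}$, this is precisely where the exponent $2p$ enters the conclusion. Lemma~\ref{error} controls the Bernstein/Marcinkiewicz--Zygmund error for such products, but only when $2p>1$, which fixes the threshold $p>1/2$.

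\textbf{Construction.} Fix a symmetric measurable $E\subset\TT$ of positive measure, $\e>0$, and $N_0\in\NN$; parameters $\theta,\eta>0$ will be tuned at the end. By Definition~\ref{def:gammap-star} select $q$ large along a suitable subsequence and $Q\in\PP_q$ with $|Q(\tfrac{1}{2q})|^{2p} > (\gamma_{2p}^\star - \e)\SQS(|Q|^{2p})$, and by Proposition~\ref{grid-half} locate an irreducible fraction $\tfrac{2a+1}{2q}$ such that
\[
\bigl|E \cap [\tfrac{2a+1}{2q} - \tfrac{\theta}{q^2},\, \tfrac{2a+1}{2q} + \tfrac{\theta}{q^2}]\bigr| \geq (1-\eta)\tfrac{2\theta}{q^2}.
\]
Invoking Lemma~\ref{l:homominvqq} and Remark~\ref{degree-half}, replace $Q$ by the projection $\mathbf{\Pi}_{2q}(Q(b\,\cdot))$ where $b(2a+1)\equiv 1 \pmod{2q}$; this yields an idempotent in $\PP_{2q}$ concentrating at $\tfrac{2a+1}{2q}$, whose values on $\GS$ are a permutation of those of the original $Q$, and which still satisfies the grid-condition \eqref{Kcondi} at exponent $2p$ with the $q$-uniform constant $K=C_{2p}$ of Remark~\ref{exact-q}. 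Set $R(x) := Q(x) Q((2q+1)x)$, so that $|R|^p = |Q|^{2p}$ on $\GS$. Finally, applying Proposition~\ref{p:peak-meas} to the rescaled set $E' := q(E-\tfrac{2a+1}{2q}) + \tfrac12 \pmod 1$ at the peaking scale $\delta=\theta/q$, choose a gap-peaking idempotent $T$ at $\tfrac12$ whose gaps exceed $\deg R$ and $N_0$, so that $f(x) := R(x)T(qx)$ is an idempotent with arbitrarily large gaps.

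\textbf{Estimation and main obstacle.} The change of variables $u = q(x-\tfrac{2a+1}{2q}) + \tfrac12$ carries the diophantine interval to $[\tfrac12-\tfrac{\theta}{q},\, \tfrac12+\tfrac{\theta}{q}]$, and in this range the Bernstein-type error $A_p(K+1) q^2 |t|$ of Lemma~\ref{error} is bounded by $A_p(C_{2p}+1)\theta$ uniformly in $q$. Combining the lower bound \eqref{deux} with the peaking property of $T$ on $E'$, and doubling by symmetry of $E$, gives
\[
\int_E |f|^p \geq \tfrac{2(1-\e)}{q}\bigl(|R(\tfrac{2a+1}{2q})|^p - A_p(C_{2p}+1)\theta\,\SQS(|Q|^{2p})\bigr)\int_\TT|T|^p,
\]
while \eqref{un} together with the $T$-negligibility of the off-peak parts of the intervals $(k/q, (k+1)/q)$ yields
\[
\int_\TT |f|^p \leq \tfrac{1}{q}\bigl(1 + A_p(C_{2p}+1)\theta + o_\e(1)\bigr)\SQS(|Q|^{2p})\int_\TT|T|^p.
\]
Since $|R(\tfrac{2a+1}{2q})|^p = |Q(\tfrac{1}{2q})|^{2p} \geq (\gamma_{2p}^\star-\e)\SQS(|Q|^{2p})$, the ratio exceeds $2\gamma_{2p}^\star - o(1)$ as $\theta,\e\to 0$. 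The central obstacle is the simultaneous management of three scales --- the diophantine window $\theta/q^2$, the peaking scale $\theta/q$, and the spectral size $O(q^2)$ of $R$ --- made compatible only by the precise rescaling $(2q+1)$ in the definition of $R$. It is this same tight balance that rules out products of more than two idempotents, since Lemma~\ref{error}-style Bernstein estimates at exponents $\leq 1$ fail; this is the structural origin of the $p>1/2$ barrier.
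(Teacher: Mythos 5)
Your construction and estimation for the base inequality $\gamma_p \geq 2\gamma_{2p}^\star$ essentially reproduce the paper's proof: pick a nearly optimal $P\in\PP_q$ for $\gamma_{2p}^\star(q)$, locate a grid point $\tfrac{2a+1}{2q}$ by the Sz\"usz--Schmidt approximation of Proposition~\ref{grid-half}, translate via Lemma~\ref{l:homominvqq} and Remark~\ref{degree-half}, pass to the two-factor product $R(x)=Q(x)Q((2q+1)x)$ matching $|Q|^{2p}$ on $\GS$, control $\theta/q^2$-scale perturbations with Lemma~\ref{error} using the grid condition at exponent $2p>1$, and multiply by the measurable gap-peaking idempotent $T$ of Proposition~\ref{p:peak-meas}. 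That part is correct and is the paper's argument.

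The genuine gap is your treatment of the ``with gap'' clause, which is part of the statement. You assert that choosing $T$ with gaps exceeding $N_0$ and $\deg R$ makes $f:=R\cdot T(q\,\cdot)$ an idempotent with arbitrarily large gaps. Only the idempotency follows: the spectrum of $f$ consists of all $n+qm$ with $n$ in the spectrum of $R$ and $m$ in that of $T$, so within a fixed $T$-block the gaps of $f$ coincide with those of $R$ itself. And $R=Q(x)Q((2q+1)x)$ with $Q=\mathbf{\Pi}_{2q}(P(b\,\cdot))\in\PP_{2q}$ may have consecutive frequencies (gap $1$), no matter how lacunary $T$ is. The paper handles this by an extra device: fix a large odd $\nu$, choose $q$ with $(\nu,q)=1$ (this is exactly why Proposition~\ref{grid-half} is proved with the coprimality refinement, via Schmidt's theorem), pick $b$ with $\nu b(2a+1)\equiv 1\pmod{2q}$, and replace $R$ by $\tilde Q(\nu x)\tilde Q((2q+1)\nu x)$, whose spectrum lies in $\nu\NN$ and hence has gaps $\geq\nu$; then $f$ inherits gaps $\geq\nu$. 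This $\nu$-dilation step, and the reason the diophantine lemma was stated with the $(\nu,q)=1$ option, is what your proof omits. As a smaller point, your closing heuristic slightly conflates two distinct obstructions: what caps the construction at two factors is the scale mismatch (an $L$-fold product would force a Bernstein error of order $q^{L}|t|$ against a window of size $\theta/q^2$), while the Marcinkiewicz--Zygmund threshold $>1$ for the exponent $2p$ of the single factor $Q$ is what then yields $p>1/2$; these combine to give the barrier but are not the same mechanism.
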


\begin{proof}

The proof is organized as the one of Proposition
\ref{discret2cont}. At the outset we have a measurable and
symmetric set $E\subset \TT$ with $|E|>0$. Let us first take
$C<\gamma_{2p}^{\star}$ arbitrarily close to
$\gamma_{2p}^{\star}$, then fix $\varepsilon$ a small constant.
Let $\eta$ and $\delta_0$  be given by Proposition
\ref{p:peak-meas} (second case), depending on $\e$. Let $\theta>0$
be a small constant which will be fixed later on, and $q_0$ large
enough so that, for $q>q_0$ one has $C<\gamma_{2p}^{\star}(q)$ and
$\theta/q\leq \delta_0$. With this data we consider some interval
centered at $(2a+1)/(2q)$ given by Proposition \ref{grid-half}.
Let $P\in\PP_{q}$ be such that
\begin{equation}\label{eq:Cdef}
\left|P\left(\frac 1{2q}\right) \right|^{2p} >~~ C ~~{
\sum_{k=0}^{q-1} \left|P\left(\frac {2k+1}{2q}\right)
\right|^{2p}} = C \SQS(|P|^{2p}).
\end{equation} By Lemma \ref{l:homominvqq}, and Remark \ref{degree-half},
we can find an idempotent $Q\in\PP_{2q}$ such that we have
\begin{equation}\label{R-at-a}
\left|Q\left(\frac{2a+1}{2q}\right)\right|^{2p}=
\left|P\left(\frac{1}{2q}\right)\right|^{2p}
\end{equation}
and
\begin{equation}\label{R-on-grid}
\sum_{k=0}^{q-1} \left|Q\left(\frac {2k+1}{2q}\right)
\right|^{2p}=\sum_{k=0}^{q-1} \left|P\left(\frac {2k+1}{2q}\right)
\right|^{2p},
\end{equation}
and also
\begin{equation}\label{R-on-grid0}
\sum_{k=0}^{q-1} \left|Q\left(\frac {k}{q}\right)
\right|^{2p}=\sum_{k=0}^{q-1} \left|P\left(\frac {k}{q}\right)
\right|^{2p}.
\end{equation}
Recall that $P\in \PP_q$, so for $2p>1$ according to Remark
\ref{exact-q} it satisfies the grid condition \eqref{Kcondi} with
a constant $C_{2p}$ depending only on $p$. Since $P$ and $Q$
attain exactly the same set of values both on the two grids $\GG$
and $\GS$,  the idempotent $Q$ also satisfies the grid-condition
\eqref{Kcondi} for $2p$ with the constant $C_{2p}$. So the
idempotent
\begin{equation}\label{eq:RdefprodP}
R(x):=Q( x) Q((2q+1) x),
\end{equation}
matching with $Q^2$ on both grids, also satisfies
\begin{equation}\label{eq:RQ2gridcondi}
|R(0)|^{p}\leq \SQ(|R|^{p}) \leq C_{2p}\SQS(|R|^{p}),
\end{equation}
i.e. the grid condition \eqref{Kcondi} holds for $R$, too (with
$K=C_{2p}$). Whence Lemma \ref{error} applies to $R$, so choosing
$\theta$ satisfying $A_p(C_{2p}+1)C^{-1}\theta \leq \e$ and in
view of \eqref{R-at-a}, \eqref{R-on-grid} and \eqref{R-on-grid0}
for all $|t|<\theta/q^2$ we obtain the estimates
\begin{equation}\label{une}
\sum_{k=0}^{q-1}\left|R\left(t+\frac {2k+1}{2q}\right)
\right|^{p}\leq (1+\e)\SQS(|P|^{2p})=(1+\e)\SQS(|R|^{p}),
\end{equation}
\begin{equation}\label{deuxe}
\left|R\left(t+\frac {2a+1}{2q}\right) \right|^{p}\geq
(1-\e)\left|R\left(\frac {2a+1}{2q}\right) \right|^{p},
\end{equation}
using also that, on comparing \eqref{eq:Cdef}, \eqref{R-at-a},
\eqref{R-on-grid} and \eqref{eq:RdefprodP} we are led to
\begin{equation}\label{eq:R2aplusone}
C\SQS(|R|^{p}) \leq \left|R\left(\frac {2a+1}{2q}\right)
\right|^{p}.
\end{equation}

Next, we will need a peaking idempotent at $1/2$, as obtained by
Proposition \ref{p:peak-meas}. This one will depend on our given
constants $\e$, $\eta$, $\delta=\theta/q$ and $N$ larger than the
degree of $R$, and also on a measurable set of finite measure
$E_\e$ that we define now. The mapping $x\mapsto qx$ is bijective
from $J:=(k/q,(k+1)/q )$ onto $(0,1)$, and we take for $E_\e$ the
image of $E\cap J$. It is clear that the condition
$$
|E_\e\cap [\frac 12-\delta, \frac 12+\delta]|>2(1-\eta)\delta
$$
has been satisfied. We take the idempotent $T$ provided by
Proposition \ref{p:peak-meas} for this data, satisfying
\begin{equation}\label{peak00}
\int_{E_\e\cap [\frac 12-\delta, \frac 12+\delta]}
|T|^p>(1-\varepsilon)\int_0^1 |T|^p.
\end{equation}
We finally consider the product
\begin{equation}\label{eq:Tdefmeas}
S(x):=T(qx) R(x),
\end{equation}
which is also an idempotent. We will prove as in Section
\ref{sec:oneconc} that
\begin{equation}\label{eq:SonTTm}
2C\int_{\TT} |S|^p\leq \kappa(\e)\int_E |S|^p ,
\end{equation}
with $\kappa(\varepsilon)$ being arbitrarily close to $1$ when
$\varepsilon$ is sufficiently small. In order to do this, we put
\begin{equation}\label{eq:JIdef}
J_k:=\left[\frac{k}{q},\frac{k+1}{q} \right], ~~ \qquad
I_k:=\left[ \frac{2k+1}{2q} -\frac{\theta}{q^2}, \frac{2k+1}{2q}+
\frac{\theta}{q^2} \right]
\end{equation}
for  $k=0,\dots,q-1$.
 From now on the proof of
the proposition is similar to the one of Proposition
\ref{discret2cont}. We repeat briefly the steps for the reader's
convenience. Denoting $\tau^p:=\int_{\TT}|T|^p$, we find, using
the property (\ref{peak00}), that
\begin{align}\label{eq:SonEmeas}\notag
\frac 12 \int_E |S|^p \geq \int_{I_a\cap E} |S|^p & \geq (1-
\varepsilon)\left|R\left(\frac{2a+1}{2q}\right)\right|^p \int_{I_a\cap E} |T(qx)|^pdx \\
& \geq (1-\varepsilon)\left|R\left(\frac{2a+1}{2q}\right)\right|^p
~~\frac{1}{q} \int_{E_\e\cap [\frac 12-\delta, \frac 12+\delta]} |T|^p \\
& \geq \frac{(1-\varepsilon)^2\tau^p}{q}
\left|R\left(\frac{2a+1}{2q}\right)\right|^p.\notag
\end{align}
Then we give an upper bound for the integral on the whole torus:
\begin{align*}
\sum_{k=0}^{q-1}\int_{I_k} |S|^p& =\int_{-\theta/q^2}^{\theta/q^2}
\sum_{k=0}^{q-1}\left|R\left(\frac{2k+1}{2q}+t\right)\right|^p
|T(qt)|^p ~dt \\ & \leq (1+\e)\SQS(|R|^{p})\frac{\tau^p}{q},
\end{align*}
while
\begin{align*}
\int_{J_k\setminus I_k}|S|^p&\leq 2\|R\|_\infty ^p \int_{\frac
kq+\frac \delta q}^{\frac kq+\frac 1{2 q}} |T(qx)|^p dx=2
\left|R(0)\right|^p \frac1q \int_{\frac \delta q}^{\frac 1{2}}
|T(x)|^p dx\\&\leq \frac {\varepsilon \tau^p}q
\left|R(0)\right|^p\leq \frac {C_{2p}\varepsilon \tau^p}q
\SQS(|R|^{p}),
\end{align*}
making use of \eqref{eq:RQ2gridcondi}, too. Summing the last
integrals over $k$, we obtain
\begin{equation}\label{eq:SonTmeas}
\int_{\TT} |S|^p \leq
\frac{\tau^p}{q}(1+\e+C_{2p}\e)\SQS(|R|^{p}).
\end{equation}
Now \eqref{eq:R2aplusone}, \eqref{eq:SonEmeas} and
\eqref{eq:SonTmeas} give \eqref{eq:SonTTm} with $\kappa(\e):=
$\hbox{$(1+\e+C_{2p}\e)(1-\varepsilon)^{-2}$}, concluding the
proof, except for assuring arbitrarily large gaps.
\medskip

It remains to indicate how to modify the proof to get peaking
idempotents with arbitrarily large gaps. So we fix $\nu$ as a
large odd integer, and we will prove that we can replace the
polynomial $Q(x)$ by some polynomial $\tilde Q(\nu x)$, with gaps
at least $\nu$. Recall first that we can take arbitrarily large
$q$ satisfying $(\nu,q)=1$. So we now choose $\tilde Q $ similarly
as before, to be the polynomial of degree $2q$ that coincides with
$P(bx)$ on the grid $\GG$, but now with $b$ chosen so that $\nu b
(2a+1)\equiv 1 $ mod $2q$. Such a $b$ exists, as $\nu(2a+1)$ and
$2q$ are coprime. We then fix
$$
R(x):=\tilde Q(\nu x) \tilde Q((2q+1)\nu x).
$$
There is an additional factor $\nu$, which modifies the value of
$\theta$, but otherwise the proof is identical. We know that
$\tilde Q(\nu x)$ and $P(bx)$, and thus $P(x)$, take globally the
same values on both grids $\GG$ and $\GS$, because in each case we
multiply by an odd integer that is coprime with $2q$. So in
particular the grid condition \eqref{complementQ} is satisfied
with $C_{2p}$ once again.
\end{proof}

Similarly, but with the grid $\GG$ instead of $\GS$, we obtain the
following.
\begin{proposition}\label{pto-p-int}
Let $p>2$ an even integer. Then there is $p$-concentra-tion  for
measurable sets, and $\gamma_p\geq 2 \max\left( \gamma_{p}^\sharp,
\gamma_{2p}^\sharp\right)$. Moreover, we can choose the
concentrating trigonometric polynomials with arbitrarily large
gaps.
\end{proposition}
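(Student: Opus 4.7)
The plan is to follow closely the proof of Proposition \ref{2pto-p}, adapting each ingredient from the grid $\GS$ to the grid $\GG$. Since $p$ is an even integer greater than $2$, Proposition \ref{prop:peakinghalf} fails and peaking at $1/2$ is unavailable, but Proposition \ref{p:peak-meas} (first case) furnishes peaking idempotents at $0$ with arbitrarily large gaps, precisely because $p>2$. I would use Proposition \ref{grid-one} to extract, from any symmetric measurable set $E$ of positive measure, a small interval $I_a := [a/q - \theta/q^2,\, a/q + \theta/q^2]$ whose intersection with $E$ has measure at least $(1-\eta)$ times the length of $I_a$, with $q$ coprime to any prescribed positive integer $\nu$. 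Lemma \ref{l:homominvariance} will then be used to translate any idempotent concentrating at $1/q$ on $\GG$ to one concentrating at $a/q$.

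The bound $\gamma_p \geq 2\gamma_p^\sharp$ is the direct one. Take $R_0 \in \PP_q$ almost realizing $\gamma_p^\sharp(q)$, and transport it via Lemma \ref{l:homominvariance} and Remark \ref{degree} to obtain $R \in \PP_q$ satisfying $|R(a/q)|^p > C\, \Sig_q(|R|^p)$ with $C$ arbitrarily close to $\gamma_p^\sharp(q)$. Since $R \in \mathcal{T}_q$ and $p>1$, Lemma \ref{l:Bernstein} immediately yields the two estimates $|R(a/q+t)|^p \geq (1-\e)|R(a/q)|^p$ and $\sum_{k=0}^{q-1}|R(k/q+t)|^p \leq (1+\e)\Sig_q(|R|^p)$, uniformly for $|t|\leq \theta/q^2$, provided $\theta$ is chosen small enough depending on $\e$. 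Setting $S(x):=T(qx)R(x)$ with $T$ the gap-peaking idempotent at $0$ supplied by Proposition \ref{p:peak-meas} for the parameters $\e$, $\eta$, $\delta=\theta/q$, and $N$ exceeding the degree of $R$, the computation of Proposition \ref{discret2cont} goes through essentially unchanged and gives $\int_E |S|^p \geq 2C\,\kappa(\e)^{-1}\int_\TT |S|^p$ with $\kappa(\e)\to 1$; taking $\liminf_{q\to\infty}$ yields $\gamma_p \geq 2\gamma_p^\sharp$.

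The bound $\gamma_p \geq 2\gamma_{2p}^\sharp$ parallels the squaring construction from Proposition \ref{2pto-p}. Starting from $P \in \PP_q$ almost realizing $\gamma_{2p}^\sharp(q)$, apply Lemma \ref{l:homominvariance} and Remark \ref{degree} to produce $Q \in \PP_q$ with $|Q(a/q)|^{2p} > C\, \Sig_q(|Q|^{2p})$, then form the idempotent $R(x) := Q(x)\,Q((q+1)x)$, which is a genuine idempotent because every frequency $qm+h$ with $0\leq h<q$ has a unique such representation. Since $(q+1)k/q \equiv k/q \pmod 1$, the factor $Q((q+1)x)$ takes the same values as $Q(x)$ on $\GG$, so $|R|^p = |Q|^{2p}$ pointwise on $\GG$ and the concentration transfers. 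An adaptation of Lemma \ref{error}, in which $\SQS$ is replaced by $\SQ$ throughout and Lemma \ref{l:Bernstein} is applied to each factor of $R$, yields the analogs of \eqref{un}-\eqref{deux}; the auxiliary grid-condition needed in the previous setting becomes automatic here since $0\in\GG$ forces $|R(0)|^p = \|R\|_\infty^p$ to be a summand of $\Sig_q(|R|^p)$. Multiplying by $T(qx)$ exactly as before produces an idempotent $S \in \PP$ realizing the second bound.

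The main technical obstacle is the careful bookkeeping of Marcinkiewicz--Zygmund and Bernstein constants through the product $Q(x)Q((q+1)x)$, but the pattern of argument is precisely that of Lemma \ref{error} and only the constants need to be reverified. Arbitrarily large gaps are then obtained exactly as at the end of the proof of Proposition \ref{2pto-p}: for each prescribed odd $\nu$ one replaces $Q(x)$ by $\widetilde Q(\nu x)$ for a suitable $\widetilde Q \in \mathcal{T}_q$ agreeing on $\GG$ with the pullback of $P$ by $x\mapsto bx$, where $b$ is the multiplicative inverse of $\nu a$ modulo $q$; this requires $(\nu,q)=1$, which is precisely the coprimality clause built into Proposition \ref{grid-one}.
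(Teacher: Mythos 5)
Your proposal is correct and follows essentially the same route as the paper, which itself only sketches this proof as a straightforward adaptation of Proposition \ref{2pto-p}: the direct bound via $\gamma_p^\sharp$ with $R={\mathbf\Pi}_q(P(b\,\cdot))\in\PP_q$, and the squaring construction $R(x)=Q(x)Q((q+1)x)$ for the $\gamma_{2p}^\sharp$ bound, combined with gap-peaking at $0$ from Proposition \ref{p:peak-meas} and Proposition \ref{grid-one}. Your observation that the grid condition becomes automatic on $\GG$ (since the Bernstein estimates and the sup norm $|R(0)|^p$ are already controlled by $\Sig_q(|R|^p)$) is exactly the reason the paper calls this case ``even simpler.''
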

\begin{proof}
We do not give the proof, since most modifications  are
straightforward, and even simpler. Now if $\gamma_{p}^\sharp\geq
\gamma_{2p}^\sharp$, we consider $C<\gamma_{p}^\sharp$ and $P$
satisfying
\begin{equation}\label{eq:Cdef+int}
\left|P\left(\frac 1{q}\right) \right|^{p} >~~ C ~~{
\sum_{k=0}^{q-1} \left|P\left(\frac {k}{q}\right) \right|^{p}}.
\end{equation}
We build $R:=Q:={\mathbf \Pi}_q P(b~\cdot)$ of degree lower than
$q$, using Lemma \ref{l:homominvariance} and Remark \ref{degree},
with $b$ chosen such that $b\cdot a\equiv 1 \mod q$, and thus
$a/q$ is mapped on $1/q$. Thus we obtain the required
concentration as above.

If $\gamma_{p}^\sharp < \gamma_{2p}^\sharp$, we take
$C<\gamma_{2p}^\sharp$ and an idempotent $P\in\PP_q$ satisfying
\begin{equation}\label{eq:Cdef2p}
\left|P\left(\frac 1{q}\right) \right|^{2p} >~~ C ~~{
\sum_{k=0}^{q-1} \left|P\left(\frac {k}{q}\right) \right|^{2p}}.
\end{equation}
In this case we consider $R:=R(x):=Q( x) Q((q+1) x)$ with
$Q:={\mathbf \Pi}_{q} P(b~\cdot)\in\PP_q$, and the proof is even
more like the above argument.

\end{proof}

\section{Positive definite trigonometric
polynomials}\label{sec:posdef}

The proof of Proposition \ref{2pto-p} generalizes directly to the
class $\PP^+$, with the main difference that, when considering the
values of a polynomial $P$ on some grid $\GG$ or $\GS$, we can
always consider the projected polynomial $\mathbf{\Pi}_{2q} (P)$,
taking the same values on ${\mathbb G}_{2q}$ and hence both on
$\GG$ and on $\GS$: here we need not be concerned for occasional
coincidences of projected terms in the sum, as the projection
$\mathbf{\Pi}_{2q}$ leaves $\PP^{+}$ invariant anyway. Therefore,
the concentration constants $\gamma^{+}_p$, that we will obtain
for positive definite functions and measurable sets, will be the
same as the ones for open sets (i.e. $c_p$). In particular, we
have the following.

\begin{theorem}\label{Lpto-p}
Let $p>0$ not an even integer. Then there is full
$p$-concentration for the class $\PP^+$ for measurable sets.
Moreover, we can choose the concentrating positive definite
trigonometric polynomials with arbitrarily large gaps.
\end{theorem}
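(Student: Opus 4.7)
The plan is to adapt the proof of Proposition \ref{2pto-p} to the class $\PP^{+}$, exploiting two features that were unavailable in the idempotent setting: first, the projection $\mathbf{\Pi}_{2q}$ maps $\PP^{+}$ into itself, so no frequency collisions have to be feared when reducing modulo $2q$; second, any finite product of members of $\PP^{+}$ remains in $\PP^{+}$, irrespective of whether the Fourier supports of the factors overlap. The second feature is decisive: it removes the restriction to two-factor products that forced $p>1/2$ and prevented reaching the value $1$ in Proposition \ref{2pto-p}, allowing $L$-fold products with $L$ arbitrarily large. As $L$ grows, the discrete grid ratio on $\GS$ is driven to $1$ just as in Section \ref{sec:pcondiconc}, thereby producing full concentration.

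Concretely, given a symmetric measurable set $E\subset\TT$ of positive measure, $\varepsilon>0$ and a prescribed gap $N$, I would first fix $L$ so large that $\inf_{0<t<1/2}A(Lp,t)<1+\varepsilon$; this is possible for any $p>0$ not an even integer, since $A(Lp,1/4)=(1-2^{-Lp})\zeta(Lp)\to 1$ as $L\to\infty$. By Lemma \ref{l:majA} I would then choose a large $q$ coprime to a prescribed odd multiplier $\nu\geq N$, and $r<q$, so that $\bigl|D_r(1/(2q))\bigr|^{Lp}/\SQS(|D_r|^{Lp})>1-\varepsilon$. Proposition \ref{grid-half} provides an irreducible fraction $(2a+1)/(2q)$ whose $\theta/q^2$-neighborhood is filled by $E$ up to a defect $\eta$. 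Picking an odd $b$ with $\nu b(2a+1)\equiv 1\pmod{2q}$, I set $Q:=\mathbf{\Pi}_{2q}(D_r(b\,\cdot))\in\PP^{+}$ and
\begin{equation*}
R(x) := \prod_{l=0}^{L-1} Q\bigl(\nu((2q)^l+1)\,x\bigr)\in\PP^{+}.
\end{equation*}
Since $\nu((2q)^l+1)\,k/(2q)\equiv \nu k/(2q)\pmod 1$ and $k\mapsto \nu b(2k+1)\bmod 2q$ permutes the odd residues, the sequence $\bigl(|R((2k+1)/(2q))|^p\bigr)_k$ is a rearrangement of $\bigl(|D_r((2k+1)/(2q))|^{Lp}\bigr)_k$ with $|R((2a+1)/(2q))|^p=|D_r(1/(2q))|^{Lp}$, so the on-grid concentration of strength $1-\varepsilon$ holds. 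Moreover every frequency of $R$ is a multiple of $\nu$, ensuring gaps $\geq\nu$. Finally $S(x):=R(x)\,T(qx)\in\PP^{+}$, where $T$ is the peaking idempotent at $1/2$ furnished by Proposition \ref{p:peak-meas} with sufficiently small $\delta$ and gap exceeding the degree of $R$, is the candidate.

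The main obstacle is to extend Lemma \ref{error} from the two-factor to the $L$-factor setting. Iterating its Cauchy--Schwarz step (or applying H\"older to $L$ factors) reduces the off-grid comparison of $\sum_k|R(t+(2k+1)/(2q))|^p$ with $\SQS(|R|^p)$ to $L$ applications of Lemma \ref{l:Bernstein-half} at exponent $Lp$. The crucial simplification in the $\PP^{+}$ case, and the reason full concentration holds for every $p>0$ not an even integer, is that only $Lp>1$ is now required, rather than the $2p>1$ of Proposition \ref{2pto-p}; the grid condition needed by Lemma \ref{l:Bernstein-half} is automatic with constant $C_{Lp}$ by Remark \ref{exact-q}, since $Q\in\mathcal{T}_{2q}$. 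The off-grid error picks up a factor of order $\nu(2q)^{L-1}|t|$, but as $L$ and $\nu$ are chosen before $q$ and $\theta$, this is made negligible by taking $\theta$ small. Combining the on-grid estimate with the peaking property at $1/2$ exactly as in Proposition \ref{2pto-p} and letting $\varepsilon\to 0$ yields $\gamma_p^{+}=1$ with arbitrarily large gaps, as claimed.
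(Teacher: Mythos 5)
Your overall architecture is the paper's: reduce to discrete concentration on $\GS$ via Sz\"usz's theorem, transfer to an interval with a Bernstein--Marcinkiewicz--Zygmund estimate, and multiply by a gap-peaking idempotent at $1/2$; you also correctly identify the two advantages of $\PP^{+}$ (stability under $\mathbf{\Pi}_{2q}$ and under products). But there is a genuine gap in how you exploit the second advantage. You form $R(x)=\prod_{l=0}^{L-1}Q\bigl(\nu((2q)^l+1)x\bigr)$, a product of \emph{dilates}, whose degree is of order $\nu(2q)^{L}$. The off-grid error in the Lemma \ref{error}-type comparison is governed by $\deg(R)\cdot|t|$ with $|t|\le\theta/q^2$, i.e.\ by your own accounting it is of order $\nu(2q)^{L-1}\theta/q^2\sim \nu 2^{L-1}\theta\,q^{L-3}$. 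Since you need $L$ large (to make $A(Lp,1/4)=(1-2^{-Lp})\zeta(Lp)<1+\e$), this is unbounded in $q$ for every fixed $\theta$ as soon as $L\ge 4$. Your claim that it ``is made negligible by taking $\theta$ small'' reverses the quantifiers: $\theta$ is the \emph{input} to Proposition \ref{grid-half} and must be fixed before $q$ is produced; nor can you take $\theta=\theta(q)\sim q^{2-L}$ afterwards, because then $\sum_q\theta(q)/q<\infty$ and the metric Diophantine approximation fails for almost every point, so no suitable $q$ need exist at all. This degree blow-up is precisely the obstruction that restricted Proposition \ref{2pto-p} to two factors and to $p>1/2$.

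The feature of $\PP^{+}$ that the paper actually uses --- and that you state but do not exploit --- is that one may take the \emph{plain power} $R:=Q^{L}$ (or $\tilde Q(\nu\,\cdot)^{L}$ for the gap version) instead of a product of dilates: it lies in $\PP^{+}$, has the same moduli on the grid $\GS$, and has degree $<2L\nu q=o(q^{2})$, so the Bernstein error is $O(L\nu q\cdot\theta/q^{2})\to 0$ and the quantifier order of Proposition \ref{2pto-p} goes through unchanged. (The paper also decouples the two roles of $L$: a fixed $L>1/p$ suffices for the Marcinkiewicz--Zygmund step, while the grid ratio is driven to $1/2$ by choosing $P$ realizing $c_{Lp}^{\star}(q)$; the possibly huge degree of that $P$ is harmless because it is projected into $\mathcal{T}_{2q}$ before any Bernstein estimate is invoked.) With $R=Q^{L}$ your argument closes; with the dilated product it does not.
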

\begin{proof} The proof follows the same lines as the one of Proposition
\ref{2pto-p}, but is simpler. We know that for $p\notin 2\NN$
there is full $p$-concentration at $1/2$, and also from Section
\ref{sec:pcondiconc} that this implies $c_p^{\star} = 1/2$, c.f.
the proof of Proposition \ref{prop:pcondiconc}. So it is
sufficient to prove the following lemma, which is very similar to
Proposition \ref{2pto-p}.
\end{proof}
\begin{lemma} Let $p>0$. Then there is $p$-concentration for the
class $\PP^+$ for measurable sets, and if $p\notin 2\NN$, then the
level of concentration satisfies $\gamma_p^{+}\geq 2
c^{\star}_{Lp}$ for any $L$ such that $Lp>1$. Moreover, unless
$p=2$, we can choose the concentrating trigonometric polynomials
with arbitrarily large gaps.
\end{lemma}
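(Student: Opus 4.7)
The plan is to follow the proof of Proposition \ref{2pto-p} closely, exploiting two simplifications that become available in the positive-definite class $\PP^+$. First, the projection $\mathbf{\Pi}_{2q}$ preserves $\PP^+$ without any coprimality requirement, so it sends an arbitrary $P\in\PP$ (of possibly large degree) into $\PP^+_{2q}$; this allows us to work with the less restrictive constant $c^\star_{Lp}$ rather than with $\gamma^\star_{Lp}$. Second, products of positive definite polynomials remain positive definite, so the two-factor construction $Q(x)Q((2q+1)x)$ of Proposition \ref{2pto-p} can be upgraded to an $L$-fold product, lifting the constraint $2p>1$ (enforced there by idempotency) down to the weaker $Lp>1$.

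Given a symmetric measurable $E$ of positive measure, fix $\varepsilon>0$, $L$ with $Lp>1$, and $c<c^\star_{Lp}$. Pick a large odd $\nu$ (for gap control) and use Proposition \ref{grid-half} to find a large denominator $q$ coprime to $\nu$ and a reduced fraction $(2a+1)/(2q)$ such that $E$ almost fills the interval of radius $\theta/q^2$ around $(2a+1)/(2q)$. Select $P\in\PP$ with $|P(1/(2q))|^{Lp}>c\,\SQS(|P|^{Lp})$, choose $b$ with $b\nu(2a+1)\equiv 1\pmod{2q}$, set $\tilde Q:=\mathbf{\Pi}_{2q}(P(b\,\cdot))\in\PP^+_{2q}$, and form
\begin{equation*}
R(x):=\prod_{j=0}^{L-1}\tilde Q\bigl((1+2qj)\nu x\bigr)\in\PP^+.
\end{equation*}
The dilations $M_j:=1+2qj$ are chosen because they satisfy $M_j\equiv 1\pmod{2q}$ (so every factor of $R$ coincides with $\tilde Q(\nu\,\cdot)$ on $\mathbb{G}_{2q}$, transporting the concentration of $P$ at $(2a+1)/(2q)$) but remain of size $O(qL)$, as opposed to the exponential $(2q+1)^j$ that would ruin the Bernstein estimates for $L\geq 3$. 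The spectrum of $R$ lies in $\nu\NN$, so the gap is at least $\nu$.

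The principal technical step is an $L$-factor extension of Lemma \ref{error}. Writing $X_k^{(j)}(t):=|\tilde Q(\nu(2k+1)/(2q)+M_j\nu t)|^{Lp}$, one has $|R(t+(2k+1)/(2q))|^p=\prod_{j=0}^{L-1}(X_k^{(j)}(t))^{1/L}$, and iterated H\"older with exponent $L$ in each slot reduces $\sum_k|R(t+(2k+1)/(2q))|^p$ to a product of $L$ factors of the form $\sum_k X_k^{(j)}(t)$. Each of these is in turn controlled by Lemma \ref{l:Bernstein-half} applied to $\tilde Q$: the effective shift is $M_j\nu|t|=O(qL\nu|t|)$, and the grid condition \eqref{Kcondi} at exponent $Lp$ is inherited from $P$ via the bijection-on-the-grid argument of Proposition \ref{2pto-p} with constant $K=C_{Lp}$. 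Shrinking $\theta$ appropriately then gives, for $|t|<\theta/q^2$, both the upper bound $\sum_k|R(t+(2k+1)/(2q))|^p\leq(1+\varepsilon)\SQS(|R|^p)$ and the companion lower bound $|R(t+(2a+1)/(2q))|^p\geq(1-\varepsilon)|R((2a+1)/(2q))|^p$, using the factor-wise inequality $X_a^{(j)}(t)\geq X_a^{(j)}(0)-\sum_k|X_k^{(j)}(t)-X_k^{(j)}(0)|$.

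Finally, pick a peaking idempotent $T$ at $1/2$ via Proposition \ref{p:peak-meas} with gap larger than $\deg R$ and adapted to the rescaled set $E_\varepsilon:=\{qx\bmod 1:x\in E\cap(a/q,(a+1)/q)\}$. Then $S:=R\cdot T(q\,\cdot)\in\PP^+$ and the integral estimates of Proposition \ref{2pto-p} transfer verbatim to yield $\gamma_p^+\geq 2c^\star_{Lp}$. When $p\in 2\NN$ the full peaking at $1/2$ fails, but the identical construction with $\GG$ replacing $\GS$ and peaking at $0$ (Proposition \ref{th:peakingatzero}) still delivers positive $p$-concentration for $\PP^+$, and the arbitrary-gap statement is preserved in every case except $p=2$, which is excluded by the Ingham-type argument of Section \ref{sec:negative}. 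The main obstacle is the iterated-H\"older/Bernstein bookkeeping in the $L$-factor extension of Lemma \ref{error}, together with the careful choice of the mild dilations $M_j=1+2qj$; once these are arranged, the rest is a direct transcription of the earlier proof.
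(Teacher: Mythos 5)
Your construction works, but it is considerably more elaborate than the paper's. You correctly observe that $\mathbf{\Pi}_{2q}$ preserves $\PP^+$ without coprimality constraints (so $c^\star_{Lp}$ rather than $\gamma^\star_{Lp}$ appears), and that products of positive definite polynomials stay positive definite; but having made the second observation, you then build an $L$-fold product of distinct dilates $\prod_{j=0}^{L-1}\tilde Q((1+2qj)\nu x)$, which forces you to develop an $L$-factor extension of Lemma \ref{error} via iterated H\"older, and to engineer the mild dilations $M_j=1+2qj$ to keep Bernstein under control. The paper avoids all of this: since $\PP^+$ is closed under ordinary powers (which $\PP$ is not — this is precisely why Proposition \ref{2pto-p} is stuck with the two-factor dilated product), one simply sets $R:=Q^L$ with $Q:=\mathbf{\Pi}_{2q}(P(b\,\cdot))$, and notes that $R\in\PP^+$ has degree $<2Lq$, independent of $q$ to first order in contrast to the $O(q^2)$ degree occurring in the idempotent case. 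The Bernstein-type inequalities then apply directly and uniformly in $L$; and, as the paper remarks parenthetically, one may even pass to $\mathbf{\Pi}_{2q}(Q^L)\in\mathcal{T}_{2q}\cap\PP^+$, which shares the grid values of $Q^L$ and puts Lemma \ref{l:Bernstein-half} at one's disposal without any multi-factor bookkeeping. Your route is sound modulo the sketched $L$-factor lemma, but it reintroduces complications that the positive-definite setting was supposed to remove; the power construction is the idea you want.
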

\begin{proof}
We only sketch the modifications to accomplish in the proof of
Proposition \ref{2pto-p}. Now $C<c_{Lp}^\star$. Naturally, we
choose $P\in\PP_{q}$ such that,
\begin{equation}\label{eq:Cdef+}
\left|P\left(\frac 1{2q}\right) \right|^{Lp} >~~ C ~~{
\sum_{k=0}^{q-1} \left|P\left(\frac {2k+1}{2q}\right)
\right|^{Lp}}.
\end{equation}
Then, as before, we choose $Q:=\mathbf{\Pi}_{2q}(P(b~\cdot))$. Now
we can take $R:=Q^L$, as clearly $R\in \PP^+$, and its degree is
less than $2Lq$ (instead of $2q(2q+1)$ previously). So the
Bernstein type inequalities can be applied more easily, with
better estimates than previously, not restricting the value of $L$
in this case. (In fact, we could as well consider
$\mathbf{\Pi}_{2q} R \in {\mathcal T}_{2q}\cap \PP^{+}$, too.)

Note that here there is no need to $L\to \infty$, but only to take
some $L>1/p$, as we already have $c_p^{\star}=1/2$ for $p\notin
2\NN$. On the other hand $L>1/p$ we really do need, as we apply
Marcinkievicz-Zygmund inequalities in the proof.

Otherwise the proof for $Lp>1$ can be adapted from Proposition
\ref{2pto-p}, with all other modifications being straightforward.
\end{proof}

When $p\in2\NN$, we do not have gap-peaking at $1/2$, but, unless
$p=2$, we have that at 0. With a completely analogous argument, we
obtain the corresponding result as follows.

\begin{theorem}\label{Lpto-pNN}
Let $p\ne 2$ be an even integer. Then there is $p$-concentra\-tion
for the class $\PP^+$ for measurable sets at the level
$\gamma_p^{+}\geq 2 \sup_{L\in \NN} c_{Lp}^\sharp$. Moreover, we
can choose the concentrating positive definite trigonometric
polynomials with arbitrarily large gaps.
\end{theorem}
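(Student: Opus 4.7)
The plan is to imitate closely the proof of Theorem \ref{Lpto-p} and its preceding lemma, with two systematic substitutions: replace the grid $\GS$ everywhere by $\GG$, and replace the gap-peaking idempotent at $1/2$ by a gap-peaking idempotent at $0$. The latter exists for all $p\ne 2$ thanks to Proposition \ref{th:peakingatzero} combined with the quantitative version in Proposition \ref{p:peak-meas} (first case), and the former is justified by Khintchine's theorem in the shape of Proposition \ref{grid-one}, which provides symmetric intervals of near-full density centered at irreducible fractions $a/q$ with $q$ arbitrarily large and $(a,q)=1$.

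First I would fix $C<c_{Lp}^\sharp$ for some $L$ with $Lp>1$, then choose $q$ large enough with $(a,q)=1$ so that there is $P\in\PP$ with $|P(1/q)|^{Lp}>C\sum_{k=0}^{q-1}|P(k/q)|^{Lp}$, and apply Proposition \ref{grid-one} to obtain an interval $I_a:=[a/q-\theta/q^2,a/q+\theta/q^2]$ satisfying $|I_a\cap E|\geq(1-\eta)\cdot 2\theta/q^2$. Next I would form $Q:=\mathbf{\Pi}_q(P(b\,\cdot))\in\PP_q^{+}$, where $ab\equiv 1\pmod q$; by Remark \ref{degree} this is an idempotent of degree $<q$, and crucially, because $\mathbf{\Pi}_q$ preserves $\PP^{+}$, its $L$-th power $R:=Q^L$ still lies in $\PP^{+}$ and has degree $<Lq$. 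The values of $|R|^p$ on the grid $\GG$ agree (up to the relabelling by $b$) with $|P|^{Lp}$, so $|R(a/q)|^p>C\,\SQ(|R|^p)$.

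Then I would apply a Bernstein/Marcinkiewicz-Zygmund estimate to $R\in\mathcal{T}_{Lq}$ on the grid $\GG$: the analogue of Lemma \ref{error} adapted to $\GG$ (which is in fact more direct, using Lemma \ref{l:Bernstein} and the trivial grid condition $|R(0)|^p\leq\SQ(|R|^p)$) yields, for $|t|<\theta/q^2$,
\begin{equation*}
\sum_{k=0}^{q-1}|R(t+k/q)|^p\leq(1+\e)\SQ(|R|^p),\qquad |R(t+a/q)|^p\geq(1-\e)|R(a/q)|^p,
\end{equation*}
provided $\theta$ is chosen sufficiently small in terms of $\e$ and the Marcinkiewicz-Zygmund constant for exponent $p$. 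Then, with $\delta:=\theta/q$, I would invoke Proposition \ref{p:peak-meas} (first case) at $0$ to produce a gap-peaking idempotent $T$ with gaps exceeding $\deg R$, peaking on the rescaled set $E_{\e}$ obtained from $E\cap(-1/(2q),1/(2q))$ by the dilation $x\mapsto qx$. Setting $S(x):=R(x)T(qx)\in\PP^{+}$, the integral of $|S|^p$ on $I_a\cap E$ is bounded below using the second inequality and peaking, while the integral on $\TT$ splits into the symmetric grid sum (bounded by the first inequality) plus negligible tails.

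The main obstacle, compared to the $\PP^+$ proof at $1/2$, is that $0$ itself lies in $\GG$, so a priori the term $|R(0)|^p$ could dominate $\SQ(|R|^p)$ and force $c_{Lp}^\sharp=0$ in the limit; this is avoided precisely because the gap-peaking idempotent $T$ at $0$ concentrates mass on the tiny scale $\delta/q$ uniformly across all translates $I_k$, so every grid term contributes comparably and the ratio is controlled by $c_{Lp}^\sharp(q)$. For the last clause on arbitrarily large gaps, I would apply the same device as in Proposition \ref{2pto-p}: replace $Q$ by $\widetilde Q(\nu\,\cdot)$ where $\widetilde Q:=\mathbf{\Pi}_q(P(b'\,\cdot))$ with $\nu\cdot b'\cdot a\equiv 1\pmod q$ and $(\nu,q)=1$ (the latter being possible by the last assertion of Proposition \ref{grid-one}); the new polynomial has gaps of size at least $\nu$ and still matches $P$ on $\GG$ up to a bijective relabelling, so all the above estimates carry over verbatim. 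Taking the supremum over $L$ gives the stated bound $\gamma_p^{+}\geq 2\sup_{L\in\NN}c_{Lp}^{\sharp}$.
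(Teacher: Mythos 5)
Your proposal is essentially the argument the paper has in mind: the authors only say the proof is "with a completely analogous argument" to Theorem~\ref{Lpto-p}, and you have carried out exactly the systematic substitution $\GS\rightsquigarrow\GG$, $1/2\rightsquigarrow 0$, $c_{Lp}^\star\rightsquigarrow c_{Lp}^\sharp$, Szűsz/Schmidt $\rightsquigarrow$ Khintchine/Schmidt, Proposition~\ref{grid-half}$\rightsquigarrow$Proposition~\ref{grid-one}. The key simplifications you note — that $R:=Q^L\in\PP^+$ (possible only because the class is $\PP^+$, not $\PP$) and that the grid condition $|R(0)|^p\leq\SQ(|R|^p)$ is trivial since $0\in\GG$ — are precisely the paper's points, and the gap construction via $\widetilde Q(\nu\cdot)$ with $(\nu,q)=1$ is also the paper's device.

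Two small imprecisions are worth tightening. First, $Q=\mathbf{\Pi}_q(P(b\,\cdot))$ is in general only positive definite, not an idempotent (Remark~\ref{degree} gives idempotency only when $P\in\PP_q$, while here $P$ ranges over all of $\PP$ in the definition of $c_{Lp}^\sharp$); this is harmless because the target class is $\PP^+$ and $\mathbf{\Pi}_q$ preserves $\PP^+$, which you already invoke. Second, Lemma~\ref{l:Bernstein} is stated for $P\in\mathcal{T}_q$, whereas your $R=Q^L$ lies in $\mathcal{T}_{Lq}$; to apply the Marcinkiewicz--Zygmund step exactly as stated one should replace $R$ by $\mathbf{\Pi}_q R\in\mathcal{T}_q\cap\PP^+$, which coincides with $R$ on $\GG$ and so carries the same discrete concentration — this is exactly the parenthetical remark the paper inserts in the lemma before Theorem~\ref{Lpto-p}. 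With that routine adjustment the proof is complete.
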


\section{Concentration of random idempotents}\label{sec:random}

We will see that part of the estimates proved for $\PP^+$ in
Section \ref{sec:posdef} extend to $\PP$. This will be shown by
certain random constructions of idempotents.

We have seen in Section \ref{sec:unilower} that $\inf_t
B(\lambda,t)$ appears naturally when proving lower bounds for
$c_p$ when $p>2$ is an even integer: for $c_p$ (and thus for
$\gamma_p^+$) we obtained the lower bound $\sup_L 2/\inf_t
B(Lp,t)$. We will now prove the same lower bound for $\gamma_p$.
\begin{proposition}\label{Lpto-p-int}
Let $p>2$ an even integer. Then, for $L\geq 1$ an integer,
$\gamma_{p} \geq 2/\inf_t B(Lp,t)$.
\end{proposition}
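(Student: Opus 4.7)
The plan is to extend Proposition \ref{pto-p-int} from two-factor to $L$-fold products of Dirichlet kernels, leveraging the even-integer structure of $p=2k$. The case $L=1$ already follows from Proposition \ref{pto-p-int} combined with $(\gamma_p^\sharp)^{-1}\leq \inf_t B(p,t)$ from \eqref{eq:gammasharpest}; I focus on $L\geq 2$.

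Fix $\varepsilon>0$ and $t$ nearly minimizing $B(Lp,\cdot)$. For arbitrarily large $q$ coprime to a prescribed $\nu$ (for the gap refinement), Proposition \ref{grid-one} furnishes an irreducible fraction $a/q$ with $|E\cap I_a|\geq (1-\eta)\cdot 2\theta/q^2$, where $I_a:=[a/q-\theta/q^2,\,a/q+\theta/q^2]$. Take $r/q\to t$, let $b$ be the multiplicative inverse of $a$ modulo $q$ (Lemma \ref{l:homominvariance}), and form
\[
R(x) := D_r(bx)\prod_{\ell=1}^{L-1}D_r\bigl(b(q^\ell+1)x\bigr).
\]
For $q$ sufficiently large, the scalings $\{b(q^\ell+1)\}$ are dissociated (any integer relation $\sum_\ell b(q^\ell+1)c_\ell=0$ with $|c_\ell|\leq kr$ forces $c_\ell=0$, by unique base-$q$ digit expansion), so $R$ is a genuine idempotent with $|R(k/q)|=|D_r(bk/q)|^L$ on $\GG$; in particular the discrete concentration ratio at $a/q$ is at least $1/B(Lp,r,q)$. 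By the same dissociation, the combinatorial Plancherel identity $\|R\|_{2k}^{2k}=\|D_r\|_{2k}^{2kL}$ holds exactly, since the count of balanced $2k$-tuples in $\supp(R)$ decouples factorwise.

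Combine with a gap-peaking idempotent $T$ at $0$ of gap exceeding $\deg R$ (Proposition \ref{p:peak-meas}) to form $S(x):=R(x)T(qx)$, an idempotent. The concentration analysis parallels Proposition \ref{pto-p-int}, with the crucial adaptation that $\deg R\sim q^{L-1}$ is too large for direct Bernstein on $I_a$ when $L\geq 3$; in its place, I would use a Fourier--Parseval decomposition. Since $|T(qx)|^{2k}$ has Fourier support in $q\ZZ$, one computes $\int_\TT|S|^{2k}=\sum_{n\in\ZZ}\widehat{|R|^{2k}}(qn)\,\widehat{|T|^{2k}}(-n)$. Dissociation forces $\widehat{|R|^{2k}}(qn)=0$ for $0<|n|<b$ (the smallest nontrivial difference arising from the support structure is $\pm b$), so upon selecting $a$ with $b>2k\deg T$ -- achievable for most coprime residues $a$ modulo a large $q$, and compatible with the Diophantine selection -- all cross terms vanish and $\int_\TT|S|^{2k}=\|R\|_{2k}^{2k}\|T\|_{2k}^{2k}$ exactly. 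A parallel Fourier expansion of $\mathbf{1}_{E\cap I_a}$ combined with the Diophantine density yields $\int_E|S|^{2k}\geq (1-O(\varepsilon))\cdot 2|R(a/q)|^{2k}\|T\|_{2k}^{2k}/q$. Passing to $q\to\infty$ and taking $t$ to minimize $B(Lp,\cdot)$ yields $\gamma_p\geq 2/\inf_t B(Lp,t)$; the arbitrarily-large-gap refinement follows via the standard $\nu$-coprime trick.

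The main obstacle is precisely the unavailability of global Bernstein on the Diophantine window $\theta/q^2$ when $\deg R\sim q^{L-1}$ and $L\geq 3$. The ``random idempotent'' viewpoint resolves this through two exact identities specific to even $p=2k$: the combinatorial identity $\|R\|_{2k}^{2k}=\|D_r\|_{2k}^{2kL}$, and the spectral gap $\widehat{|R|^{2k}}(qn)=0$ for $0<|n|<b$. The dissociated scalings $\{b(q^\ell+1)\}$ effectively act as generic/random parameters making both identities exact; this mechanism is unavailable for non-even $p$, where $|R|^p$ is not a finite integer combination of exponentials indexed by balanced tuples.
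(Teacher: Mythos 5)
Your construction stalls at the step where you claim the cross terms in the Parseval expansion of $\int_\TT|S|^{2k}$ vanish. You need $\widehat{|R|^{2k}}(qn)=0$ for all $0<|n|\leq k\deg T$, and you propose to get this by ``selecting $a$ with $b>2k\deg T$''. This is impossible twice over. First, $b$ is the inverse of $a$ modulo $q$, so $b<q\leq\deg R$, while $T$ must have gaps (hence degree) exceeding $\deg R$ for $S=R\cdot T(q\,\cdot)$ to be an idempotent; thus $b<\deg T$ always. Second, you do not get to choose $a$: Proposition \ref{grid-one} hands you whichever fraction approximates a density point of $E$. And the spectral gap itself is much smaller than you assert: already for $L=2$, writing the frequencies of $R$ as $b(j_0+(q+1)j_1)$, one checks that $\widehat{|R|^2}(qbc)\neq 0$ for every $|c|\leq r-1$, and since all Fourier coefficients of $|R|^{2k}$ and $|T|^{2k}$ are nonnegative, these cross terms genuinely inflate $\int_\TT|S|^{2k}$ above $\|R\|_{2k}^{2k}\|T\|_{2k}^{2k}$ and degrade the concentration ratio. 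The companion claim that a ``Fourier expansion of $\mathbf{1}_{E\cap I_a}$'' yields the lower bound on $\int_E|S|^{2k}$ is not an argument: a lower bound over the window of width $\theta/q^2$ requires pointwise control of $R$ there, which is exactly what fails when $\deg R\sim q^{L-1}$.

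The difficulty you have correctly identified but not resolved is that the machinery of Sections \ref{sec:Bernstein}--\ref{sec:measconclusion} needs an \emph{idempotent of degree comparable to $q$} whose grid values realize the ratio $1/B(Lp,t)$. The $L$-fold product realizes the ratio but has degree $\sim q^{L-1}$; the projection $\mathbf{\Pi}_q(D_r^L)$ has degree $<q$ and the right grid values but coefficients as large as $Lr^{L-1}$, so it is not an idempotent. The paper bridges this gap by randomization: normalize $\mathbf{\Pi}_q(D_r^L)$ by $M=Lr^{L-1}$ so that its coefficients $\alpha_k$ lie in $[0,1]$, then replace each $\alpha_k$ by an independent Bernoulli variable $X_k$ with $\mathbb{P}(X_k=1)=\alpha_k$. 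Markov's inequality plus the martingale moment bound of Lemma \ref{mart-bern} show that with positive probability the resulting genuine idempotent $R_\omega\in\PP_q$ still satisfies the discrete concentration inequality up to a factor $1-O(\e)$; this gives $\gamma_p^\sharp\geq 1/\inf_t B(Lp,t)$, and Proposition \ref{pto-p-int} then converts discrete concentration into concentration for measurable sets with constant $2\gamma_p^\sharp$. That Bernoulli randomization --- not the dissociation of the scalings, which is only a bookkeeping device --- is the essential idea missing from your proposal.
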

\begin{proof}
Let $C<1/\min_t B(Lp,t)=1/B(Lp,t_0)$, say, and let us chose some
$c:=c(L,p)<t_0$. Then let $q$ be large enough, and $P\in\PP_q$
such that
\begin{equation}\label{eq:Cdef+int2}
\left|P\left(\frac 1{q}\right) \right|^{Lp} >~~ C ~~{
\sum_{k=0}^{q-1} \left|P\left(\frac {k}{q}\right) \right|^{Lp}}.
\end{equation}
Reflecting back to Section \ref{sec:unilower}, we know that $P$
may be taken as some Dirichlet kernel $D_r$, with $r=[t_0q]>cq$.
(This is the only specific property of $D_r$ that we will use.)
Let us take $R:=M^{-1}\mathbf{\Pi}_q(P^L)$, which coincides with
$M^{-1}P^L$ on the grid $\GG$. Choosing $M:=Lr^{L-1}$, which is a
majorant of the Fourier coefficients of $\mathbf{\Pi}_q(P^L)$, the
polynomial $R$ may be written as
$$R=\sum_{k=0}^{q-1}\alpha_k e_k,$$
with all $\alpha_k\in [0,1]$ and $\sum_k \alpha_k=R(0) =r/L$. By
construction, we also have
\begin{equation}\label{eq:Rconcenratestoo}
\left|R\left(\frac 1{q}\right) \right|^{p} >~~ C ~~{
\sum_{k=0}^{q-1} \left|R\left(\frac {k}{q}\right) \right|^{p}}.
\end{equation}

We now define a random idempotent $R_\omega$ by
$$R_\omega =\sum_{k=0}^{q-1}X_k(\omega) e_k,$$
where $X_k$ are independent Bernoulli random variables, with $X_k$
of parameter $\alpha_k$, that is, $\mathbb{P}(X_k=1)=\alpha_k$. We
want to prove that for any $\e>0$ and for $q>q_0(\e)$, with
positive probability the random idempotent $R_\omega$ satisfies
the inequality
\begin{equation}\label{eq:Cdef-rand}
\left|R_\omega\left(\frac 1{q}\right) \right|^{p} >~~K(\e) ~~{
\sum_{k=0}^{q-1} \left|R_\omega\left(\frac {k}{q}\right)
\right|^{p}},
\end{equation}
with $K(\e):=K_p(\e)$ arbitrarily close to $C$ with $\e$
sufficiently small.

Observe that our random idempotents $R_\omega$ are such that
${\mathbb E}(R_\omega(x)) =R(x)$, so in view of
\eqref{eq:Rconcenratestoo}, in order to prove \eqref{eq:Cdef-rand}
we have to measure the error done when replacing $R_\omega$ by its
expectation. Let us center our Bernoulli variables $X_k$ by
considering $\widetilde X_k:=X_k-\alpha_k$. Clearly, $\widetilde
X_k$ has variance ${\mathbb V}(\widetilde X_k)= \alpha_k
(1-\alpha_k)\leq \alpha_k$, so $R_\omega(k/q)$ has expectation
$R(k/q)$ and variance bounded by $r/L$. Also, by assumption,
$|R(1/q)|>C^{1/p}R(0)>\frac {cC^{1/p}}L q$, so after an
application of Markov's Inequality we find
$$
\mathbb{P}\left(\left|\frac{R_\omega(1/q)}{R(1/q)}\right| \leq
1-\e\right)\leq A \e^{-2}q^{-1},
$$ where $A$ depends on $p,c,L$, but is independent of $q$ and
$\e$. Whence for $q$ large enough, the inequality
\begin{equation}\label{minRrand}
\left|\frac{R_\omega(1/q)}{R(1/q)}\right|>1-\e
\end{equation}
holds with probability say at least $2/3$.

Let us now consider the sums
$$
S(\omega):=\sum_{k=0}^{q-1} \left|R_\omega\left(\frac
{k}{q}\right) \right|^{p}\qquad \qquad S:=\sum_{k=0}^{q-1}
\left|R\left(\frac {k}{q}\right) \right|^{p},$$ which we want to
compare. So we also put
$$
\widetilde R_\omega(k/q):=R_\omega(k/q)-R(k/q), \qquad
\widetilde{S}(\omega):= \sum_{k=0}^{q-1} \left|\widetilde
R_\omega(k/q) \right|^p.
$$
We claim that
\begin{equation}\label{martingale}
\mathbb{E}(\widetilde{S}(\omega)) \leq q C_p
\left(1+\sum\alpha_k\right)^{\frac p2} = q C_p\left(1+\frac
rL\right)^{\frac p2}.
\end{equation}

Let us first assume this inequality and conclude the proof of the
proposition. So, using \eqref{martingale},$S\geq R(0)^p=r/L$  and
$\widetilde{S}(\omega)\geq 0$ we are led to
$$
\mathbb{P}\left(C(\e) \widetilde{S}(\omega) \geq \e S \right)\leq
\frac{C(\e)}{\e S} \cdot q C_p \left(1+\frac rL\right)^{\frac
p2}\leq A\e^{-1}q^{1-p/2}.
$$
Therefore the inequality
\begin{equation}\label{majSrand}
C(\e)\widetilde{S}(\omega)
<\e S
\end{equation}
also holds with probability at least $2/3$ for $q$ large enough.

Next we will need the elementary inequality
\begin{equation}\label{eq:elementary}
|a|^p \leq (1+\e) |b|^p + C(\e) |a-b|^p,
\end{equation}
valid for arbitrary $\e>0$ with some corresponding constant
$C(\e)$. This is indeed obvious in case we have $|a|\leq \mu |b|$
with $\mu:=(1+\e)^{1/p}>1$, while otherwise we can write
$|a-b|\geq |a|-|b| \geq |a|( 1-1/\mu))$, therefore $|a|\leq
\mu/(\mu-1) |a-b|$ and we obtain the inequality again. So applying
this inequality with $a=R_\omega(k/q)$ and $b=R(k/q)$ we can
estimate $ |R_\omega(k/q)|^p$ by $
(1+\e)|R(k/q)|^p+C(\e)|\widetilde R_\omega(k/q)|^p$, yielding
$$
S(\omega) \leq (1+\e) S + C(\e)\widetilde{S}(\omega).
$$
Therefore, taking into account \eqref{majSrand},
\eqref{eq:Rconcenratestoo} and \eqref{minRrand}, we find that
\begin{align*}
CS(\omega)&\leq C(1+2\e)S<(1+2\e)|R(1/q)|^p \leq \frac
{1+2\e}{(1-\e)^p} |R_\omega(1/q)|^p
\end{align*}
holds with probability at least 1/3 for $q>q_0=q_0(\e,p,c,L)$.

So we find that \eqref{eq:Cdef-rand} does indeed hold with
$K(\e):=C(1-\e)^p/(1+2\e)$ and for some appropriate idempotent
$R_{\omega}$, once we have \eqref{martingale}, which we prove now.
This is a consequence of the following lemma, which is certainly
classical, but which we give here for the reader's convenience.
\begin{lemma}\label{mart-bern}For $p>1$ there exists some constant $C_p$
with the following property. Let $\alpha_k\in [0,1]$ and $a_k\in
\CC$ be arbitrary for $k=0,1,\dots,N$. Let $X_k$ be a sequence of
independent Bernoulli random variables with parameter $\alpha_k$,
and let $\widetilde X_k:=X_k-\alpha_k$ be their centered version,
again for $k=0,1,\dots,N$. Then we have
$$
\mathbb{E}\left(\left|\sum_{k=0}^Na_k \widetilde
X_k\right|^{2p}\right)\leq C_p\cdot \max_{k=1,\dots,N} |a_k|^{2p}
\cdot (1+\sum_{k=0}^N \alpha_k)^{p}.
$$
\end{lemma}
\begin{proof} We can normalize by taking $\max_{k=1,\dots,N} |a_k|=1$. It follows from classical martingale inequalities (see
\cite{Bur}) that
$$\mathbb{E}\left(\left|\sum_{k=0}^Na_k \widetilde
X_k\right|^{2p}\right)\leq A_p\mathbb{E}\left(\left|\sum_{k=0}^N
\widetilde X_k^2\right|^{p}\right).
$$
So we are left with proving the inequality
\begin{equation}\label{bernoulli}
\mathbb{E}\left(\left|\sum_{k=0}^N \widetilde
X_k^2\right|^{p}\right)\leq A'_p
\left(1+\sum_{k=0}^N\alpha_k\right)^{p}.
\end{equation}

If $0\leq \alpha\leq 1$ and $Y$ is a centered Bernoulli variable
with parameter $\alpha$, then
$$
{\mathbb E} \left(e^{Y^2}\right) =\al e^{(1-\al)^2} + (1-\al)
e^{\al^2} \leq \al (1+e(1-\al)^2)+ (1-\al)(1+e \al^2)
\leq e^{e\al},
$$
because $e^x\leq 1+ex$ for $0\leq x \leq 1$ and $1+
e\al(1-\al)\leq 1+e\al \leq e^{e\al}$. So
\begin{equation}\label{bernoulli2}
\mathbb{E}\left(e^{\sum_{k=0}^N \widetilde X_k^2}\right)\leq e^{e
\sum_{k=0}^N \alpha_k}.
\end{equation}
Finally, we use the fact that, whenever $Z$ is a nonnegative
random variable such that $\mathbb{E}(e^Z)\leq e^\kappa$, then
\begin{align*}\mathbb{E}(Z^p)&=p\int_0^\infty
{\mathbb P}(Z>\lambda)\lambda^{p-1}d\lambda
\leq (2\kappa)^p+p \int_{2\kappa}^\infty
e^{\kappa-\lambda}\lambda^{p-1}d\lambda \\& \leq 2^p \kappa^p+p
\int_0^\infty e^{-\lambda/2}\lambda^{p-1}d\lambda =
2^p\kappa^p+A''_p\leq (2^p+A''_p) (1+\kappa)^p.
\end{align*}
Putting $Z:=\sum_k \widetilde{X}_k^2$ and $\kappa:=e \sum_k\al_k$,
\eqref{bernoulli2} leads to \eqref{bernoulli}.
\end{proof}
So there exists $R_\omega\in \PP_q$ with \eqref{eq:Cdef-rand},
whence $\liminf_{q\to\infty} \gamma_p^\sharp(q)\geq C$, even
$\gamma^\sharp:=\liminf_{q\to\infty} \gamma_p^\sharp(q) \geq
1/\inf_t B(Lp,t)$, and referring to Proposition \ref{pto-p-int}
concludes the proof of Proposition \ref{Lpto-p-int}.
\end{proof}

Note that the result implies $\gamma_4\geq 2/ \inf_t
B(4,t)=0.495\dots$, as computed in \eqref{p=4} at the end of
Section \ref{sec:unilower} for the sake of $c_4$, and similarly
$\gamma_{2k}\geq 0.483\dots$ for general $k>2$ according to the
calculations of \eqref{majB}.

\begin{remark} These results could also have been obtained by
applying the direct estimates of Salem and Zygmund \cite{SZ},
which allow here to have estimates of the maximum value of
$|\widetilde{R}_\omega|$ on the grid $\mathbb{G}_q$. The same
remark holds for the next case, using the grid
$\mathbb{G}_{2Lq}$.\comment{With the above notations these give on
the $N=q$ equally spaced points $x_j:=j/q$ that
$$
\max_{j=1,\dots N} |\widetilde{R}_\omega(x_j)| \ll \sqrt{N}
\sqrt{{\mathbb V}\left(\sum_k X_k\right)} \ll {\sqrt{N \sum_k
\al_k }}\ll \sqrt{q \log q},
$$
with positive probability. The consequent estimates are then
entailed to the effect that $S(\omega)\ll q^{1+p/2+\e}$ for $p>2$.
So in view of $R(1/q)>cq$, clearly we obtain
$|\widetilde{R}_\omega(1/q)|=o(q^{1/2+\e})=o(|R(1/q)|)$, whence
$S(\omega)=o(q^p)=o(|R(1/q)|^p)$ also holds for $p>2$. The same
works even with $N=2Lq$ points, so in the next proof it can also
replace our calculations.}
\end{remark}

The use of the same methods for $p>2$ not an even integer is
somewhat more delicate: nevertheless, we will prove full
$p$-concentration with gap for measurable sets. According to
Proposition \ref{2pto-p}, it would suffice to show
$\gamma_p^{\star}=1/2$ for $p>2$. Essentially, we will do this,
but with some necessary modifications. On the other hand we do
know $c_p^{\star}=1/2$ e.g. from the proof of Proposition
\ref{prop:pcondiconc}: this proof also provides us a concrete
construction, with the product of certain Dirichlet kernels in the
proof, which we will make use in some extent. We start with
\begin{lemma}\label{full-meas}
Let $p>2$. Then for all $C<1/2$, there exists a constant
$K:=K_p(C)$ with the property that for $q$ large there exists an
idempotent $P\in\PP_{2q}$ which satisfies the two inequalities
\begin{align}\label{eq:grid-half-rand}
\left|P\left(\frac 1{2q}\right) \right|^{p} &>~~ C ~~{
\sum_{k=0}^{q-1} \left|P\left(\frac {2k+1}{2q}\right)
\right|^{p}},\\
\left|P\left(\frac 1{2q}\right) \right|^{p} &>~~ K ~~{
\sum_{k=0}^{q-1} \left|P\left(\frac {k}{q}\right) \right|^{p}}
.\label{replacement}
\end{align}
\end{lemma}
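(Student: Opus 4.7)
The plan is to combine the Dirichlet--kernel construction from the proof of Proposition \ref{prop:pcondiconc} with the Bernoulli randomization technique developed in Proposition \ref{Lpto-p-int}. First I would fix $C<1/2$ and appeal to the analysis of Section \ref{sec:pcondiconc}: since $\inf_L \inf_t A(Lp,t)=1$, we can pick an integer $L=L(p,C)$ and a parameter $t=t(p,C)\in (0,1/2)$ with $A(Lp,t)<1/(2C)$. Setting $r:=\lfloor t q\rfloor$, the deterministic estimates of Lemmas \ref{l:majA} and \ref{l:majB}, together with the fact that $D_r(1/(2q))$ and $D_r(1/q)$ differ only by the bounded factor $\cos(\pi t/2)$, yield, for $q$ large enough, both
\begin{equation*}
|D_r(1/(2q))|^{Lp} > C \SQS(|D_r|^{Lp}), \qquad |D_r(1/(2q))|^{Lp} > K_p(C)\, \SQ(|D_r|^{Lp}),
\end{equation*}
where $K_p(C)>0$ depends only on $p$ and $C$ via $B(Lp,t)$.

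Next I would fold $D_r^L$ down to degree below $2q$: set $\tilde R:=\mathbf{\Pi}_{2q}(D_r^L)\in \mathcal{T}_{2q}\cap \PP^+$. Since both grids $\GG$ and $\GS$ sit inside $\mathbb{G}_{2q}$ and $\mathbf{\Pi}_{2q}$ preserves values on $\mathbb{G}_{2q}$, the polynomial $\tilde R$ satisfies the very same two inequalities as $D_r^L$. As all Fourier coefficients of $\tilde R$ are nonnegative, I normalize by $M:=\max_k \widehat{\tilde R}(k)$ and write $R:=M^{-1}\tilde R=\sum_{k=0}^{2q-1}\alpha_k e_k$ with $\alpha_k\in[0,1]$; the two inequalities (being scale-invariant) persist for $R$. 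A direct comparison of orders of magnitude (using $r\sim tq$) shows $R(1/(2q))=\Theta(q)$ and $R(0)=\Theta(q)$, irrespective of whether aliasing occurs in $\mathbf{\Pi}_{2q}$.

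Finally I would randomize to produce an honest idempotent in $\PP_{2q}$. Let $X_k$ be independent Bernoulli random variables with $\mathbb{P}(X_k=1)=\alpha_k$, and define $P_\omega:=\sum_{k=0}^{2q-1}X_k e_k\in \PP_{2q}$, so $\mathbb{E}(P_\omega(x))=R(x)$. Writing $\widetilde P_\omega:=P_\omega-R$, I plan to invoke:
\begin{itemize}
\item Chebyshev's inequality to ensure $|\widetilde P_\omega(1/(2q))|\leq \e\, R(1/(2q))$ with probability $\geq 1 - O(1/(\e^2 q))$;
\item Lemma \ref{mart-bern} to bound $\mathbb{E}|\widetilde P_\omega(\xi)|^{2p}\ll (1+R(0))^p$ at each grid point, whence, via Jensen/Cauchy--Schwarz and Markov, the sums $\sum_k|\widetilde P_\omega((2k+1)/(2q))|^p$ and $\sum_k|\widetilde P_\omega(k/q)|^p$ are each $o(|R(1/(2q))|^p)$ with probability $\to 1$, using $p>2$.
\end{itemize}
By the union bound, all three events hold simultaneously for some $\omega$ once $q$ is sufficiently large. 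Coupling these with inequality \eqref{eq:elementary} then converts the bounds on $\widetilde P_\omega$ into the desired bounds on $|P_\omega|^p$, yielding both concentration inequalities for $P_\omega$ with constants $C(1-\e)^p/(1+2\e)$ and $K_p(C)(1-\e)^p/(1+2\e)$. Since $\e>0$ is arbitrary and $C<1/2$ is our free parameter, we obtain the stated conclusion with $K:=K_p(C)$ (after readjusting $C$ slightly).

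The main obstacle, as in Proposition \ref{Lpto-p-int}, is that the polynomial $R$ has degree $\leq 2q$, forcing Bernoulli sums of length $2q$; the variance bound of Lemma \ref{mart-bern} must therefore be strong enough to beat the $q$ grid points on \emph{both} grids at once. The requirement $p>2$ enters precisely to make the ratio ``expected error sum / main term'' decay as $q^{1-p/2}$, giving room for the union bound over the three control events.
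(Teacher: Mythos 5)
Your proposal follows essentially the same route as the paper's proof: a high power of a Dirichlet kernel $D_r$ with $r\asymp tq$ controlled simultaneously on both grids via the $A(Lp,t)$ and $B(Lp,t)$ estimates and the elementary comparison $|D_r(1/(2q))|\geq |D_r(1/q)|$, followed by projection $\mathbf{\Pi}_{2q}$, normalization of the coefficients into $[0,1]$, and Bernoulli randomization with the moment bound of Lemma \ref{mart-bern} (where $p>2$ gives the decay $q^{1-p/2}$ needed for the union bound). The only cosmetic differences are your use of a general $t$ in place of the paper's $t=1/4$ and of $\max_k\widehat{\tilde R}(k)$ in place of the explicit majorant $Lr^{L-1}$; both are harmless.
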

\begin{proof}
We use now from Section \ref{sec:pcondiconc} that for $L$ large
enough and $q$ large enough there exists an idempotent in $\PP_q$,
which actually can be taken some Dirichlet kernel $D_r$, with say
$r:=[q/4]>cq$ (for some fixed value of $c=c(L,p)<1/4)$, such that
$$
\left|D_r\left(\frac 1{2q}\right) \right|^{Lp} >~~ C ~~{
\sum_{k=0}^{q-1} \left|D_r\left(\frac {2k+1}{2q}\right)
\right|^{Lp}}.
$$
From now on we fix $L$, so that constants may as well depend on
$L$.

Next, we wish to ensure, with some constant $K=K(C,p,L)$, that
\begin{equation}\label{eq:quazigrid}
\left|D_r\left(\frac 1{2q}\right) \right|^{Lp} >~~ K ~~{
\sum_{k=0}^{q-1} \left|D_r\left(\frac {k}{q}\right) \right|^{Lp}}
.
\end{equation}
In view of the concrete form of the Dirichlet kernel, it is
obvious, that $|D_r(1/2q)|\geq |D_r(1/q)|$. Consider now,
recalling the estimation of the concentration constants
$c^\sharp_p(q)\to c^\sharp_p$ in Section \ref{sec:unilower}, and
in particular reflecting back to \eqref{prod-idem} --
\eqref{eq:Batzero}, the lower estimates
$$
\left|D_r\left(\frac 1q\right) \right|^{Lp} >~~
\frac{1}{B(Lp,[q/4],q)} ~~{ \sum_{k=0}^{q-1} \left|D_r\left(\frac
{k}{q}\right) \right|^{Lp}}.
$$
As $B(Lp,[q/4],q) \to B(Lp,1/4)>0$ ($q\to\infty$), this clearly
implies \eqref{eq:quazigrid}.

At this point we proceed as above. First we consider the
$L^{\rm{th}}$ power of $D_r$ and take for $P$ the projected
polynomial $M^{-1}\mathbf{\Pi}_{2q} (D_r^L)$, with $M:=Lr^{L-1}$ a
majorant of the Fourier coefficients of $D_r^L$. The polynomial
$P$ may be written as
$$P=\sum_{k=0}^{2q-1}\alpha_k e_k,$$
with all $\alpha_k\in [0,1]$ and $\sum \alpha_k=P(0)=r/L ~~~~(
\simeq c(L) q)$. So we have
\begin{equation}\label{eq:PCcondition}
\left|P\left(\frac 1{2q}\right) \right|^{p} >~~ C ~~{
\sum_{k=0}^{q-1} \left|P\left(\frac {2k+1}{2q}\right)
\right|^{p}}.
\end{equation}
Moreover, by construction we also have the grid condition
\begin{equation}\label{eq:Pgridtoo}
\left|P\left(\frac 1{2q}\right) \right|^{p} >~~ K ~~{
\sum_{k=0}^{q-1} \left|P\left(\frac {k}{q}\right) \right|^{p}}
\end{equation}
with a certain constant $K=K(C,p,L)$.

Observe that the only required property what $P$ does not have is
being an idempotent: here $P\in{\mathcal T}_{2q} \cap \PP^{+}$,
while we need some polynomial in $\PP_{2q}$. So we define, as
before, a random idempotent $P_\omega$ by
$$
P_\omega :=\sum_{k=0}^{2q-1}X_k(\omega) e_k,
$$
where $X_k$ are independent Bernoulli random variables, with $X_k$
of parameter $\alpha_k$, that is, $\mathbb{P}(X_k=1)=\al_k$. Then
again $P(x)=\mathbb{E}P_\omega(x)$, and we measure the error done
when replacing $P_\omega$ by its expectation.

Let us write $X_k=\alpha_k+\widetilde X_k$, where $\widetilde X_k$
is centered and has variance $\alpha_k (1-\alpha_k)\leq \alpha_k$.
So $P_\omega(k/(2q))$ has  expectation $P(k/(2q))$ and  variance
bounded by $r/L$.

By construction $|P(1/(2q))|>K^{1/p}P(0)>\frac {cK^{1/p}}L q$. So,
by Markov Inequality, as before, we find that for $q$ large
enough, the inequalities
\begin{equation}\label{minPrand2}
\left|\frac{P_\omega(1/(2q))}{P(1/(2q))}\right|>1-\e, \qquad
\left|\frac{P_\omega(1/q)}{P(1/q)}\right|>1-\e
\end{equation}
hold with probability $2/3$.

Denoting again $\widetilde P_\omega(x):=P_\omega(x)-P(x)$, let us
now consider the sums
\begin{align*}
&S(\omega):=\sum_{k=0}^{q-1} \left|P_\omega\left(\frac
{2k+1}{2q}\right) \right|^{p}, \qquad S'(\omega):=\sum_{k=0}^{q-1}
\left|P_\omega\left(\frac {k}{q}\right) \right|^{p}, \\ &
S:=\sum_{k=0}^{q-1} \left|P\left(\frac {2k+1}{2q}\right)
\right|^{p}, \qquad \qquad S':=\sum_{k=0}^{q-1} \left|P\left(\frac
{k}{q}\right) \right|^{p},\\& \widetilde{S}(\omega):=
\sum_{k=0}^{q-1} \left|\widetilde P_\omega \left(\frac
{2k+1}{2q}\right)\right|^p,\qquad
\widetilde{S}'(\omega):=\sum_{k=0}^{q-1}
\left|\widetilde{P}_\omega\left(\frac {k}{q}\right) \right|^{p} .
\end{align*}
To compare these again we use the elementary inequality
\eqref{eq:elementary} to get $|P_\omega(k/(2q))|^p\leq
(1+\e)|P(k/(2q))|^p+C(\e)|\widetilde P_\omega(k/(2q))|^p$ and thus
$$
S(\omega) \leq (1+\e) S + C(\e)\widetilde{S}(\omega), \qquad
S'(\omega) \leq (1+\e) S' + C(\e)\widetilde{S}'(\omega).
$$
Applying Lemma \ref{mart-bern} as before, analogously to
\eqref{martingale} we now obtain
$$
\mathbb{E}|\widetilde P_\omega(k/(2q))|^p\leq q C_p
\left(1+\sum\alpha_k\right)^{\frac p2}\leq c'(p,L) q^{1-p/2}.
$$
So for $q$ large enough, similarly to \eqref{majSrand}, we prove
as before that the inequalities
$$ C(\e) \widetilde S (\omega) <\e S,  \qquad C(\e)
\widetilde S' (\omega) <\e S'
$$
hold with probability $2/3$, thus combining with the above, we
even have
$$
S(\omega) < (1+2 \e) S , \qquad S'(\omega) < (1+2\e) S'
$$
with probability at least 1/3. Taking into account also
\eqref{eq:PCcondition}, \eqref{eq:Pgridtoo} and \eqref{minPrand2},
we can summarize our estimates so that with positive probability
\begin{align*}
CS(\omega) & < \frac {1+2\e}{(1-\e)^p}
\left|P_\omega\left(\frac {1}{2q}\right)\right|^p,\\
KS'(\omega) & < \frac {1+2\e}{(1-\e)^p} \left|P_\omega\left(\frac
{1}{2q}\right)\right|^p.
\end{align*}
Since $\e$ is arbitrary, we conclude that some $P_\omega\in
\PP_{2q}$ satisfies the requirements of the Lemma.
\end{proof}

At this point, we have all the elements to have the best constant
for all $p>1$ not even.

\begin{proposition}\label{full-rang}
Let $p>1$ not an even integer. Then there is full
$p$-concentration with gap for measurable sets.
\end{proposition}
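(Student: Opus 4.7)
The plan is to combine Proposition \ref{2pto-p} with the randomized construction of Lemma \ref{full-meas}. Since $p>1$ not an even integer gives $2p>2$, we may apply Lemma \ref{full-meas} with its exponent set equal to $2p$. This yields, for every $C<1/2$ and every sufficiently large $q$, an idempotent $P\in\PP_{2q}$ satisfying both the concentration at $1/(2q)$ on $\GS$,
$$
\left|P\left(\tfrac{1}{2q}\right)\right|^{2p} \;>\; C \sum_{k=0}^{q-1}\left|P\left(\tfrac{2k+1}{2q}\right)\right|^{2p},
$$
and the auxiliary grid-type bound
$$
\left|P\left(\tfrac{1}{2q}\right)\right|^{2p} \;>\; K \sum_{k=0}^{q-1}\left|P\left(\tfrac{k}{q}\right)\right|^{2p}.
$$

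Next I would run the argument of Proposition \ref{2pto-p} essentially verbatim with this $P$ as input. Given a symmetric measurable set $E\subset\TT$ of positive measure and $\varepsilon>0$, invoke Proposition \ref{grid-half} to produce $q$ large (and, for the gap statement, coprime to a prescribed odd $\nu$) and $a$ with $(2a+1,2q)=1$ such that the $\theta/q^2$-neighborhood of $(2a+1)/(2q)$ meets $E$ up to proportion $\eta$. Use Lemma \ref{l:homominvqq} together with Remark \ref{degree-half} to pass to $Q:=\mathbf{\Pi}_{2q}(P(b\,\cdot))\in\PP_{2q}$ for the appropriate $b$; since $Q$ coincides with $P(b\,\cdot)$ on the entire grid $\mathbb{G}_{2q}$, it inherits both the $\GS$-concentration at $(2a+1)/(2q)$ and the grid condition \eqref{complementQ} with exponent $2p$. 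Now define the idempotent $R(x):=Q(x)\,Q((2q+1)x)$, so Lemma \ref{error} supplies the Bernstein-type estimates \eqref{un}--\eqref{deux}. Finally, form $S(x):=T(qx)\,R(x)$, where $T$ is a gap-peaking idempotent at $1/2$ furnished by the second case of Proposition \ref{p:peak-meas}, taken with gap larger than $\deg R$. The chain of integral estimates then proceeds word-for-word as in Proposition \ref{2pto-p}, giving $2C\int_{\TT}|S|^p \le \kappa(\varepsilon)\int_E |S|^p$; letting $\varepsilon\to 0$ and $C\nearrow 1/2$ delivers $\gamma_p=1$.

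The gap property is obtained exactly as at the end of the proof of Proposition \ref{2pto-p}: fix any large odd $\nu$ in advance, choose $q$ with $(\nu,q)=1$, and replace $Q$ by $\tilde Q(\nu\,\cdot)$ where $\tilde Q=\mathbf{\Pi}_{2q}(P(b\,\cdot))$ is constructed with $b$ satisfying $\nu b(2a+1)\equiv 1\pmod{2q}$ (such $b$ exists because $\nu(2a+1)$ and $2q$ are coprime). The factor $\nu$ merely rescales the threshold $\theta$, while the grid values and hence the grid condition are unchanged.

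The principal obstacle, and the sole reason Lemma \ref{full-meas} had to provide the extra grid bound \eqref{replacement}, is that the randomly constructed $P$ lies in $\PP_{2q}$ rather than in $\PP_q$. In Proposition \ref{2pto-p} the analogous grid condition with exponent $2p$ came for free from Remark \ref{exact-q} (i.e.\ from Marcinkiewicz--Zygmund applied to a polynomial of degree below $q$); here that is no longer available, and the inequality \eqref{replacement} built into Lemma \ref{full-meas} is precisely what replaces it. Once this is in place, the adaptation of the proof of Proposition \ref{2pto-p} is purely mechanical.
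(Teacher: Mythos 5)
Your proposal is correct and follows essentially the same route as the paper: start from the idempotent $P\in\PP_{2q}$ supplied by Lemma \ref{full-meas} with exponent $2p>2$, observe that the only place the proof of Proposition \ref{2pto-p} used $P\in\PP_q$ was to secure the grid condition \eqref{complementQ} (there via Remark \ref{exact-q}), and note that \eqref{replacement} now plays that role, after which everything, including the gap argument, runs verbatim. The paper's own proof is exactly this observation, stated more briefly.
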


\begin{proof} The proof follows the same lines as the proof of
Proposition \ref{2pto-p}. We take now $C<1/2$ and, instead of
choosing $P\in\PP_q$ satisfying \eqref{eq:Cdef} and starting the
consruction of $Q$ with that, we start with choosing
$P\in\PP_{2q}$ given by Lemma \ref{full-meas}, with exponent
$2p>2$.

Note that the only point of the proof of Proposition \ref{2pto-p}
using the fact that $P$ is in $\PP_q$ is the grid condition
\eqref{complementQ}, which is given now by \eqref{replacement}.
Thus Lemma \ref{error} applies even in this case, while otherwise
the proof is exactly as for Proposition \ref{2pto-p}.
\end{proof}


\begin{thebibliography}{99}

\bibitem{first} {\sc  B. Anderson, J. M. Ash, R. L. Jones, D. G. Rider, B. Saffari},
In\'egalit\'es sur des sommes d'exponentielles, \emph{C. R. Acad.
Sci. Paris Ser. I. Math}, {\bf 296} (1983), 899--902.

\bibitem{CRMany} {\sc  B. Anderson, J. M. Ash, R. L. Jones, D. G. Rider, B. Saffari},
$L^p$-norm local estimates for exponential sums, \emph{C. R. Acad.
Sci. Paris Ser. I. Math}, {\bf 330} (2000), 765--769.

\bibitem{Many} {\sc  B. Anderson, J. M. Ash, R. L. Jones, D. G. Rider, B. Saffari},
\emph{Ann. Inst. Fourier},  {\bf  57} (2007),  1377--1404.

\bibitem{Ash0} {\sc J. M. Ash}, Weak restricted and very
restricted operators on $L^2$, \emph{Trans. Amer. Math. Soc.},
{\bf 281} (1984), 675--689.

\bibitem{Ash} {\sc J. M. Ash}, On concentrating idempotents, a survey,
\emph{Topics in Classical Analysis and Applications in Honor of
Daniel Waterman}, L. De Carli, K. Kazarian, and M. Milman editors,
World Scientific, 2008, 31-44.

\bibitem{CRSome} {\sc J. M. Ash, R. L. Jones, B. Saffari},
In\'egalit\'es sur des sommes d'exponentielles, \emph{C. R: Acad.
Sci. Paris Ser. I. Math}, {\bf 296} (1983), 273--285.

\bibitem{Bac} {\sc G. F. Bachelis},  On the upper and lower majorant
properties in $L^p(G)$, \emph{Quart. J. Math. Oxford (2)}, {\bf
24} (1973), 119--128.

\bibitem{Boas} {\sc R. P. Boas},  Majorant problems for Fourier series,
\emph{J. d'Analyse Math.}, {\bf 10} (1962-3), 253--271.

\bibitem{CRAS} {\sc Bonami, A., R\'ev\'esz, Sz. Gy.}, Failure of
Wiener's property for positive definite periodic funcztions ,
\emph{C. R. Acad. Sci. Paris Ser I} {\bf 346} (2008), 39--44.

\bibitem{L1} {\sc Bonami, A., R\'ev\'esz, Sz. Gy.}, Concentration
of the absolute value integral of idempotents, \emph{manuscript}.


\bibitem{Bur} {\sc D. L: Burkholder}, Martingale transforms,
\emph{Ann. Math. Stat.}, {\bf 37} (1966), 1494--1504.

\bibitem{Cow} {\sc Cowling, M.}, Some applications of Grothendieck's
theory of topological tensor products in harmonic analysis,
\emph{Math. Ann.}, {\bf 232} (1978), 273--285.

\bibitem{DPQ} {\sc M. D\'echamps-Gondim, F. Lust-Piquard, H. Queff{\'e}lec},
Estimations locales de sommes d'exponentielles, \emph{C. R: Acad.
Sci. Paris Ser. I. Math}, {\bf 297} (1983), 153--157.

\bibitem{DPQ2} {\sc M. D\'echamps-Gondim, F. Lust-Piquard, H.
Queff{\'e}lec}, Estimations locales de sommes d'exponentielles,
\emph{Publ. Math. Orsay} 84-01, No.1 (1984), 1--16.

\bibitem{ER}  {\sc  Erd\H os, P., R\'enyi, A.}, On a problem of A. Zygmund,
\emph{Studies in mathematical analysis and related topics},
Stanford University Press, Stanford, California, 1962, 110--116.


\bibitem{Fou} {\sc  J. J. F. Fournier},  Majorants and $L^p$ norms,
\emph{Israel J. Math.}, {\bf 18} (1974), 157--166.

\bibitem{GR} {\sc  Green, B., Ruzsa, I.},  On the Hardy-Littlewood majorant
problem, \emph{Math. Proc. Cambridge Philos. Soc.}, {\bf 137}
(2004), no. 3, 511--517.

\bibitem{HL} {\sc  G. H. Hardy \& J. E. Littlewood},  Notes on the theory
of series (XIX): a problem concerning majorants of Fourier series,
\emph{Quart. J. Math. Oxford}, {\bf 6} (1935), 304--315.

\bibitem{Ing} {\sc  A. E. Ingham}, Some trigonometrical inequalities with
applications to the theory of series, \emph{Math. Zeitschrift},
{\bf 41} (1936), 367-379.

\bibitem{Khi} {\sc Khintchine}, Einige S\"atze \"uber Kettenbr\"uche, mit
Anwendungen auf die Theorie der Diophantischen Approximationen,
\emph{Math. Ann.}, {\bf 92} (1924), 115--125.

\bibitem{Kom} {\sc Komornik, V., Loreti, P.}, \emph{Fourier series
in control theory}, Springer Monographs in Mathematics.
Springer-Verlag, New York, 2005.

\bibitem{Moc} {\sc Mockenhaupt, G.}, \emph{Bounds in Lebesgue
spaces of oscillatory integral operators}. Thesis for habilitaton.
Siegen: Univ.-GSH Siegen, Fachbereich Mathemtik, (1996), 52 pages.

\bibitem{MS} {\sc Mockenhaupt, G., Schlag, W.}, On the Hardy-Littlewood majorant
problem for random sets, arXive:math.CA/0207226v1, for a new version see \\
http://www-math-analysis.ku-eichstaett.de/~gerdm/wilhelm/maj.pdf


\bibitem{Mon} {\sc H. L. Montgomery}, \emph{Ten lectures on the interface of number
theory and Fourier analysis}, Regional Conference Series in
Mathematics 84,  American Mathematical Society, Providence, 1994.

\bibitem{OS} {\sc J. Ortega-Cerd\`a, J. Saludes}, Marcinkiewicz-Zygmund
inequalities, \emph{Journal of Approximation Theory} {\bf 145}
(2), 237--252 (2007).

\bibitem{SZ} {\sc R. Salem, A. Zygmund}, Trigonometric series
whose terms have random signs, \emph{Annals of Math.}, {\bf 91}
(1954), 245--301.

\bibitem{Sch} {\sc W. M. Schmidt}, Metrical theorems on fractional
parts of sequences, \emph{Trans. Amer. Math. Soc.} {\bf 110}
(1964), 493--518.

\bibitem{Sz} {\sc P. Sz\"usz}, \"Uber metrische Theorie der
Diophantischen Approximation, \emph{Acta Math. Hung.} {\bf IX}
(1958), no. 1-2, 177--193.

\bibitem{Tao} {\sc T. Tao}, Integral norms concentration under gap condition,
\emph{e-mail letter to Szil\'ard R\'ev\'esz}, June 17, 2006.

\bibitem{TT} {\sc Tenenbaum, G., Tucsnak, M.}, Fast and
strongly localized observation for the Schr\"odinger equation,
\emph{Trans.  Amer. Math. Soc.},  to appear.

\bibitem{TT2} {\sc Tenenbaum, G., Tucsnak, M.}, New blow-up rates
for fast controls of Schr\"{o}dinger and heat equations.  J.
Differential Equations  243  (2007),  70--100.


\bibitem{T} {\sc Tur\'an, P.}, On a Certain Problem in the Theory
of Power Series with Gaps, \emph{Studies in mathematical analysis
and related topics}, Stanford University Press, Stanford,
California, 1962, 404--409.


\bibitem{W} {\sc Wiener, N.}, A class of gap theorems,
\emph{Ann. Scuola Norm. Sup. Pisa} (2) {\bf 3} (1934), 367--372.

\bibitem{Z} {\sc  A. Zygmund},  Trigonometric Series, \emph{Second edition},
{\bf I-II}, Cambridge University Press, Cambridge, 1959.


\end{thebibliography}
\end{document}